%
%
%
%
%
\documentclass[11pt]{article}
\usepackage[top=1in, right=1.4in, left=1.4in, bottom=1in]{geometry}
\usepackage{inputenc}
\usepackage{amsmath,amsfonts,amssymb,amsthm}
\usepackage{mathtools,mathrsfs}
\usepackage{graphicx}
\usepackage{color}
\usepackage{caption}
\usepackage{enumerate}

\usepackage{tikz}
\usetikzlibrary{calc,decorations.pathreplacing,shapes,matrix,arrows,decorations.markings}

\tikzset{draw half paths/.style 2 args={%
  decoration={show path construction,
    lineto code={
      \draw [#1] (\tikzinputsegmentfirst) -- 
         ($(\tikzinputsegmentfirst)!0.5!(\tikzinputsegmentlast)$);
      \draw [#2] ($(\tikzinputsegmentfirst)!0.5!(\tikzinputsegmentlast)$)
        -- (\tikzinputsegmentlast);
    }
  }, decorate
}}

\tikzset{middlearrow/.style={
        decoration={markings,
            mark= at position 0.5 with {\arrow{#1}} ,
        },
        postaction={decorate}
    }
}

\usepackage{xcolor}
\usepackage{hyperref}
\hypersetup{colorlinks = false, linkbordercolor = {white}}
\usepackage{aliascnt}

\newtheorem{theorem}{Theorem}[section]
\newaliascnt{lemma}{theorem}
\newtheorem{lemma}[lemma]{Lemma} 
\aliascntresetthe{lemma}
\newaliascnt{proposition}{theorem}
\newtheorem{proposition}[proposition]{Proposition}   
\aliascntresetthe{proposition}
\newaliascnt{corollary}{theorem}
\newtheorem{corollary}[corollary]{Corollary}
\aliascntresetthe{corollary}
\newaliascnt{question}{theorem}
\newtheorem{question}[question]{Question}
\aliascntresetthe{question}
\newaliascnt{conjecture}{theorem}

\aliascntresetthe{conjecture}

\theoremstyle{definition}

\newtheorem*{claim1}{Claim 1}
\newtheorem*{claim2}{Claim 2}

\newtheorem*{case1}{Case 1}
\newtheorem*{case2}{Case 2}

\newaliascnt{example}{theorem}
\newtheorem{example}[example]{Example}
\aliascntresetthe{example}

\newaliascnt{definition}{theorem}
\newtheorem{definition}[definition]{Definition}
\aliascntresetthe{definition}

\newtheorem*{acknowledgements}{Acknowledgements}

\theoremstyle{remark}
\newtheorem*{remark}{Remark}
\newtheorem*{remarks}{Remarks}

\newcommand{\mc}[1]{\mathcal{#1}}
\newcommand{\mb}[1]{\mathbb{#1}}
\newcommand{\ms}[1]{\mathscr{#1}}
\newcommand{\G}{\Gamma}
\newcommand{\tr}{(\triangle)}
\newcommand{\tf}{(\therefore)}
\newcommand{\dne}{\hfill $\Box$ \vspace{0.3cm}}
\newcommand{\larr}[1]{\xleftarrow{#1}}
\newcommand{\rarr}[1]{\xrightarrow{#1}}
\newcommand{\ang}[2]{\arg(#1,#2)}
\renewcommand{\emptyset}{\varnothing}

\title{Permutation monoids and MB-homogeneity for graphs and relational structures}
\author{ Thomas D. H. Coleman\footnotemark[1] , David M. Evans\footnotemark[2] , Robert D. Gray\footnotemark[3] $^\text{,}$\footnotemark[4]}

\begin{document}

\maketitle

\footnotetext[1]{School of Mathematics and Statistics, University of St Andrews, St Andrews, KY16 9SS, United Kingdom. Email: \texttt{tdhc@st-andrews.ac.uk}.}
\footnotetext[2]{Department of Mathematics, Imperial College London, South Kensington Campus, London, SW7 2AZ, United Kingdom. Email: \texttt{david.evans@imperial.ac.uk}.}
\footnotetext[3]{School of Mathematics, University of East Anglia, Norwich, NR4 7TJ, United Kingdom. Email: \texttt{robert.d.gray@uea.ac.uk}.}
\footnotetext[4]{This work was supported by the EPSRC
grant EP/N033353/1 `Special inverse
monoids: subgroups, structure, geometry,
rewriting systems and the word problem'.}

\begin{abstract}
In this paper we investigate the connection between infinite permutation monoids and bimorphism monoids of first-order structures. Taking our lead from the study of automorphism groups of structures as infinite permutation groups and the more recent developments in the field of homomorphism-homogeneous structures, we establish a series of results that underline this connection. Of particular interest is the idea of \emph{MB-homogeneity}; a relational structure $\mc{M}$ is MB-homogeneous if every monomorphism between finite substructures of $\mc{M}$ extends to a bimorphism of $\mc{M}$.

The results in question include a characterisation of closed permutation monoids, a Fra\"{i}ss\'{e}-like theorem for MB-homogeneous structures, and the construction of $2^{\aleph_0}$ pairwise non-isomorphic countable MB-homogeneous graphs. We prove that any finite group arises as the automorphism group of some MB-homogeneous graph and use this to construct oligomorphic permutation monoids with any given finite group of units. We also consider MB-homogeneity for various well-known examples of homogeneous structures and in particular give a complete classification of countable homogeneous undirected graphs that are also MB-homogeneous.\\

\emph{Keywords:} bimorphisms, MB-homogeneous, cancellative monoids, permutation monoids, oligomorphic transformation monoids, homomorphism-homogeneous structures, infinite graph theory. \\
\emph{2010 Mathematics Subject Classification:} 20M99, 03C15, 05C63

\end{abstract}

Let $\mc{M}$ be a first-order structure with automorphism group Aut$(\mc{M})$. As ``structure is whatever is preserved by automorphisms" \cite{hodges1993model}, the automorphism group is a key concept in understanding the model theory of $\mc{M}$. Every automorphism of a countably infinite first-order structure $\mc{M}$ is a permutation of the domain $M$ of $\mc{M}$; in this case, we can view Aut$(\mc{M})$ as an infinite permutation group. Much of the existing literature in this field explores connections between infinite permutation group theory and model theory; see \cite{infpermgroups1998} and \cite{oligomorphic1990} for instance. More recent work has studied the endomorphism monoid End$(\mc{M})$ of a first-order structure $\mc{M}$; analogously, these are examples of infinite transformation monoids. By imposing additional conditions on the type of endomorphism of $\mc{M}$, we can obtain various submonoids of End$(\mc{M})$. These various natural monoids of transformations associated with a first-order structure $\mc{M}$ have been studied by Lockett and Truss \cite{lockettgeneric, lockett2014some}.

The principal aim of papers by Cameron and Ne\v{s}et\v{r}il \cite{cameron2006homomorphism}, Ma\v{s}ulovi\'{c} \cite{mavsulovic2007homomorphism}, \cite{masulovic2013}, Ma\v{s}ulovi\'{c} and Pech \cite{masulovic2011oligomorphic}, Lockett and Truss \cite{lockettgeneric, lockett2014some}, and 
Hartman, Hubi\v cka and Ma\v sulovi\'c \cite{Hartman2014} focus on generalising the current theory on infinite permutation groups to the case of infinite transformation monoids, particularly in the case of End$(\mc{M})$. Research on End$(\mc{M})$ is not restricted to finding analogues of results about Aut$(\mc{M})$; understanding End$(\mc{M})$ is key to the study of the \emph{polymorphism clone} Pol$(\mc{M})$ of $\mc{M}$ and hence complexity of constraint satisfaction problems \cite{bodirsky2006constraint}. This connection provides motivation for studying endomorphism monoids of first-order structures, and has been extensively studied by Bodirsky \cite{bodirsky2005core, bodirsky2012complexity}, Bodirsky and Ne\v{s}et\v{r}il \cite{bodirsky2006constraint} and Bodirsky and Pinsker \cite{bodirsky2015schaefer}.

A \emph{bimorphism} $\alpha:\mc{M} \to \mc{N}$ is a bijective homomorphism between two first-order structures $\mc{M}$ and $\mc{N}$. If $\alpha$ is a bijective endomorphism of $\mc{M}$, we say that it is a \emph{bimorphism of $\mc{M}$}. The collection of all bijective endomorphisms of a first-order structure $\mc{M}$ with domain $M$ forms a monoid under the composition operation. We call this the \emph{bimorphism monoid} of the structure $\mc{M}$ and denote it by Bi$(\mc{M})$. Of course, every automorphism of $\mc{M}$ is a bijective homomorphism, but in general the converse does not hold. Hence Bi$(\mc{M})$ contains the automorphism group Aut$(\mc{M})$ (as its group of units), but is also contained in the symmetric group Sym$(M)$ since every element of Bi$(\mc{M})$ is a bijection from $M$ to itself. Therefore, the bimorphism monoid Bi$(\mc{M})$ gives a natural example of a \emph{permutation monoid}; a monoid where each element is a permutation. In fact, as we shall see for any countable set $M$, the closed \emph{submonoids} of the symmetric group Sym$(M)$ under the pointwise convergence topology are precisely the bimorphism monoids of first-order structures $\mc{M}$ on the domain $M$  (\autoref{closed}).

Although it is a natural concept, permutation monoids have not received much attention in the literature. By definition, every permutation monoid is a \emph{group-embeddable monoid}. The study of group-embeddable monoids was a principal interest of early semigroup theorists; a well known result (Ore's Theorem, see \cite{meakin2007groups}) says that a monoid $T$ is embeddable in a group if it is cancellative and satisfies Ore's condition (for $a,b\in T$, then $aT \cap bT\neq \emptyset$). Furthermore, a monoid $T$ is group-embeddable if and only if $M$ has a faithful representation as a monoid of permutations. Equivalently, $M$ is group-embeddable if and only if $M$ is isomorphic to a submonoid of some symmetric group, that is, a permutation monoid. 

A result of Reyes \cite{reyes1970local} (see \cite{oligomorphic1990}) states that a subgroup of the infinite symmetric group is closed if and only if it is the automorphism group of some first-order structure. Examples of highly symmetric infinite permutation groups are abundant in the literature, often arising as automorphism groups of structures with ``nice" structural conditions; specifically, $\aleph_0$-categoricity and homogeneity. A structure $\mc{M}$ is \emph{$\aleph_0$-categorical} if and only if $\mc{M}$ is the unique (up to isomorphism) countable model of the theory Th$(\mc{M})$ of $\mc{M}$, and it is \emph{homogeneous} if every isomorphism between finite substructures of $\mc{M}$ extends to an automorphism of $\mc{M}$. The two are related; every homogeneous structure $\mc{M}$ over a finite relational language is $\aleph_0$-categorical, and an $\aleph_0$-categorical structure $\mc{M}$ is homogeneous if and only if Th$(\mc{M})$ has quantifier elimination \cite{macpherson2011survey}. Finding examples of $\aleph_0$-categorical structures provides corresponding examples of interesting permutation groups. It is a famous theorem of Engeler, Ryll-Nardzewski and Svenonius (see \cite{hodges1993model}) that a structure $\mc{M}$ is $\aleph_0$-categorical if and only if Aut$(\mc{M})$ has finitely many orbits on $M^n$ for every $n\in\mb{N}$; such groups are called \emph{oligomorphic permutation groups}. It follows that a source of oligomorphic permutation groups are automorphism groups of homogeneous structures over a finite relational language. The celebrated theorem of Fra\"{i}ss\'{e} \cite{fraisse1953certaines} gives a characterisation of homogeneous structures, which is used to construct many examples of structures with an oligomorphic automorphism group. Such structures include the countable dense linear order without endpoints $(\mb{Q},<)$, the random graph $R$, and the generic poset $P$. This was followed in time by complete classification results for countable homogeneous structures for posets by Schmerl \cite{schmerl1979countable}, undirected graphs by Lachlan and Woodrow \cite{lachlan1980countable} and directed graphs by Cherlin \cite{cherlin1998classification}.

In the study of infinite permutation monoids and bimorphism monoids of first-order structures, the natural analogue of homogeneity is \emph{MB-homogeneity}. A structure $\mc{M}$ is MB-homogeneous if every finite partial monomorphism of $\mc{M}$ extends to a bimorphism of $\mc{M}$. This notion of homogeneity was first introduced by Lockett and Truss in \cite{lockettgeneric}, in which they determined conditions using MB-homogeneity for the existence of generic bimorphisms of a structure. Further results were shown by the same authors \cite{lockett2014some} where they classified homomorphism-homogeneous (and hence MB-homogeneous) posets in a wide-ranging result. MB-homogeneity is a natural extension of \emph{MM-homogeneity}; where every finite partial monomorphism of $\mc{M}$ extends to a monomorphism of $\mc{M}$. MM-homogeneity has been more widely considered; for instance, Cameron and Ne\v{s}et\v{r}il \cite{cameron2006homomorphism} demonstrated a Fra\"{i}ss\'{e}-theoretic result about construction and uniqueness of MM-homogeneous structures. 

The aim of this paper is to develop the theory of infinite permutation monoids, with a particular focus on bimorphism monoids and MB-homogeneous structures. We begin in \autoref{s2} by recalling the definition of an oligomorphic transformation monoid from \cite{masulovic2011oligomorphic}, and extend results of the same paper by considering other self-map monoids as introduced by Lockett and Truss \cite{lockett2014some}. \autoref{s3} investigates permutation monoids and MB-homogeneity in more detail, including a characterisation of closed permutation monoids (\autoref{closed}) and a Fra\"{i}ss\'{e}-like theorem for MB-homogeneity (Propositions \ref{mapbap}, \ref{mbfraisse}, and \ref{biequiv}). \autoref{s5} is devoted to MB-homogeneous graphs, including establishing some useful properties of MB-homogeneous graphs, introducing the notion of \emph{bimorphism equivalence} (\autoref{defbieq}) and demonstrating that there exist $2^{\aleph_0}$ non-isomorphic MB-homogeneous graphs, each of which is bimorphism equivalent to the random graph $R$ (\autoref{uncmany}). Furthermore, we show that for any finite group $H$ there exists a MB-homogeneous graph $\G$ such that Aut$(\G) \cong H$ (\autoref{mbfrucht}); consequently there exists an oligomorphic permutation monoid with $H$ as its group of units.
Finally, \autoref{s4} assesses a selection of previously known homogeneous structures to determine whether or not they are MB-homogeneous, culminating in a complete classification of countably infinite graphs that are both homogeneous and MB-homogeneous (\autoref{classification}).

The scope of this article is wide, combining ideas from many different areas of mathematics. For background on the theory of semigroups and monoids, we refer the reader to \cite{howie1995fundamentals}. For definitions, notations and conventions in model theory, including the basics of relational structures, see \cite{hodges1993model}. A good introductory text for graph theory is \cite{diestel2000graph}. For more background on infinite permutation groups, see \cite{infpermgroups1998}; their deep connection to automorphism groups of first-order structures is outlined in Chapter 2 of \cite{oligomorphic1990}. Throughout, maps act on the right of their arguments, and we compose maps from left to right. A \emph{relational first-order signature} $\sigma$ consists of a collection of relations $\{\bar{R}_i$ : $i\in I\}$ where each $R_i$ has an arity $n_i\in \mb{N}$ for all $i\in I$. A \emph{$\sigma$-structure} $\mc{M}$ consists of a domain $M$ and subsets $R_i\subseteq M^{n_i}$ interpreting $\bar{R}_i$ in $\sigma$ for each $i\in I$. We follow usual convention for notation regarding relations; for a $\sigma$-structure $\mc{M}$, we write $R_i^{\mc{M}}(\bar{x})$ and say that it holds if $\bar{x}\in R_i\subseteq M^{n_i}$. All structures $\mc{M}$ will be countably infinite unless stated otherwise.

\section{Oligomorphic transformation monoids}\label{s2}

Let $X$ be a countably infinite set and let $T$ be an infinite submonoid of Self$(X)$, the full transformation monoid on $X$. Then $T$ is a transformation monoid with a natural action on the set $X$ via $(x,\alpha)\mapsto x\alpha$ for all $x\in X$ and $\alpha\in T$. We can extend this to an action of $T$ on tuples $\bar{x}\in X^n$ for all $n\in\mb{N}$, with $T$ acting componentwise on $\bar{x}$. We begin by outlining some important definitions regarding the notion of orbits on tuples for a transformation monoid; versions of these appear in \cite{steinberg2010theory}.

\begin{definition} Let $T\subseteq$ Self$(X)$ be a transformation monoid acting on tuples $X^n$ as above, and let $U$ be the group of units of $T$.
\begin{itemize}
\item Define the \emph{forward orbit} of a tuple $\bar{x}$ to be the set \[F(\bar{x}) = \{\bar{y} \in X^n\; : \;(\exists s\in T)(\bar{x}s = \bar{y})\}.\] 
\item Define the \emph{strong orbit} of $\bar{x}$ to be the set \[S(\bar{x}) = \{\bar{y}\in X^n\; : \;(\exists s,t\in T)(\bar{x}s = \bar{y}\text{ and }\bar{y}t = \bar{x})\}\] 
\item Define the \emph{group orbit} of $\bar{x}$ to be the set \[U(\bar{x}) = \{\bar{y}\in X^n\; : \;(\exists s\in U)(\bar{x}s = \bar{y})\}\]
\end{itemize}
\end{definition}

We note immediately that $U(\bar{x})\subseteq S(\bar{x})\subseteq F(\bar{x})$ for any tuple $\bar{x}$ and that if $\bar{y}\in F(\bar{x})$ then $F(\bar{y})\subseteq F(\bar{x})$. Furthermore, the relation $\bar{x}\sim \bar{y}$ if and only if $\bar{x}$ and $\bar{y}$ are in the same strong orbit is an equivalence relation \cite{steinberg2010theory}. In comparison, the forward orbit is reflexive and transitive (and thus a preorder), but it may not be symmetric. We outline a basic lemma regarding these orbits which will be useful throughout the section; the proof of this is omitted.

\begin{lemma}\label{orbits} Let $T$ be a transformation monoid acting on a set of tuples $X^n$. For any tuple $\bar{x}\in X^n$, we have the following:
\begin{enumerate}[(1)]
\item $F(\bar{x}) = \bigcup_{\bar{y}\in F(\bar{x})}S(\bar{y})$;
\item $S(\bar{x}) = \bigcup_{\bar{y}\in S(\bar{x})}U(\bar{y})$. \dne
\end{enumerate}
\end{lemma}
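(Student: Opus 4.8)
The plan is to establish each of the two set equalities by proving inclusion in both directions, drawing on the three elementary facts recorded immediately before the statement: that $U(\bar{y}) \subseteq S(\bar{y}) \subseteq F(\bar{y})$ for every tuple, that $\bar{y} \in F(\bar{x})$ forces $F(\bar{y}) \subseteq F(\bar{x})$, and that membership in a common strong orbit is an equivalence relation. The only genuine content is bookkeeping with these containments, so I would treat the two parts symmetrically.

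For part (1), the forward inclusion $F(\bar{x}) \subseteq \bigcup_{\bar{y} \in F(\bar{x})} S(\bar{y})$ I would obtain from reflexivity: since the identity of $T$ sends each $\bar{z}$ to itself, $\bar{z} \in S(\bar{z})$, so any $\bar{z} \in F(\bar{x})$ lies in the term $S(\bar{z})$ of the union indexed by $\bar{y} = \bar{z}$. For the reverse inclusion, I would take $\bar{y} \in F(\bar{x})$ and $\bar{z} \in S(\bar{y})$; then $\bar{z} \in S(\bar{y}) \subseteq F(\bar{y})$, and since $\bar{y} \in F(\bar{x})$ gives $F(\bar{y}) \subseteq F(\bar{x})$, we conclude $\bar{z} \in F(\bar{x})$. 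This exhausts the right-hand side and yields equality.

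For part (2), the inclusion $S(\bar{x}) \subseteq \bigcup_{\bar{y} \in S(\bar{x})} U(\bar{y})$ again follows from reflexivity, now using that the identity is a unit, so $\bar{z} \in U(\bar{z})$ for any $\bar{z} \in S(\bar{x})$. For the reverse inclusion, given $\bar{y} \in S(\bar{x})$ and $\bar{z} \in U(\bar{y}) \subseteq S(\bar{y})$, I would invoke the equivalence-relation property: $\bar{y} \in S(\bar{x})$ means $\bar{x} \sim \bar{y}$, whence $S(\bar{y}) = S(\bar{x})$ and therefore $\bar{z} \in S(\bar{x})$.

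The argument has no real obstacle — it is pure unwinding of definitions — but the one point requiring slight care is the reverse inclusion in (2), where both symmetry and transitivity of $\sim$ are needed to pass from $S(\bar{y})$ back to $S(\bar{x})$. By contrast, the corresponding step in (1) uses only the one-sided monotonicity of $F$, which is precisely why the two parts, though structurally parallel, are not literally identical.
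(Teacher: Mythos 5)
Your proof is correct, and it is precisely the routine unwinding of the definitions that the paper itself omits (the authors state the lemma with ``the proof of this is omitted''), so there is no divergence of approach to report. One minor remark: in the reverse inclusion of (2), transitivity of $\sim$ alone already gives the needed containment $S(\bar{y})\subseteq S(\bar{x})$; symmetry is only required if you insist on the full equality $S(\bar{y})=S(\bar{x})$ as you do.
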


Recall that a permutation group $G\subseteq$ Sym$(X)$ is \emph{oligomorphic} if the action of $G$ componentwise on tuples of $X$ has finitely many orbits on $X^n$ for every $n\in\mb{N}$ \cite{oligomorphic1990}. The next definition, originally of \cite{masulovic2011oligomorphic}, places these concepts in the context of transformation monoids.

\begin{definition}\label{oligodef} 
We say that a transformation monoid $T\subseteq$ Self$(X)$ is \emph{oligomorphic} if the componentwise action of $T$ on tuples of $X$ has finitely many strong orbits on $X^n$ for every $n\in\mb{N}$.
\end{definition}

We note that if $T$ is itself a group, then the strong orbits are the group orbits and the definitions coincide; so any oligomorphic permutation group is an oligomorphic transformation monoid. Our next result provides more connections between oligomorphic permutation groups and oligomorphic transformation monoids, generalising \cite[Lemma 2.10]{masulovic2011oligomorphic}.

\begin{proposition}\label{opgopm} Let $T\subseteq$ Self$(X)$ be a transformation monoid with group of units $U$. If $U$ is an oligomorphic permutation group then $T$ is an oligomorphic transformation monoid.
\end{proposition}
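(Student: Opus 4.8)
The plan is to show that on each $X^n$ every strong orbit is a union of group orbits, so that the partition of $X^n$ into group orbits refines the partition into strong orbits; since $U$ is oligomorphic there are only finitely many group orbits on $X^n$, and hence at most that many strong orbits.

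First I would record the two partitions involved. The strong orbits $\{S(\bar{x}) : \bar{x}\in X^n\}$ form a partition of $X^n$, since $\bar{x}\sim\bar{y}$ (being in the same strong orbit) is an equivalence relation; and the group orbits $\{U(\bar{x}) : \bar{x}\in X^n\}$ also form a partition of $X^n$, because $U$ is a group acting componentwise on $X^n$. The crucial input is \autoref{orbits}(2), which states $S(\bar{x}) = \bigcup_{\bar{y}\in S(\bar{x})} U(\bar{y})$: this exhibits each strong orbit as a union of group orbits, so every group orbit is contained in a unique strong orbit. In other words, the group orbit partition refines the strong orbit partition.

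From this refinement I would build an injection from the set of strong orbits on $X^n$ to the set of group orbits on $X^n$. Each strong orbit $S$ is nonempty and equal to a union of group orbits, so I may choose one group orbit contained in $S$; since distinct strong orbits are disjoint, the chosen group orbits are distinct, giving the desired injection. Consequently the number of strong orbits on $X^n$ is at most the number of group orbits on $X^n$.

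Finally, since $U$ is an oligomorphic permutation group, it has finitely many orbits on $X^n$ for every $n\in\mb{N}$; by the bound above, $T$ then has finitely many strong orbits on $X^n$ for every $n$, which is precisely the statement that $T$ is an oligomorphic transformation monoid. I do not expect a serious obstacle: essentially all of the combinatorial content is already packaged into \autoref{orbits}(2), and the only point requiring care is that it is the group of units $U$ — rather than some arbitrary subgroup — that governs the group orbits, so that oligomorphicity of $U$ is exactly the hypothesis needed to bound the group orbit count.
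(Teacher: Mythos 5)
Your proposal is correct and follows essentially the same route as the paper: both rest on \autoref{orbits}(2) to exhibit each strong orbit as a union of group orbits of $U$, and then conclude finiteness of the strong orbits from oligomorphicity of $U$. The only difference is that you spell out the counting step (the injection from strong orbits to group orbits via the refinement of partitions), which the paper leaves implicit.
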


\begin{proof} As $U$ is an oligomorphic permutation group, there are finitely many group orbits $U(\bar{y})$ with $y\in X^n$ for every $n\in\mb{N}$. As every strong orbit $S(\bar{x})$ arises as the union of group orbits $U(\bar{y})$, we conclude that there are at most finitely many strong orbits of $T$ acting on $X^n$ for every natural number $n$ by \autoref{orbits}.
\end{proof}

\begin{remark} By the theorem of Engeler, Ryll-Nardzewski and Svenonius (see \cite{hodges1993model}), $U$ is an oligomorphic permutation group if and only if it is the automorphism group of some $\aleph_0$-categorical structure $\mc{M}$. By this proposition and the fact that Aut$(\mc{M})$ acts as the group of units for any endomorphism monoid $T\in\{$End$(\mc{M}),$ Epi$(\mc{M}),$ Mon$(\mc{M})$, Bi$(\mc{M})$, Emb$(\mc{M})\}$ of $\mc{M}$, we conclude that if $\mc{M}$ is $\aleph_0$-categorical then $T$ is an oligomorphic transformation monoid. (See \cite{lockett2014some} for the definitions of these various transformation monoids.) 
\end{remark}

This result provides us with numerous examples of oligomorphic transformation monoids, with the caveat that they are closely related to $\aleph_0$-categorical structures via their group of units. The main result of this section distances the notion of oligomorphicity in monoids from $\aleph_0$-categoricity by providing a different source of suitable examples; but first we detail some preliminary conditions. In the same way that homogeneous structures over a finite language provide examples of $\aleph_0$-categorical structures (and hence oligomorphic permutation groups), we turn to \emph{homomorphism-homogeneity} to provide examples of oligomorphic transformation monoids. We recall the eighteen different notions of homomorphism-homogeneity as presented in the two papers of Lockett and Truss \cite{lockettgeneric, lockett2014some} in \autoref{xyhomo}.

\begin{table}[h]
\renewcommand{\arraystretch}{1.2}
\centering
\begin{tabular}{c c c c}
\hline 
 & isomorphism (I) & monomorphism (M) & homomorphism (H) \\ 
\hline 
End$(\mc{M})$ (H) & IH & MH & HH \\ 
\hline 
Epi$(\mc{M})$ (E) & IE & ME & HE \\ 
\hline 
Mon$(\mc{M})$ (M) & IM & MM & HM \\ 
\hline 
Bi$(\mc{M})$ (B) & IB & MB & HB \\ 
\hline 
Emb$(\mc{M})$ (I) & II & MI & HI \\ 
\hline 
Aut$(\mc{M})$ (A) & IA & MA & HA \\ 
\hline 
\end{tabular}
\caption{Table of XY-homogeneity: a first-order structure $\mc{M}$ is XY-homogeneous if a finite partial map of type X (column) extends to a map of type Y (row) in the associated monoid.}\label{xyhomo}
\end{table}

We note that if a structure $\mc{M}$ is XY-homogeneous then it is also IY-homogeneous. For shorthand in our next result, denote the monoid of maps of type Y by Y$(\mc{M})$ for some structure $\mc{M}$. For example, H$(\mc{M})$ is the endomorphism monoid of $\mc{M}$, and A$(\mc{M})$ is the automorphism group. A previous observation of Lockett and Truss in \cite[p3]{lockett2014some} says that an endomorphism of a finite relational structure is an automorphism if and only if it is a bijection.

\begin{lemma}\label{finitebiequivalence} If $\mc{A}$ and $\mc{B}$ are finite $\sigma$-structures and $f:\mc{A}\rarr{}\mc{B}$ and $g:\mc{B}\rarr{}\mc{A}$ are bijective homomorphisms, then $\mc{A}\cong\mc{B}$ and $f,g$ are isomorphisms.
\end{lemma}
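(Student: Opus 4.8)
The plan is to exploit the finiteness of the structures to turn the bijective homomorphisms $f$ and $g$ into a counting argument about relations. The key observation is that a bijective homomorphism between finite structures can only add relational tuples, never remove them, so composing $f$ and $g$ gives a self-map that cannot strictly increase the number of relations in either direction.

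\medskip

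\noindent\textbf{Step 1: Set up the relation counts.} For a finite $\sigma$-structure $\mc{X}$ and each $i\in I$, let $r_i(\mc{X}) = |R_i^{\mc{X}}|$ denote the number of $n_i$-tuples satisfying $\bar{R}_i$. Since $f:\mc{A}\rarr{}\mc{B}$ is a homomorphism, whenever $R_i^{\mc{A}}(\bar{x})$ holds we have $R_i^{\mc{B}}(\bar{x}f)$; because $f$ is a bijection it is injective on tuples, so the map $\bar{x}\mapsto \bar{x}f$ sends $R_i^{\mc{A}}$ injectively into $R_i^{\mc{B}}$. Hence $r_i(\mc{A}) \le r_i(\mc{B})$ for every $i$. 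Applying the same reasoning to $g:\mc{B}\rarr{}\mc{A}$ gives $r_i(\mc{B}) \le r_i(\mc{A})$ for every $i$. Therefore $r_i(\mc{A}) = r_i(\mc{B})$ for all $i\in I$.

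\medskip

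\noindent\textbf{Step 2: Upgrade the homomorphisms to isomorphisms.} Fix some $i$. The image $R_i^{\mc{A}}f$ is a subset of $R_i^{\mc{B}}$ of the same finite cardinality $r_i(\mc{A}) = r_i(\mc{B})$, so in fact $R_i^{\mc{A}}f = R_i^{\mc{B}}$; equivalently, $R_i^{\mc{B}}(\bar{x}f)$ holds if and only if $R_i^{\mc{A}}(\bar{x})$ holds. Since this holds for every $i\in I$ and $f$ is a bijection on the domains, $f$ reflects all relations and is thus an isomorphism, giving $\mc{A}\cong\mc{B}$. The identical argument applied to $g$ (using $r_i(\mc{B})=r_i(\mc{A})$) shows $g$ is an isomorphism. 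One can alternatively phrase this via the cited observation of Lockett and Truss: composing yields $fg$ a bijective endomorphism of $\mc{A}$, hence an automorphism, from which invertibility of $f$ as a homomorphism follows.

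\medskip

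\noindent\textbf{Main obstacle.} There is no serious difficulty here; the only point requiring care is that the signature $\sigma$ may be infinite (the index set $I$ is arbitrary), so the argument must be carried out relationwise rather than by summing a single cardinality over all relations at once. Since each individual $r_i(\mc{X})$ is finite (as $\mc{A}$, $\mc{B}$ are finite and each $n_i$ is finite), the inequality-squeeze in Step~1 is valid for each $i$ separately, and finiteness of each $R_i$ is exactly what lets an injective self-containment of equal cardinality force equality. No global finiteness of the language is needed.
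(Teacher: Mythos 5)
Your proof is correct, but it takes a genuinely different route from the paper's. The paper composes the two maps: $fg$ is a bijective endomorphism of the finite structure $\mc{A}$, hence an automorphism by the Lockett--Truss observation quoted just before the lemma; then, if $f$ failed to reflect some relation, say $\neg R_i^{\mc{A}}(\bar{a})$ but $R_i^{\mc{B}}(\bar{a}f)$, applying the homomorphism $g$ would give $R_i^{\mc{A}}(\bar{a}fg)$, contradicting the fact that the automorphism $fg$ preserves non-relations. You instead avoid both the composition and the cited observation, arguing relation-wise: $f$ maps $R_i^{\mc{A}}$ injectively into $R_i^{\mc{B}}$ and $g$ maps $R_i^{\mc{B}}$ injectively into $R_i^{\mc{A}}$, so these finite sets have equal cardinality, which forces $R_i^{\mc{A}}f = R_i^{\mc{B}}$ and hence (using injectivity of $f$ on tuples) that $f$ reflects every relation. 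In effect you have inlined a proof of the Lockett--Truss observation itself, in a two-structure form: your argument is more self-contained and makes explicit exactly where finiteness enters, whereas the paper's is shorter because it leans on an observation already stated; your closing remark correctly identifies the paper's route as the alternative phrasing. Both arguments proceed relation by relation, so both handle an infinite relational signature without difficulty, as you note.
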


\begin{proof} The composition map $fg:\mc{A}\rarr{}\mc{A}$ is a bijective endomorphism of $\mc{A}$. By the observation above, $fg$ must be an automorphism of $\mc{A}$. For some $\bar{a}\in A^{n_i}$, if $\neg R_i^{\mc{A}}(\bar{a})$ and $R_i^{\mc{B}}(\bar{a}f)$, then $R_i^{\mc{A}}(\bar{a}fg)$ as $g$ is a homomorphism. Since $fg$ is an automorphism this is a contradiction; so $f$ must preserve non-relations and is therefore an isomorphism. A similar argument applies to show that $g$ is an isomorphism.
\end{proof}

\begin{proposition}\label{xyorbits} Let $\mc{M}$ be an XY-homogeneous structure with domain $M$. Then two tuples $\bar{a}=(a_1,\ldots,a_n)$ and $\bar{b}=(b_1,\ldots,b_n)$ are in the same strong orbit of Y$(\mc{M})$ if and only if there exists a partial isomorphism $f$ of $\mc{M}$ such that $\bar{a}f = \bar{b}$.
\end{proposition}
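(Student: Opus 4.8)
The plan is to prove both directions of the biconditional, with the forward direction being the substantive one and the reverse direction following quickly from the XY-homogeneity hypothesis. Throughout I will use the observation (stated just before Lemma~\ref{finitebiequivalence}) that a bijective endomorphism of a finite relational structure is an automorphism, packaged into \autoref{finitebiequivalence}: if two finite structures admit bijective homomorphisms in both directions, these maps are isomorphisms. The key conceptual point is that membership of $\bar a,\bar b$ in the same strong orbit of $\mathrm{Y}(\mc{M})$ gives maps $s,t \in \mathrm{Y}(\mc{M})$ with $\bar a s=\bar b$ and $\bar b t=\bar a$, and I want to upgrade the ``local'' data these maps provide on the coordinates of the tuples into a genuine partial isomorphism.

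First I would prove the reverse implication. Suppose there is a partial isomorphism $f$ of $\mc{M}$ with $\bar a f=\bar b$. A partial isomorphism is in particular a finite partial monomorphism (indeed a finite partial isomorphism), so by XY-homogeneity — noting that X $\in\{$I, M, H$\}$ and every partial isomorphism is a finite partial map of type X — it extends to a map $s\in\mathrm{Y}(\mc{M})$; applying the same to $f^{-1}$, which is also a partial isomorphism sending $\bar b$ to $\bar a$, yields $t\in\mathrm{Y}(\mc{M})$ with $\bar b t=\bar a$. Hence $\bar a,\bar b$ lie in the same strong orbit.

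For the forward implication, assume $\bar a,\bar b$ are in the same strong orbit, so fix $s,t\in\mathrm{Y}(\mc{M})$ with $\bar a s=\bar b$ and $\bar b t=\bar a$. Let $\mc{A}$ be the substructure of $\mc{M}$ induced on the coordinates $\{a_1,\dots,a_n\}$ and $\mc{B}$ that induced on $\{b_1,\dots,b_n\}$. The idea is that $s$ restricts to a map $\mc{A}\to\mc{B}$ and $t$ restricts to a map $\mc{B}\to\mc{A}$; since $\mathrm{Y}(\mc{M})$ consists of maps that are at least homomorphisms, these restrictions are homomorphisms. The composite $st$ fixes each $a_i$ (because $\bar a st=\bar b t=\bar a$ coordinatewise), so the restriction of $s$ to $\{a_1,\dots,a_n\}$ is injective, hence the induced map $\mc{A}\to\mc{B}$ is a bijective homomorphism; symmetrically the restriction of $t$ gives a bijective homomorphism $\mc{B}\to\mc{A}$. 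By \autoref{finitebiequivalence} these restrictions are isomorphisms, so the map $a_i\mapsto b_i$ is a partial isomorphism $f$ of $\mc{M}$ with $\bar a f=\bar b$, as required.

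The main obstacle, and the point requiring care rather than cleverness, is the bookkeeping that turns the global monoid elements $s,t$ into the finite-structure maps to which \autoref{finitebiequivalence} applies. One must check that $s$ genuinely induces a well-defined map between the induced substructures $\mc{A}$ and $\mc{B}$ on the \emph{sets} underlying the tuples (the $a_i$ or $b_i$ may repeat, but the images are determined consistently since $s$ is a function), and that injectivity of $s$ restricted to $\{a_1,\dots,a_n\}$ forces the tuples $\bar a,\bar b$ to have matching equality patterns — that $a_i=a_j \iff b_i=b_j$ — which is exactly what is needed for $a_i\mapsto b_i$ to be a well-defined bijection. Once this is in place, \autoref{finitebiequivalence} does all the remaining work of promoting bijective homomorphisms to isomorphisms, so the argument does not depend on which of the five monoids $\mathrm{Y}(\mc{M})$ is chosen, only on the fact that its members are homomorphisms and that $\mc{M}$ is XY-homogeneous.
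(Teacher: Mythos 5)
Your proposal is correct and follows essentially the same route as the paper's own proof: both directions rest on the same two ingredients, namely the injectivity of the restrictions $s|_{\bar a}$, $t|_{\bar b}$ obtained from the composite fixing $\bar a$ coordinatewise, followed by an application of \autoref{finitebiequivalence} to promote the resulting bijective homomorphisms between the induced finite structures to isomorphisms, while the converse uses XY-homogeneity (equivalently, the implied IY-homogeneity) to extend $f$ and $f^{-1}$ to elements of Y$(\mc{M})$. Your additional remarks on repeated coordinates and matching equality patterns only make explicit a point the paper leaves implicit.
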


\begin{proof} Suppose that $\alpha,\beta\in$ Y$(\mc{M})$ are maps such that $\bar{a}\alpha = \bar{b}$ and $\bar{b}\beta = \bar{a}$ respectively; so $a_i\alpha = b_i$ and $b_i\beta = a_i$ for $1\leq i\leq n$. If $\alpha$ sends elements $a_i \neq a_j$ of $\bar{a}$ to elements $a_i\alpha = a_j\alpha$ of $\bar{b}$, then we have that $a_i\alpha\beta = a_j\alpha\beta$ and so $a_i = a_j$ which is a contradiction. Hence the restrictions $\alpha|_{\bar{a}}$ and $\beta|_{\bar{b}}$ are injective maps and so they are also bijections. Now, we consider the maps $\alpha|_{\bar{a}}:\mc{A}\rarr{}\mc{B}$ and $\beta|_{\bar{b}}:\mc{B}\rarr{}\mc{A}$, where $\mc{A},\mc{B}$ are the structures induced by $\mc{M}$ on $\bar{a},\bar{b}$ respectively. By \autoref{finitebiequivalence}, we see that $\mc{A}\cong \mc{B}$ and so we can define $f = \alpha|_{\bar{a}}$. Conversely, we see that if $f$ is an isomorphism between the structures $\mc{A}$ and $\mc{B}$. By XY-homogeneity (and hence IY-homogeneity) of $\mc{M}$, we extend $f$ to a map $\alpha\in $ Y$(\mc{M})$ such that $\bar{a}\alpha = \bar{b}$. Similarly, we extend the map $f^{-1}:\mc{B}\rarr{}\mc{A}$ to a map $\beta\in$ Y$(\mc{M})$ such that $\bar{b}\beta = \bar{a}$ and so $\bar{a}$ and $\bar{b}$ are in the same strong orbit.
\end{proof}

We now move on to proving the final result of this section. A result of \cite[ch2]{oligomorphic1990} states that, for Aut$(\mc{M})$ acting on $M^n$, the number of group orbits on $n$-tuples is finite if the number of group orbits on $k$-tuples of distinct elements is finite for every $k\leq n$. Using the fact from the proof of \autoref{xyorbits} that maps between two tuples in the same strong orbit are bijections, it is not hard to show that a similar result holds for the number of strong orbits of $n$-tuples when Y$(\mc{M})$ is acting on $M^n$. This, together with \autoref{xyorbits}, proves the following theorem.

\begin{theorem}\label{xyotm} If $\mc{M}$ is an XY-homogeneous structure over a finite relational language, then Y$(\mc{M})$ is an oligomorphic transformation monoid.
\end{theorem}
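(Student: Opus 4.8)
The plan is to combine \autoref{xyorbits} with a counting argument that mimics the classical reduction for permutation groups quoted above. By \autoref{xyorbits}, two $n$-tuples $\bar{a}$ and $\bar{b}$ lie in the same strong orbit of Y$(\mc{M})$ precisely when there is a partial isomorphism $f$ of $\mc{M}$ with $\bar{a}f = \bar{b}$; so a strong orbit is exactly a class of tuples whose induced substructures are isomorphic via the bijection matching up coordinates. The goal is then to show that there are only finitely many such classes in $M^n$ for each $n$.

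First I would establish the reduction from arbitrary tuples to tuples of distinct elements, paralleling the result of \cite[ch2]{oligomorphic1990}. The key input is the fact extracted in the proof of \autoref{xyorbits}: if $\bar{a}$ and $\bar{b}$ are in the same strong orbit, then the maps $\alpha,\beta$ realising this restrict to \emph{bijections} between the tuples. Consequently $a_i = a_j$ if and only if $b_i = b_j$, so strong-orbit equivalence preserves the equality pattern of a tuple. Since there are only finitely many equality patterns on $n$ coordinates (one for each partition of $\{1,\ldots,n\}$), the number of strong orbits on $M^n$ equals the sum, over these patterns, of the number of strong orbits on tuples of $k$ distinct elements with $k\leq n$; hence finiteness on distinct $k$-tuples for all $k\leq n$ suffices.

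Next I would bound the number of strong orbits on distinct $k$-tuples. By \autoref{xyorbits} again, each such orbit is determined by the ordered isomorphism type of the substructure that $\mc{M}$ induces on the $k$ distinct coordinates. Because $\sigma$ is a finite relational language, there are only finitely many $\sigma$-structures on a fixed $k$-element set (each relation $R_i$ of arity $n_i$ selects a subset of the $k^{n_i}$ possible tuples, and there are finitely many relations), and hence only finitely many isomorphism types. This bounds the number of strong orbits on distinct $k$-tuples, and combining with the previous step yields finitely many strong orbits on $M^n$ for every $n$, which is precisely oligomorphicity of Y$(\mc{M})$.

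The step I expect to be the main obstacle is the reduction to distinct tuples. In the group setting the analogous argument is immediate because every group element is a bijection; in the monoid setting an endomorphism may collapse coordinates, so one must work with strong (rather than forward) orbits and invoke the bijectivity of the restricted maps to guarantee that equality patterns are respected. Once that point is secured, the remaining counting is routine and directly mirrors the permutation-group case.
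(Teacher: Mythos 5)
Your proposal is correct and follows essentially the same route as the paper: \autoref{xyorbits} converts strong orbits into ordered isomorphism types, the finite relational language bounds the number of types on distinct $k$-tuples, and the bijectivity of maps between tuples in the same strong orbit gives the reduction from arbitrary $n$-tuples to distinct $k$-tuples. The only difference is that you spell out the equality-pattern argument for that reduction, which the paper merely asserts (``it is not hard to show'') as a monoid analogue of the classical result in \cite[ch2]{oligomorphic1990}.
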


\begin{proof} As $\mc{M}$ is over a finite relational language, it has finitely many isomorphism types on $k$-tuples of distinct elements for any $k\in\mb{N}$. By \autoref{xyorbits}, there are finitely many strong orbits on $k$-tuples of distinct elements for every $k\in \mb{N}$. The result then follows from the observation above.
\end{proof}

Using this theorem, we can find examples of structures with oligomorphic transformation monoids that are not $\aleph_0$-categorical. For instance, any homomorphism-homogeneous poset not in Schmerl's classification (see \cite{mavsulovic2007homomorphism} or \cite{lockett2014some}) has an oligomorphic endomorphism monoid. A notable instance is Corollary 2.2(a) in \cite{cameron2006homomorphism}; there exists a countably infinite graph $\G$ with oligomorphic endomorphism (and monomorphism) monoid but a trivial automorphism group. 

Of particular interest to this paper is the idea of an \emph{oligomorphic permutation monoid}. This follows as a special case of \autoref{oligodef}, where the transformation monoid $T$ is also a permutation monoid. An instance of \autoref{xyotm} states that if $\mc{M}$ is an MB-homogeneous structure over a finite relational language, then Bi$(\mc{M})$ is an oligomorphic permutation monoid. It follows that finding MB-homogeneous structures give interesting examples of permutation monoids.

\section{Permutation monoids and MB-homogeneity}\label{s3}

This section is devoted to the study of infinite permutation monoids and their connection with bimorphisms of structures. This includes a characterisation of closed permutation monoids (\autoref{closed}) and a Fra\"{i}ss\'{e}-like theorem for MB-homogeneous structures (Propositions \ref{mapbap}, \ref{mbfraisse}, and \ref{biequiv}) examples of which provide oligomorphic permutation monoids by \autoref{xyotm}.

\subsection{Permutation monoids and bimorphisms}

To start, recall that the symmetric group Sym$(M)$ on a countably infinite set $M$ has a natural topology given by pointwise convergence, with a basis for open sets given by the cosets of stabilizers of finite tuples. It is well-known (\cite{reyes1970local}, see 4.1.4 in \cite{hodges1993model}) that $G\leq$ Sym$(M)$ is closed under the pointwise convergence topology if and only if it is the automorphism group of some first-order structure $\mc{M}$ on domain $M$. This was generalised by Cameron and Ne\v{s}et\v{r}il \cite{cameron2006homomorphism} to closed submonoids $T\leq$ End$(M)$ under the product topology; this occurs if and only if $T$ is the endomorphism monoid of some first-order structure $\mc{M}$ on domain $M$. Our first result, \autoref{closed}, provides an analogous result for closed permutation monoids; the proof of which is similar to other results along these lines. To do this, we recall the following standard result from point-set topology \cite[Theorem 17.2]{munkres2000topology}: if $Y$ is a subspace of some topological space $X$, then a set $A$ is closed in $Y$ under the subspace topology inherited from $X$ if and only if $A = Y\cap B$, where $B$ is some closed set in $X$. Furthermore, the pointwise convergence topology on Sym$(M)$ is the same as the topology induced on the set by End$(M)$ via the inclusion map: so Sym$(M)$ is a subspace of End$(M)$. For more background on point-set topology, we refer the reader to \cite{munkres2000topology}.

\begin{theorem}\label{closed} Let $M$ be a countable set. A submonoid $T$ of Sym$(M)$ is closed under the pointwise convergence topology if and only if it is the bimorphism monoid of some structure $\mc{M}$ on domain $M$.
\end{theorem}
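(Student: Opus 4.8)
The plan is to prove the two implications separately, disposing of the ``if'' direction quickly by reducing to the known closure result for endomorphism monoids, and concentrating the work on the ``only if'' direction through an explicit construction of $\mc{M}$.

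For the ``if'' direction, suppose $T = \mathrm{Bi}(\mc{M})$ for some $\sigma$-structure $\mc{M}$ on $M$. I would first record the identity $\mathrm{Bi}(\mc{M}) = \mathrm{End}(\mc{M}) \cap \mathrm{Sym}(M)$, since a bijective endomorphism is exactly an endomorphism that happens to lie in the symmetric group. By the Cameron--Ne\v{s}et\v{r}il result recalled above, $\mathrm{End}(\mc{M})$ is closed in $\mathrm{End}(M)$ under the product topology. Taking $B = \mathrm{End}(\mc{M})$ as a closed subset of $X = \mathrm{End}(M)$ and $Y = \mathrm{Sym}(M)$ as the subspace of $X$ identified above, the quoted point-set topology fact gives that $T = B \cap Y$ is closed in $\mathrm{Sym}(M)$ under the subspace topology, which is precisely the pointwise convergence topology. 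This settles the direction with essentially no additional work.

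For the ``only if'' direction, suppose $T \leq \mathrm{Sym}(M)$ is a closed submonoid. I would build a relational structure $\mc{M}$ whose relations record the forward orbits of $T$: for every $n \in \mb{N}$ and every tuple $\bar{a} \in M^n$, introduce an $n$-ary relation symbol interpreted as the forward orbit $F(\bar{a}) = \{\bar{a}s : s \in T\}$. The claim is then that $\mathrm{Bi}(\mc{M}) = T$, and I would verify the two inclusions in turn. The inclusion $T \subseteq \mathrm{Bi}(\mc{M})$ is immediate: each $t \in T$ is a bijection, and if $\bar{b} \in F(\bar{a})$, say $\bar{b} = \bar{a}s$ with $s \in T$, then $\bar{b}t = \bar{a}(st) \in F(\bar{a})$ because $st \in T$; hence $t$ preserves every relation and is a bijective endomorphism.

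The reverse inclusion $\mathrm{Bi}(\mc{M}) \subseteq T$ is where the topological hypothesis enters, and I expect it to be the crux of the argument. Take $g \in \mathrm{Bi}(\mc{M})$ and an arbitrary finite tuple $\bar{a} \in M^n$. Since $T$ is a monoid we have $\mathrm{id} \in T$, so $\bar{a} \in F(\bar{a})$ and the relation interpreted by $F(\bar{a})$ holds of $\bar{a}$; as $g$ is a homomorphism it holds of $\bar{a}g$, whence $\bar{a}g = \bar{a}t$ for some $t \in T$, so $a_i g = a_i t$ for each $i$. Thus $g$ agrees with a member of $T$ on every finite tuple, placing $g$ in the closure of $T$; since $T$ is closed, $g \in T$. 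The only delicate point is choosing the relations so that membership of $\bar{a}g$ in $F(\bar{a})$ simultaneously follows from $g$ being a homomorphism and certifies that $g$ coincides with an element of $T$ on $\bar{a}$; once the forward orbits are used as relations this is automatic, and the remainder is routine verification.
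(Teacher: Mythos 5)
Your proposal is correct and follows essentially the same route as the paper's proof: the same reduction of the ``if'' direction to the Cameron--Ne\v{s}et\v{r}il closure result for $\mathrm{End}(\mc{M})$ via the subspace topology, and the same construction for the ``only if'' direction, where the relations are the forward orbits $F(\bar{a})$ (the paper's $R_{\bar{x}}$) and the reverse inclusion is obtained by showing any bimorphism agrees with an element of $T$ on every finite tuple and hence lies in the closed set $T$. No gaps; the arguments for both containments match the paper's step for step.
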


\begin{proof} We begin with the converse direction. Suppose that $T=$ Bi$(\mc{M})$ is the bimorphism monoid of a structure $\mc{M}$ on domain $M$. As Sym$(M)$ is a subspace of End$(M)$, and Bi$(\mc{M})$ is the intersection of Sym$(M)$ and the closed set End$(\mc{M})$ of End$(M)$, it follows from the result above that Bi$(\mc{M})$ is closed in Sym$(M)$.

For the forward direction, assume that $T$ is a closed submonoid of Sym$(M)$. Define an $n$-ary relation $R_{\bar{x}}$ by \[R_{\bar{x}}(\bar{y}) \Leftrightarrow (\exists s\in T)(\bar{x}s = \bar{y})\] for each $n\in\mb{N}$ and $\bar{x}\in M^n$. Let $\mc{M}$ be the relational structure on $M$ with relations $R_{\bar{x}}$ for all tuples $\bar{x}\in M^n$ and all $n\in\mb{N}$. The proof that $T=$ Bi$(\mc{M})$ is by containment both ways.

As every element of $T$ is already a permutation of the domain $M$ of $\mc{M}$, proving that $T$ acts as endomorphisms on $\mc{M}$ is enough to show that $T\subseteq$ Bi$(\mc{M})$. Assume then that $s\in T$ and $\bar{y}\in M^n$ such that $R_{\bar{x}}(\bar{y})$ holds. Since this happens, there exists an $s'\in T$ such that $\bar{x}s' = \bar{y}$. Therefore $\bar{x}s's = \bar{y}s$ and so $R_{\bar{x}}(\bar{y}s)$ holds. So $T\subseteq$ End$(\mc{M})$ and hence $T\subseteq$ Bi$(\mc{M})$. 

It remains to show that Bi$(\mc{M})\subseteq T$, so suppose that $\alpha\in$ Bi$(\mc{M})$. Our aim is to show that $\alpha$ is a limit point of $T$. Since $T$ is closed, it must contain all its limit points. Note that each $n$-tuple $\bar{x}$ defines a neighbourhood of $\alpha$, consisting of all functions $\beta$ such that $\bar{x}\alpha = \bar{x}\beta$. As $T$ is a monoid, it follows that $R_{\bar{x}}(\bar{x})$ holds and so $R_{\bar{x}}(\bar{x}\alpha)$ holds. By definition of $R_{\bar{x}}$, there exists $s\in T$ such that $\bar{x}\alpha = \bar{x}s$; hence $\alpha$ is a limit point of $T$. Therefore $\alpha\in T$ and so Bi$(\mc{M})\subseteq T$, completing the proof. 
\end{proof}

\begin{remark} It is a well-known result from descriptive set theory that any closed subset $A$ of a Polish space $X$ is itself a Polish space with the induced topology from $X$ (see \cite{kechris2012classical}). As Sym$(M)$ is a Polish space, it follows that Bi$(\mc{M})$ is also a Polish space; so bimorphisms of first-order structures provide natural examples of Polish \emph{monoids}. We leave this area of investigation open.
\end{remark}

Our aim now is to determine a cardinality result for closed submonoids of Sym$(X)$. For any $\bar{x}\in X^n$, denote the \emph{pointwise stabilizer} of $\bar{x}$ to be the set St$(x)$. Note also that as Bi$(\mc{M})$ is a group-embeddable monoid, it is therefore \emph{cancellative}. Recall that a monoid $T$ is cancellative if for all $x,y,z\in T$, then $xy = xz$ implies that $y = z$ and $yx = zx$ also implies that $y = z$.

\begin{proposition} Let $\mc{M}$ be a countably infinite first-order structure. If St$(\bar{x}) \neq \{e\}$ for all tuples $\bar{x}\in M^n$, then $|$Bi$(\mc{M})| = 2^{\aleph_0}$.
\end{proposition}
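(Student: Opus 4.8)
The plan is to realise Bi$(\mc{M})$ as a nonempty \emph{perfect} Polish space and then invoke the classical fact from descriptive set theory that every nonempty perfect Polish space has cardinality $2^{\aleph_0}$ (see \cite{kechris2012classical}). Combined with the trivial upper bound $|\text{Bi}(\mc{M})| \leq |\text{Sym}(M)| = \aleph_0^{\aleph_0} = 2^{\aleph_0}$, valid since $M$ is countable, this yields $|\text{Bi}(\mc{M})| = 2^{\aleph_0}$. That Bi$(\mc{M})$ is Polish is exactly the content of the Remark following \autoref{closed}: by \autoref{closed} it is a closed submonoid of the Polish space Sym$(M)$, and closed subspaces of Polish spaces are Polish. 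So the whole proof reduces to showing that Bi$(\mc{M})$ has no isolated points.

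To establish this, I would fix $\alpha\in$ Bi$(\mc{M})$ and an arbitrary basic open neighbourhood of it, which by definition of the pointwise convergence topology has the form $N=\{\beta\in \text{Bi}(\mc{M}) : \bar{x}\beta = \bar{x}\alpha\}$ for some finite tuple $\bar{x}$, and then produce an element of $N$ distinct from $\alpha$. Applying the hypothesis to the tuple $\bar{x}\alpha$ gives some $\gamma\in$ St$(\bar{x}\alpha)$ with $\gamma\neq e$, so that $(\bar{x}\alpha)\gamma = \bar{x}\alpha$. Set $\beta = \alpha\gamma$. Since Bi$(\mc{M})$ is a monoid, $\beta\in$ Bi$(\mc{M})$, and $\bar{x}\beta = (\bar{x}\alpha)\gamma = \bar{x}\alpha$, so $\beta\in N$. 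Finally $\beta\neq \alpha$: if $\alpha\gamma = \alpha = \alpha e$, then left cancellativity of Bi$(\mc{M})$ (noted before the statement) forces $\gamma = e$, contrary to choice. As every neighbourhood of $\alpha$ contains such a basic $N$, the point $\alpha$ is not isolated, and since $\alpha$ was arbitrary, Bi$(\mc{M})$ is perfect.

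The displacement $\beta=\alpha\gamma$ is really the only step with content, and it is short; the main thing to get right is that this witness genuinely lies in the monoid and is genuinely different from $\alpha$. This is precisely where the two standing hypotheses are used: closure under composition to keep $\beta$ inside Bi$(\mc{M})$, and cancellativity to separate $\beta$ from $\alpha$, together with the nontriviality of the stabiliser of the image tuple $\bar{x}\alpha$. The remaining work is bookkeeping: verifying the upper bound and quoting the perfect-Polish-space theorem.

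I could also give a more hands-on route that avoids descriptive set theory entirely: iterate the displacement $\beta=\alpha\gamma$ along a binary tree to build, for each $\eta\in 2^{\omega}$, a sequence of bimorphisms agreeing on longer and longer initial segments of a fixed enumeration of $M$ and converging in Bi$(\mc{M})$, using closure, to a bimorphism $\alpha_\eta$, with distinct branches forced to disagree at some coordinate so that $\eta\mapsto\alpha_\eta$ is injective. This produces $2^{\aleph_0}$ distinct bimorphisms directly. Since the perfect-space argument merely packages this Cantor-scheme idea, I would present the former as the clean version.
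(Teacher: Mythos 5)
Your proof is correct and is essentially the paper's own argument: both show that Bi$(\mc{M})$ is a perfect subset of the Polish space Sym$(M)$ by displacing an arbitrary $\alpha$ with a nontrivial stabilising element and using cancellativity to keep the displaced point distinct, then quote the descriptive-set-theoretic fact that such a set has cardinality $2^{\aleph_0}$. The only (cosmetic) difference is that the paper left-composes, taking $t_k\alpha$ with $t_k\in\mathrm{St}(\bar{x}_k)\smallsetminus\{e\}$ for initial segments $\bar{x}_k$ of an enumeration of $M$, whereas you right-compose, taking $\alpha\gamma$ with $\gamma\in\mathrm{St}(\bar{x}\alpha)\smallsetminus\{e\}$; both work since the hypothesis applies to all tuples and Bi$(\mc{M})$ is cancellative on both sides.
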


\begin{proof} Suppose that St$(\bar{x})\neq\{e\}$ for all tuples $\bar{x}\in M^n$. As $\mc{M}$ is countably infinite, we can enumerate elements of $M=\{x_1,x_2,\ldots\}$. Using this enumeration, we define a sequence of tuples $(\bar{x}_k)_{k\in\mb{N}}$ where $\bar{x}_k = (x_1,\ldots,x_k)$ for all $k\in\mb{N}$. Since St$(\bar{x})\neq \{e\}$ for all tuples $\bar{x}$ of $\mc{M}$, for each $k\in\mb{N}$ there exists $t_k\in$ Bi$(\mc{M})$ such that $t_k\neq e$ and $\bar{x}_kt_k = \bar{x}_k$. So as $k\to \infty$ then $(t_k)_{k\in\mb{N}} \to e$ by construction and so $e$ is a limit point of Bi$(\mc{M})$.

Now, for some $\alpha\in$ Bi$(\mc{M})$, consider the sequence $(t_k\alpha)_{k\in\mb{N}}$. Here, $t_k\alpha \neq \alpha$ for any $k\in\mb{N}$; for if $t_k\alpha = \alpha$ for some $k$, then cancellativity of Bi$(\mc{M})$ implies that $t_k = e$, contradicting our earlier assumption. Then $\alpha$ is a limit point for the sequence $(t_k\alpha)_{k\in\mb{N}}$, and so every element of Bi$(\mc{M})$ is a limit point. This means that Bi$(\mc{M})$ is a perfect set and thus has cardinality of the continuum (ch 6, \cite{kechris2012classical}).
\end{proof}

\subsection{Construction of MB-homogeneous structures}

As mentioned in the introduction, Fra\"{i}ss\'{e} demonstrated a powerful tool for constructing homogeneous structures, whose automorphism groups provide interesting examples of infinite permutation groups. Cameron and Ne\v{s}et\v{r}il \cite{cameron2006homomorphism} adapted Fra\"{i}ss\'{e}'s theorem in order to construct MM-homogeneous structures. In this section, we build on these results, providing a Fra\"{i}ss\'{e}-like theorem for constructing MB-homogeneous structures, whose bimorphism monoids provide interesting examples of infinite permutation monoids. Throughout this section $\sigma$ is a relational signature, $\ms{C}$ is a class of finite $\sigma$-structures and, following convention, we write $A,B$ to mean the $\sigma$-structures on domains $A,B$. We refer the reader to \cite{hodges1993model} for further background on model theory.

There are two main properties that a class of finite structures $\ms{C}$ needs to have that guarantee the existence of a countable homogeneous structure with age $\ms{C}$. The first is the \emph{joint embedding property} (JEP); this property ensures that we can construct a countable structure $\mc{M}$ with age $\ms{C}$. The second is the \emph{amalgamation property} (AP); this ensures our constructed structure $\mc{M}$ has the \emph{extension property}, which in turn implies homogeneity of $\mc{M}$. When building a countable MB-homogeneous structure $\mc{M}$ with age $\ms{C}$, we still need the JEP to ensure that $\mc{M}$ has the age we want but we need a different amalgamation property to ensure MB-homogeneity rather than standard homogeneity. As we are aiming to extend to bijective endomorphisms, we require a ``back and forth" style argument to ensure that the extended map is indeed bijective. Due to the fact that bimorphisms are not automorphisms in general, we require \emph{two} amalgamation conditions, and hence two extension conditions, to ensure that $\mc{M}$ is MB-homogeneous. We begin this section by examining the required extension properties.

If $\mc{M}$ is MB-homogeneous then $\mc{M}$ is MM-homogeneous; it follows (by Proposition 4.1(b) of \cite{cameron2006homomorphism}) that $\mc{M}$ must have the \emph{mono-extension property} (MEP), which takes care of the forward extension:

\begin{quotation} (MEP) For all $A,B\in$ Age$(\mc{M})$ with $A\subseteq B$ and monomorphism $f:~A\rarr{}~\mc{M}$, there exists a monomorphism $g:B\rarr{}\mc{M}$ extending $f$.
\end{quotation}

Any suitable ``back" condition should express in its statement the key difference between a monomorphism of $\mc{M}$ and a bimorphism. This difference is the existence of a preimage; for any $\alpha\in$ Bi$(\mc{M})$ and substructure $A\subseteq \mc{M}$ there exists a substructure $B\subseteq \mc{M}$ such that $B\alpha = A$. Note that this is not true for an arbitrary monomorphism $\beta$ of $\mc{M}$ as $\mc{M}\beta$ is not required to equal $\mc{M}$. The existence of a preimage for every extended map is the condition we wish to ensure in our ``back" extension property, and this motivates our next definition.

\begin{definition} Let $A,B$ be two $\sigma$-structures. We say that an injective map $\bar{f}:A\rarr{}B$ is an \emph{antimonomorphism} if and only if $\neg R^A(a_1,\ldots,a_n)$ implies $\neg R^B(a_1\bar{f},\ldots,a_n\bar{f})$ for all $n$-ary relations $R$ of $\sigma$.
\end{definition}

\begin{remarks} Note that any map that is both a monomorphism and an antimonomorphism is an embedding. 

It is an easy exercise to show that the function composition of two antimonomorphisms is again an antimonomorphism. In particular, it is important to note that the composition of an antimonomorphism and an embedding is an antimonomorphism.
\end{remarks}

We now state and prove a lemma stating that a preimage of a monomorphism is an antimonomorphism. For any bijective function $f:A\to B$ we use $f^{-1}:B\to A$ to denote the inverse of $f$. 

\begin{lemma}\label{antimono} Let $A,B$ be two $\sigma$-structures, and suppose that $f:A\rarr{}B$ is a bijection. Then $f$ is a monomorphism if and only if $f^{-1}$ is an antimonomorphism.
\end{lemma}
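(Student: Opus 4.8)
The plan is to prove both directions of the biconditional directly from the definitions, using the bijectivity of $f$ to translate statements about $f$ into statements about $f^{-1}$ by substituting preimages for elements. Since $f:A\to B$ is a bijection, every tuple $(b_1,\ldots,b_n)\in B^n$ can be written uniquely as $(a_1 f,\ldots,a_n f)$ with $a_i = b_i f^{-1}$, and this reindexing is the only real mechanism the argument needs.

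First I would prove the forward direction. Assume $f$ is a monomorphism, so $R^A(a_1,\ldots,a_n)$ implies $R^B(a_1 f,\ldots,a_n f)$ for every $n$-ary relation $R$. To show $f^{-1}$ is an antimonomorphism I must verify that $\neg R^B(b_1,\ldots,b_n)$ implies $\neg R^A(b_1 f^{-1},\ldots,b_n f^{-1})$. I would argue by contraposition: suppose instead that $R^A(b_1 f^{-1},\ldots,b_n f^{-1})$ holds. Applying the monomorphism property of $f$ to the tuple $(b_1 f^{-1},\ldots,b_n f^{-1})$ gives $R^B(b_1 f^{-1} f,\ldots,b_n f^{-1} f)$, which is exactly $R^B(b_1,\ldots,b_n)$ since $f^{-1}f$ is the identity on $B$. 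This contradicts $\neg R^B(b_1,\ldots,b_n)$, completing the forward direction. (The injectivity of $f^{-1}$, needed for it to count as an antimonomorphism, is immediate as $f$ is a bijection.)

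For the reverse direction I would run essentially the mirror-image argument. Assume $f^{-1}$ is an antimonomorphism, so $\neg R^B(b_1,\ldots,b_n)$ implies $\neg R^A(b_1 f^{-1},\ldots,b_n f^{-1})$. To show $f$ is a monomorphism, suppose $R^A(a_1,\ldots,a_n)$; I want $R^B(a_1 f,\ldots,a_n f)$. Again by contraposition, if $\neg R^B(a_1 f,\ldots,a_n f)$ held, then applying the antimonomorphism property of $f^{-1}$ to the tuple $(a_1 f,\ldots,a_n f)$ would give $\neg R^A(a_1 f f^{-1},\ldots,a_n f f^{-1})$, i.e.\ $\neg R^A(a_1,\ldots,a_n)$, contradicting the assumption. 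Hence $R^B(a_1 f,\ldots,a_n f)$ holds and $f$ is a monomorphism.

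I do not anticipate a genuine obstacle here: the proof is a routine unwinding of definitions, and the only thing to handle carefully is the bookkeeping of which composition ($f^{-1}f$ versus $ff^{-1}$) acts as the identity on which structure, so that the substituted tuples cancel correctly. The symmetry between the two directions means the second half is a verbatim dual of the first with the roles of $R^A/R^B$ and the implication directions swapped, so I would write it compactly. The one subtlety worth a sentence is noting explicitly that bijectivity of $f$ guarantees both that $f^{-1}$ is well-defined and injective and that every tuple over the target can be pulled back, which is what licenses the contrapositive substitutions.
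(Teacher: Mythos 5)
Your proposal is correct and uses essentially the same argument as the paper: the paper also works with the contrapositive, exhibiting a witness tuple $\bar{a}$ with $R^A(\bar{a})$ but $\neg R^B(\bar{a}f)$ and cancelling via $\bar{a}ff^{-1}=\bar{a}$, exactly as in your substitution steps. The only difference is organizational --- the paper compresses both directions into one chain of equivalences starting from ``$f$ is not a monomorphism,'' while you write out the two contrapositive directions separately.
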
 

\begin{proof} 
Assume that $f:A\to B$ is a bijection that is not a monomorphism. This happens if and only if there is some $n$-ary relation $R_i$ and some $n$-tuple $\bar{a}$ of $A$ where $R_i^A(\bar{a})$ holds but $R_i^B(\bar{a}f)$ does not hold. As $f$ is bijective, $f^{-1}$ exists and $\bar{a}ff^{-1} = \bar{a}$. Since $R_i^A(\bar{a})$ holds, this means that the non-relation $R_i^B(\bar{a}f)$ is not preserved by $f^{-1}$ and so $f^{-1}$ is not an antimonomorphism.
\end{proof}

\begin{remarks} 
Following this lemma, if $f:A\to B$ is a bijective homomorphism, we write $f^{-1} = \bar{f}: B\to A$ in order to emphasise that the inverse of $f$ is an antimonomorphism. Similarly, if $\bar{f}:B\to A$ is a bijective antimonomorphism, we write $\bar{f}^{-1} = f: A\to B$ to emphasise that the inverse of $\bar{f}$ is a monomorphism. The context for when we use this notation should be clear.

By restricting the codomain of a monomorphism $f:A\rarr{}B$ to the image, we see that $f':A\rarr{}Af$ is a bijective homomorphism and therefore $\bar{f}':Af\rarr{}A$ is an antimonomorphism by the above lemma. Similarly, by restricting the codomain of an antimonomorphism $\bar{f}:B\rarr{}A$ to the image, we see that $\bar{f}':B\rarr{}B\bar{f}$ is a bijective antimonomorphism and so we obtain a bijective homomorphism $f':B\bar{f}\rarr{}B$.

It is an immediate corollary of this result that if $A,B,C$ are $\sigma$-structures, and $f:A\to B$, $g:B\to C$ are bijective homomorphisms, then $(fg)^{-1} = g^{-1}f^{-1}: C\to A$ is a bijective antimonomorphism.  
\end{remarks}

We use antimonomorphisms to express the backward extension in the \emph{antimono-extension property} (AMEP):

\begin{quotation} (AMEP) For all $A,B\in$ Age$(\mc{M})$ with $A\subseteq B$ and antimonomorphism $\bar{f}:A\rarr{}\mc{M}$, there exists a antimonomorphism $\bar{g}:B\rarr{}\mc{M}$ extending $\bar{f}$.
\end{quotation}

These properties prove to be necessary and sufficient for MB-homogeneity. It is easy to show that for any finite substructure $B\subseteq \mc{M}$ of a countable structure $\mc{M}$ and $\alpha\in$ Bi$(\mc{M})$, there exists a finite substructure $C\subset \mc{M}$ such that $C\alpha = B$.

\begin{proposition}\label{mepbep} Let $\mc{M}$ be a countable structure. Then $\mc{M}$ is MB-homogeneous if and only if $\mc{M}$ has the AMEP and the MEP.
\end{proposition}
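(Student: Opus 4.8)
The plan is to prove each direction separately, using the ``back and forth'' structure that the two extension properties are designed to capture. For the easier direction, I would assume $\mc{M}$ is MB-homogeneous and deduce both the MEP and the AMEP. The MEP follows immediately from the fact that MB-homogeneity implies MM-homogeneity (via Proposition~4.1(b) of \cite{cameron2006homomorphism}), as already noted in the text. For the AMEP, suppose $A \subseteq B$ are in $\mathrm{Age}(\mc{M})$ and $\bar{f} : A \to \mc{M}$ is an antimonomorphism. Using the remarks following \autoref{antimono}, I would pass to the image: restricting the codomain gives a bijective antimonomorphism onto $A\bar{f}$, whose inverse $f : A\bar{f} \to A$ is a monomorphism between finite substructures of $\mc{M}$. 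By MB-homogeneity, $f$ extends to a bimorphism $\alpha \in \mathrm{Bi}(\mc{M})$. Then $\alpha^{-1} = \bar\alpha$ is a bijective antimonomorphism of $\mc{M}$ (again by \autoref{antimono}), and its restriction to $B$ is an antimonomorphism $\bar{g} : B \to \mc{M}$ extending $\bar{f}$, as required.

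For the converse, I would assume $\mc{M}$ has both the MEP and the AMEP and show it is MB-homogeneous. This is the heart of the argument and is a genuine back-and-forth construction. Given a finite partial monomorphism $f_0 : A \to \mc{M}$ between finite substructures, the goal is to build a bimorphism $\alpha$ of $\mc{M}$ extending $f_0$. I would fix an enumeration $M = \{x_1, x_2, \dots\}$ of the domain and construct $\alpha$ as the union of an increasing chain of finite partial maps, alternating between two kinds of steps. At an \emph{odd} (forth) step, I ensure surjectivity of the domain by using the MEP to extend the current monomorphism so that the next unused point $x_k$ lies in its domain; at an \emph{even} (back) step, I ensure surjectivity of the image by using the AMEP so that the next unused point $x_k$ lies in the range. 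The subtlety, and the point where the AMEP is needed rather than a second application of the MEP, is that to put $x_k$ into the \emph{image} we must reason about the inverse map: the current partial map $g$ has an inverse $\bar g$ which is an antimonomorphism, the AMEP lets us extend $\bar g$ to cover $x_k$ in its domain, and inverting again (via \autoref{antimono}) yields an extended monomorphism whose image now contains $x_k$.

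The main obstacle, and the step I would treat most carefully, is verifying that these two alternating extension steps are mutually compatible and together force $\alpha$ to be a bijective endomorphism. Concretely, I must check three things: that at each stage the partial map remains a monomorphism (so its inverse remains an antimonomorphism, keeping both extension properties applicable), that the domain-filling and image-filling steps can be interleaved without one undoing the other, and that the limiting map $\alpha = \bigcup_n f_n$ is well-defined and total. The forth steps guarantee $\mathrm{dom}(\alpha) = M$ and the back steps guarantee $\mathrm{im}(\alpha) = M$, so $\alpha$ is a bijection of $M$; since each $f_n$ is a monomorphism and the monomorphism condition is determined by finite tuples, $\alpha$ preserves all relations and is therefore a bimorphism. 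Finally, since $f_0 \subseteq \alpha$, the bimorphism extends the original partial monomorphism, establishing MB-homogeneity.

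I expect the bookkeeping in the back step to be where errors could creep in: one must consistently apply \autoref{antimono} to translate between ``$x_k$ lies in the image of the monomorphism'' and ``$x_k$ lies in the domain of the antimonomorphism $\bar g$,'' and confirm that extending $\bar g$ via the AMEP and re-inverting genuinely extends the previous monomorphism (i.e., agrees with it on the old domain). The remark that any finite substructure $B \subseteq \mc{M}$ has a finite preimage under any $\alpha \in \mathrm{Bi}(\mc{M})$ is the finitary fact that makes the AMEP the correct backward analogue, and I would invoke it to ensure the structures appearing in the back step remain in $\mathrm{Age}(\mc{M})$.
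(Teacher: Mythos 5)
Your proposal is correct and follows essentially the same route as the paper's proof: the forward direction gets the MEP from MM-homogeneity and the AMEP by inverting $\bar{f}$ to a monomorphism via \autoref{antimono}, extending that to a bimorphism $\alpha$, and restricting $\bar{\alpha}=\alpha^{-1}$ to $B$; the converse is the same back-and-forth construction, using the MEP at ``forth'' steps and the AMEP (applied to the inverse antimonomorphism and re-inverted) at ``back'' steps, with the union of the chain giving the desired bimorphism.
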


\begin{proof}

Suppose that $\mc{M}$ is MB-homogeneous. Then $\mc{M}$ is also MM-homogeneous and has the MEP from Proposition 4.1(a) of \cite{cameron2006homomorphism}. 
Let $A\subseteq B\in$ Age$(\mc{M})$ and $\bar{f}:A\to\mc{M}$ be an antimonomorphism. 
Without loss of generality, we can assume that $A\subseteq B\subset\mc{M}$.
Restrict the codomain of $\bar{f}$ to its image to find a bijective antimonomorphism $\bar{f}':A\to A\bar{f}$ between finite substructures of $\mc{M}$.
By \autoref{antimono}, $f': A\bar{f}\to A$ is a monomorphism; as $\mc{M}$ is MB-homogeneous, extend $f'$ to a bimorphism $\alpha$ of $\mc{M}$. 
From the remark above, there exists a $C\subset \mc{M}$ such that $\alpha|_{C} = g:C\to B$ is a bijective homomorphism. By \autoref{antimono}, define $\bar{g}:B\to C$ to be an antimonomorphism. 
It remains to show that $\bar{g}$ extends $\bar{f}$. Indeed, for $a\in A$ we have that \[a\bar{f} = a\bar{f}\alpha\bar{\alpha} = a\bar{f}f'\bar{\alpha} = a\bar{\alpha} = a\bar{g}\] as $\alpha$ extends $f'$ and $\bar{\alpha}$ extends $g$. Therefore $\mc{M}$ has the AMEP.

Conversely, suppose that $\mc{M}$ has the AMEP and the MEP, and let $f:A\rarr{}B$ be a monomorphism between finite substructures of $\mc{M}$. We shall extend $f:A\to B$ to a bimorphism $\alpha$ of $\mc{M}$ in stages using a back and forth argument. Set $A_0 = A, B_0 = B$ and $f_0 = f$. At a typical stage, we have a bijective monomorphism $f_k:A_k\rarr{}B_k$ extending $f$, where $A_i\subseteq A_{i+1}$ and $B_i\subseteq B_{i+1}$ for all $i\leq k$. As $\mc{M}$ is countable, we can enumerate the points $M = \{m_0,m_1,\ldots\}$. When $k$ is even, pick a point $m_i$, where $i$ is the smallest number such that $m_i\notin$ dom $f$. So, $A_k\cup \{m_i\}$ is a substructure of $\mc{M}$ containing $A_k$. Using the MEP, extend $f_k$ to a monomorphism $f'_{k+1}:A_k\cup\{m_i\}\rarr{}\mc{M}$. By restricting the codomain of $f_{k+1}'$ we have that $f_{k+1}:A_k\cup\{m_i\}\rarr{}B_k\cup\{m_if'_{k+1}\}$ is a bijective homomorphism extending $f_k$. If $k$ is odd, note that $\bar{f}_k:B_k\rarr{}A_k$ is a bijective antimonomorphism by \autoref{antimono}. Select a point $m_i$, where $i$ is the least number such that $m_i\notin$ dom $\bar{f}_k$; so $B_k\cup\{m_i\}$ is a substructure of $\mc{M}$. Using the AMEP, extend $\bar{f}_k$ to an antimonomorphism $\bar{f}'_{k+1}:B_k\cup\{m_i\}\rarr{}\mc{M}$. Again, by restricting the codomain we obtain a bijective antimonomorphism $\bar{f}_{k+1}:B_k\cup\{m_i\}\rarr{}A_k\cup\{m_i\bar{f}'_{k+1}\}$ extending $\bar{f}_k$; so $f_{k+1}:A_k\cup\{m_i\bar{f}'_{k+1}\}\rarr{}B_k\cup\{m_i\}$ is a bijective homomorphism extending $f_k$. By ensuring that every $m_i$ appears at both an odd and even stage, countably many applications of this procedure yield a bimorphism $\alpha$ of $\mc{M}$ extending $f$.
\end{proof}

\begin{remark} We note here that this process of extending one point at a time eventually creates a bimorphism. In a similar fashion to Dolinka \cite[Section 3]{dolinka2014bergman} we can present equivalent conditions to \autoref{mepbep} using one point extensions. 

\begin{quote}(1PAMEP/1PMEP) Suppose that $A\subseteq B\in$ Age$(\mc{M})$ and there is a (anti)monomorphism $f:A\rarr{}\mc{M}$. If $|B\smallsetminus A| = 1$ then there exists a (anti)monomorphism $g:B\rarr{}M$ extending $f$.
\end{quote}
A straightforward proof by induction shows that a structure $\mc{M}$ has both these conditions, if and only if it also has the AMEP and MEP. These easier-to-handle properties are useful in determining whether or not a structure is MB-homogeneous.
\end{remark}

We now turn our attention to the actual process of construction. As we also need to ensure MM-homogeneity, it would make sense to take our ``forth" amalgamation condition to be the \emph{mono-amalgamation property} introduced in \cite{cameron2006homomorphism}.

\begin{quotation}(MAP) Let $\ms{C}$ be a class of finite structures. For any $A,B_1,B_2\in\ms{C}$ and any maps $f_i:A\rarr{}B_i$ (for $i = 1,2$) such that $f_1$ is a monomorphism and $f_2$ is an embedding, there exists $C\in\ms{C}$ and monomorphisms $g_i:B_i\rarr{}C$ (for $i=1,2$) such that $f_1g_1 = f_2g_2$ and $g_1$ is an embedding.
\end{quotation}

Similar to the AMEP, we use antimonomorphisms to set out the ``back" amalgamation condition, which we call the \emph{antimono-amalgamation property} (AMAP).

\begin{quotation}(AMAP, see \autoref{dbxyap}) Let $\ms{C}$ be a class of finite structures. For any elements $A,B_1,B_2\in\ms{C}$, antimonomorphism $\bar{f}_1:A\rarr{}B_1$ and embedding $f_2:A\rarr{}B_2$, there exists $D\in\ms{C}$, embedding $g_1:B_1\rarr{}D$ and antimonomorphism $\bar{g}_2:B_2\rarr{}D$ such that $\bar{f}_1g_1 = f_2\bar{g}_2$.
\end{quotation}

\begin{figure}[h]
\centering
\begin{tikzpicture}[node distance=2cm]
\node(D) {$\exists D$};
\node(R) [below right of=D] {$B_2$};
\node(L) [below left of=D] {$B_1$};
\node(H) [below right of=L] {$A$};
\path[->,thick,dotted] (R) edge node[above right] {$\bar{g}_2$} (D);
\path[right hook->,thick,dotted] (L) edge node[above left] {$g_1$} (D);
\path[right hook->] (H) edge node[below right] {$f_2$} (R);
\path[->] (H) edge node[below left] {$\bar{f}_1$} (L);
\end{tikzpicture}
\caption{The antimono-amalgamation property (AMAP)}\label{dbxyap}
\end{figure}
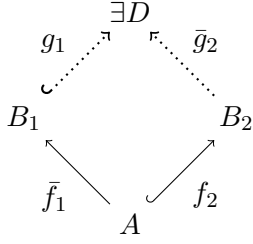

We have enough now to prove the first of our propositions that detail our Fra\"{i}ss\'{e}-like construction.

\begin{proposition}\label{mapbap} Let $\mc{M}$ be an MB-homogeneous structure. Then Age$(\mc{M})$ has the MAP and the AMAP.
\end{proposition}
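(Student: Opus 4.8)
The plan is to establish the two amalgamation properties separately, using the extension properties from \autoref{mepbep} as the engine and the ambient MB-homogeneous structure $\mc{M}$ as the amalgam. Since $\mc{M}$ is MB-homogeneous, \autoref{mepbep} tells us it has both the MEP and the AMEP; the strategy is to realise all the given finite structures concretely inside $\mc{M}$ and then amalgamate there, reading off the required maps $C$ (respectively $D$) as finite substructures of $\mc{M}$. The MAP part is essentially already done in \cite{cameron2006homomorphism} (since MB-homogeneity implies MM-homogeneity, and MM-homogeneous structures have the MAP on their age), so I would state that and focus the work on the AMAP.

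For the AMAP, I would set up the diagram as follows. We are given $A, B_1, B_2 \in$ Age$(\mc{M})$, an antimonomorphism $\bar{f}_1 : A \rarr{} B_1$, and an embedding $f_2 : A \rarr{} B_2$. First I would realise $B_2$ (and hence, via the embedding $f_2$, a copy of $A$) as a substructure of $\mc{M}$; without loss of generality take $f_2$ to be an inclusion, so $A \subseteq B_2 \subseteq \mc{M}$. Now the antimonomorphism $\bar{f}_1 : A \rarr{} B_1$ is, after restricting its codomain to its image, a bijective antimonomorphism $A \rarr{} A\bar{f}_1$, so by \autoref{antimono} its inverse $f_1 : A\bar{f}_1 \rarr{} A$ is a monomorphism between finite substructures of $\mc{M}$ (we may arrange $B_1$ to sit inside $\mc{M}$ too, with $A\bar{f}_1 \subseteq B_1$). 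The goal is to extend the antimonomorphism $\bar{f}_1$, defined on $A$, to all of $B_2$. This is exactly what the AMEP provides: since $A \subseteq B_2 \in$ Age$(\mc{M})$ and $\bar{f}_1 : A \rarr{} \mc{M}$ is an antimonomorphism, the AMEP yields an antimonomorphism $\bar{g}_2 : B_2 \rarr{} \mc{M}$ extending $\bar{f}_1$. Let $D$ be a finite substructure of $\mc{M}$ containing both $B_1$ and the image $B_2\bar{g}_2$, take $g_1 : B_1 \rarr{} D$ to be the inclusion (an embedding), and take $\bar{g}_2 : B_2 \rarr{} D$ as just produced (restricting its codomain to $D$). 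Then for $a \in A$ we have $a f_2 \bar{g}_2 = a\bar{g}_2 = a\bar{f}_1 = a\bar{f}_1 g_1$, which is precisely the commuting condition $\bar{f}_1 g_1 = f_2 \bar{g}_2$; and $D \in$ Age$(\mc{M})$ since it is a finite substructure of $\mc{M}$.

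The main obstacle I anticipate is bookkeeping about \emph{which} maps are antimonomorphisms versus embeddings versus bijective homomorphisms, and making sure the codomain restrictions in the remarks following \autoref{antimono} are applied correctly so that everything composes. In particular one must be careful that extending $\bar{f}_1$ (rather than its inverse) is the correct move — the AMEP extends antimonomorphisms, and the ``back'' direction of the amalgamation is naturally phrased in terms of antimonomorphisms, so the types match once we are disciplined about notation. A secondary point to verify is that $A$, $B_1$, $B_2$ may genuinely be taken inside $\mc{M}$: this follows because each lies in Age$(\mc{M})$, so each embeds into $\mc{M}$, and we can use those embeddings to transport the problem into $\mc{M}$ without loss of generality. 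Once the structures live in $\mc{M}$ and the map types are tracked correctly, the AMEP does all the real work, and the verification of the commuting square is the routine computation displayed above.
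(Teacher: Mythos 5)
Your proof is correct, but for the AMAP you take a genuinely different (and shorter) route than the paper. The paper does not invoke the AMEP at all in this argument: it restricts $\bar{f}_1$ to its image, inverts it via \autoref{antimono} to obtain a monomorphism $f_1:A\bar{f}_1\rarr{}A$, extends $f_1$ to a global bimorphism $\alpha\in$ Bi$(\mc{M})$ using MB-homogeneity directly, and then builds the amalgam as the preimage $C$ under $\alpha$ of the structure induced on $B_1\alpha\cup B_2$, taking $g_1:B_1\rarr{}C$ to be the inclusion and $\bar{g}_2 = \bar{\alpha}|_{B_2}$; the commuting square is then verified through $\alpha$ and $\bar{\alpha}$. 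You instead reuse \autoref{mepbep}: MB-homogeneity gives the AMEP, and since $\bar{f}_1:A\rarr{}B_1\subseteq\mc{M}$ is an antimonomorphism into $\mc{M}$ with $A\subseteq B_2\in$ Age$(\mc{M})$, the AMEP hands you $\bar{g}_2:B_2\rarr{}\mc{M}$ extending $\bar{f}_1$, and the amalgam $D$ is simply the substructure induced on $B_1\cup B_2\bar{g}_2$. Since \autoref{mepbep} precedes this proposition, there is no circularity, and your argument is legitimate; indeed it isolates the observation that the AMAP for Age$(\mc{M})$ is a formal consequence of the AMEP together with the fact that members of the age can be realised as substructures of $\mc{M}$. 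What the paper's route buys is a self-contained demonstration of how a global bimorphism itself produces the amalgam --- essentially inlining the proof of the AMEP --- in parallel with the Cameron--Ne\v{s}et\v{r}il derivation of the MAP from MM-homogeneity; what your route buys is brevity and the avoidance of the preimage bookkeeping (defining $C$ by $C\alpha = D$ and checking $B_1\subseteq C$). Both proofs handle the MAP half identically, by citing Proposition 4.1(b) of Cameron and Ne\v{s}et\v{r}il; one cosmetic remark is that the monomorphism $f_1$ you introduce when setting up is never actually used in your argument and can be deleted.
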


\begin{proof} As $\mc{M}$ is MB-homogeneous it is necessarily MM-homogeneous; by Proposition 4.1(b) of \cite{cameron2006homomorphism}, Age$(\mc{M})$ has the MAP.

Suppose then that $A,B_1,B_2$ are structures in Age$(\mc{M})$, and assume we have an antimonomorphism $\bar{f}_1:A\rarr{}B_1$ and an embedding $f_2:A\rarr{}B_2$. Without loss of generality, further assume that $A,B_1,B_2\subseteq \mc{M}$ and that $f_2$ is the inclusion map. From \autoref{antimono}, $\bar{f}_1$ induces a bijective monomorphism $f_1:A\bar{f}_1\rarr{}A$. As $\mc{M}$ is MB-homogeneous we extend $f_1$ to some $\alpha\in$ Bi$(\mc{M})$. Restricting $\alpha$ to $B_1$ yields a bijective monomorphism $\alpha|_{B_1}:B_1\rarr{}B_1\alpha$, where $B_1\alpha \supseteq A$. Let $D$ be the structure induced by $\mc{M}$ on $B_1\alpha\cup B_2$. It follows that $A\subseteq D$. As $\alpha$ is surjective, we define $C$ to be the structure such that $C\alpha = D$. Set $g_1:B_1\rarr{}C$ to be the inclusion map. As $\alpha:\mc{M}\rarr{}\mc{M}$ is a bijective homomorphism, then $\bar{\alpha}:\mc{M}\rarr{}\mc{M}$ is an antimonomorphism by \autoref{antimono}. Hence, $\bar{\alpha}|_{B_2}:B_2\rarr{}B_2\bar{\alpha}\subseteq C$ and define $\bar{g}_2:B_2\rarr{} C$ to be this antimonomorphism. It is easy to check that $\bar{f}_1g_1 = f_2\bar{g}_2$ and so Age$(\mc{M})$ has the AMAP.
\end{proof}

For our next result, recall that a class of finite structures $\ms{C}$ has the JEP if for all $A,B\in\ms{C}$ there exists a $D\in\ms{C}$ such that $A$ and $B$ both have embeddings into $D$. 
For a relational structure $\mc{M}$, the \emph{age} Age$(\mc{M})$ of $\mc{M}$ is the class of all finite structures that can be embedded into $\mc{M}$.

\begin{proposition}\label{mbfraisse} If $\ms{C}$ is a class of finite relational structures that is closed under isomorphisms and substructures, has countably many isomorphism types and has the JEP, MAP and AMAP, then there exists an MB-homogeneous structure $\mc{M}$ such that Age$(\mc{M}) = \ms{C}$. 
\end{proposition}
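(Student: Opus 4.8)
The plan is to carry out a Fra\"iss\'e-style chain construction and then invoke \autoref{mepbep}, which reduces MB-homogeneity to verifying the MEP and the AMEP. I would build a countable structure as an increasing union $\mc{M} = \bigcup_{n} \mc{M}_n$ of finite members of $\ms{C}$, where at each stage I enlarge $\mc{M}_n$ to some $\mc{M}_{n+1} \in \ms{C}$ so as to discharge one ``task'' from a master list. There are three kinds of task. First, \emph{age tasks}: for each $B \in \ms{C}$ (countably many up to isomorphism) I apply the JEP to $\mc{M}_n$ and $B$ to obtain $\mc{M}_{n+1} \in \ms{C}$ into which both embed, forcing $B \hookrightarrow \mc{M}$. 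Second, \emph{MEP tasks}: each monomorphism $f \colon A \rarr{} \mc{M}$ with $A \subseteq B$ in $\ms{C}$ has finite image, hence lands in some $\mc{M}_n$, and is discharged by an application of the MAP. Third, \emph{AMEP tasks}: symmetrically, each antimonomorphism $\bar{f} \colon A \rarr{} \mc{M}$ lands in some $\mc{M}_n$ and is discharged by the AMAP.

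The key computation is orienting the amalgamation diagrams so that the old structure embeds into the new one, keeping the chain genuinely increasing, while the extended map is of the required type. For an MEP task, writing $A' = Af \subseteq \mc{M}_n$, I apply the MAP to the apex $A$ with $f_1 \colon A \rarr{} \mc{M}_n$ the (bijective) monomorphism onto $A'$ followed by the inclusion, and $f_2 \colon A \rarr{} B$ the inclusion. The MAP returns $C \in \ms{C}$ with an embedding $g_1 \colon \mc{M}_n \rarr{} C$ and a monomorphism $g_2 \colon B \rarr{} C$ satisfying $f_1 g_1 = f_2 g_2$. Setting $\mc{M}_{n+1} = C$ and identifying $\mc{M}_n$ with $g_1(\mc{M}_n)$, the relation $f_1 g_1 = f_2 g_2$ forces $g_2|_A = f$, so $g_2$ is the required monomorphism extending $f$. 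An AMEP task is handled identically with the AMAP: take $\bar{f}_1 \colon A \rarr{} \mc{M}_n$ to be $\bar{f}$ followed by the inclusion (an antimonomorphism, being the composite of an antimonomorphism and an embedding) and $f_2 \colon A \rarr{} B$ the inclusion; the AMAP returns an embedding $g_1 \colon \mc{M}_n \rarr{} D$ and an antimonomorphism $\bar{g}_2 \colon B \rarr{} D$ with $\bar{f}_1 g_1 = f_2 \bar{g}_2$, and after identification $\bar{g}_2$ extends $\bar{f}$.

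Passing to the limit, I would check that each discharged extension persists: since every $\mc{M}_n$ is an induced substructure of $\mc{M}$, a (anti)monomorphism into $\mc{M}_{n+1}$ is also one into $\mc{M}$, because the relations of $\mc{M}$ restrict correctly to $\mc{M}_{n+1}$. The age tasks, together with closure of $\ms{C}$ under substructures, give $\mathrm{Age}(\mc{M}) = \ms{C}$: each finite substructure of $\mc{M}$ lies inside some $\mc{M}_n \in \ms{C}$ and hence belongs to $\ms{C}$, while every $B \in \ms{C}$ embeds via its age task. Having verified the MEP and the AMEP, \autoref{mepbep} yields that $\mc{M}$ is MB-homogeneous.

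The main obstacle is the \emph{bookkeeping}: I must enumerate all three families of tasks into a single countable schedule and dovetail them so that every task is discharged at some finite stage, even though new tasks keep appearing as the structure grows. Because each $\mc{M}_n$ is finite and $\ms{C}$ has only countably many isomorphism types, at each stage only countably many tasks are ``present'', and a standard diagonal enumeration secures fairness. It is convenient here to replace the MEP and AMEP by their one-point versions (the 1PMEP and 1PAMEP from the remark following \autoref{mepbep}), since this keeps each individual task finite and makes the scheduling transparent.
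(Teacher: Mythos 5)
Your proposal is correct and follows essentially the same route as the paper's proof: an increasing chain of finite structures from $\ms{C}$, with JEP stages securing the age, MAP stages discharging monomorphism tasks, and AMAP stages discharging antimonomorphism tasks (with the same orientation of the amalgamation squares, so that the old structure embeds into the new one and the commuting condition forces the returned map to extend the given one), followed by an appeal to \autoref{mepbep}. The paper implements your ``standard diagonal enumeration'' concretely via bijections $\beta_m$ satisfying $\beta_m(i,j)\geq i$ and handles arbitrary tasks by transferring them to representative pairs from a countable list $S$ via an isomorphism $\theta$, but these are precisely the bookkeeping devices you describe.
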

\begin{proof}
We build $\mc{M}$ as a countable union of finite structures $M_i$ from $\ms{C}$ with $M_k \subseteq M_{k+1}$ for $k \in \mathbb{N}$. 
The sets $M_k$ $(k \in \mathbb{N})$ are defined inductively in the following way. 
Begin by setting $M_0$ to be an arbitrary fixed structure $A \in \ms{C}$.
As the number of isomorphism types in $\ms{C}$ is countable, we can choose a countable set $S$ of pairs $(A,B)$ where $A\subseteq B\in\ms{C}$ such that every pair $A'\subseteq B'\in\ms{C}$ is represented by a pair $(A,B)$ in $S$.  
This means that for every pair $A'\subseteq B'\in\ms{C}$ there is a pair $(A,B) \in S$ and 
an isomorphism from $B$ to $B'$ which restricts to give an isomorphism between $A$ and $A'$. 
Let $[\mathbf{m}] = \{n\in\mb{N}\; :\; n\equiv m\mod{3}\}$, where $m = 1,2$. Define bijections $\beta_m:[\mathbf{m}]\times \mb{N}\to [\mathbf{m}]$ such that $\beta_m(i,j) \geq i$ for $m = 1,2$. 

Assume first that $k\equiv 0\mod{3}$. 
Let $\mathcal{T} = \{T_0,T_3,T_6,\ldots\}$ be a countable subset of $\ms{C}$ such that each member of $\ms{C}$ is isomorphic to a structure from $\mathcal{T}$. Such a set exists since $\ms{C}$ is assumed to have countably many isomorphism types. By JEP there is a structure $D$ in $\ms{C}$ into which both $M_k$ and $T_k$ embed. Take such a structure $D$ and then set $M_{k+1}$ to be equal to $D$.
%
%
%
%
%
%
%
%
%
Now suppose that $k\equiv 1\mod{3}$. Let $L_1 = (A_{kj}, B_{kj}, f_{kj})_{j\in\mb{N}}$ be the list of all triples $(A,B,f)$ such that $(A,B)\in S$ and $f:A\to M_k$ is a monomorphism. This list is countable as $S$ is and there are finitely many monomorphisms from $A$ into $M_k$. 
Let $(i,j) \in [\mathbf{1}] \times \mathbb{N}$ be the unique pair satisfying $k = \beta_1(i,j)$. 
Then as $\beta_1(i,j)\geq i$, the map $f_{ij}:A_{ij}\to M_i\subseteq M_k$ exists. Therefore, we can use the MAP to define $M_{k+1}$ such that $M_k\subseteq M_{k+1}$ and the monomorphism $f_{ij}:A_{ij}\to M_k$ extends to some monomorphism $g_{ij}:B_{ij}\rarr{}M_{k+1}$ (see \autoref{existsmb} (1)). This ensures that every possible mono-amalgamation occurs. If $k\equiv 2\mod{3}$, let $L_2 = (P_{kj}, Q_{kj}, \bar{f}_{kj})_{j\in\mb{N}}$ be the list of all triples $(P,Q,\bar{f})$ such that $(P,Q)\in S$ and $\bar{f}:P\to M_k$ is an antimonomorphism; again, this list is countable. 
Let $(i,j) \in [\mathbf{2}] \times \mathbb{N}$ be the unique pair satisfying $k = \beta_2(i,j)$. 
Since $\beta_2(i,j)\geq i$ it follows that the map $\bar{f}_{ij}:P_{ij}\to M_i\subseteq M_k$ is defined. So we can use the AMAP to define $M_{k+1}$ such that $M_k\subseteq M_{k+1}$ and the antimonomorphism $\bar{f}_{ij}:P_{ij}\to M_k$ extends to some antimonomorphism $\bar{g}_{ij}:Q_{ij}\rarr{}M_{k+1}$ (see \autoref{existsmb} (2)). This construction ensures that every possible antimono-amalgamation occurs. 

\begin{figure}[h]
\centering
\begin{tikzpicture}[node distance=2cm,scale=0.9]
\begin{scope}[xshift=-3cm]
\node(D) at (0,1.5) {$M_{k+1}$};
\node(R) at (1.5,0) {$B_{ij}$};
\node(L) at (-1.5,0) {$M_k$};
\node(H) at (0,-1.5) {$A_{ij}$};
\path[->,thick,dotted] (R) edge node[above right] {$g_{ij}$} (D);
\path[right hook->,thick,dotted] (L) edge node[above left] {$\iota_k$} (D);
\path[right hook->] (H) edge node[below right] {$\iota_{ij}$} (R);
\path[->] (H) edge node[below left] {$f_{ij}$} (L);
\node(A) at (0,-2.5) {(1)};
\end{scope}

\begin{scope}[xshift=3cm]
\node(D) at (0,1.5) {$M_{k+1}$};
\node(R) at (1.5,0) {$Q_{ij}$};
\node(L) at (-1.5,0) {$M_k$};
\node(H) at (0,-1.5) {$P_{ij}$};
\path[->,thick,dotted] (R) edge node[above right] {$\bar{g}_{ij}$} (D);
\path[right hook->,thick,dotted] (L) edge node[above left] {$\iota_k$} (D);
\path[right hook->] (H) edge node[below right] {$\iota_{ij}$} (R);
\path[->] (H) edge node[below left] {$\bar{f}_{ij}$} (L);
\node(B) at (0,-2.5) {(2)};
\end{scope}

\end{tikzpicture}
\caption{Amalgamations performed in the proof of \autoref{mbfraisse}. Here, the $\iota$'s are inclusion mappings.}\label{existsmb}
\end{figure}
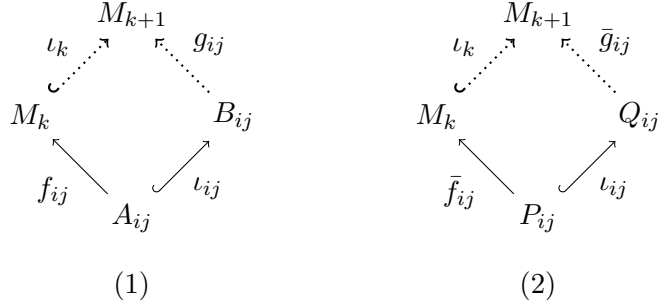

Following this, define $\mc{M} = \bigcup_{k\in\mb{N}}M_k$. All that remains to show is that $\mc{M}$ has age $\ms{C}$ and that $\mc{M}$ is MB-homogeneous. Our construction ensures that every isomorphism type of $\ms{C}$ appears at a 0 mod 3 stage, so every structure from $\ms{C}$ embeds into $\mc{M}$. Conversely, we have that $M_k\in\ms{C}$ for all $k\in\mb{N}$. As $\ms{C}$ is closed under substructures, every structure that embeds into $\mc{M}$ is in $\ms{C}$, showing that Age$(\mc{M}) = \ms{C}$. Now suppose that $A\subseteq B\in\ms{C}$ and $f:A\rarr{}\mc{M}$ is a monomorphism. As $Af$ is finite, it follows that there exists $j\in [\mathbf{1}]$ such that $Af\subseteq M_j$. Furthermore, there exists a triple $(A_{j\ell},B_{j\ell},f_{j\ell})\in L_1$ such that there exists an isomorphism $\theta:B\to B_{j\ell}$ with $A\theta|_A = A_{j,l}$ and $f = \theta|_Af_{j\ell}$. Define $n = \beta_1(j,\ell)$. Since $n \geq j$, it follows that $Af = A_{j\ell}f_{j\ell}\subseteq M_j \subseteq M_n$. Here, $M_{n+1}$ is constructed by monoamalgamating $M_n$ and $B_{j\ell}$ over $A_{j\ell}$. Therefore, this amalgamation provides an extension $g = \theta g_{j\ell}:B\to\mc{M}$ to the monomorphism $f$ (see \autoref{amaldiag} for a diagram). 

This proves that $\mc{M}$ has the MEP. Using a similar argument with $P\subseteq Q\in \ms{C}$ and an antimonomorphism $\bar{f}:P\to\mc{M}$, we can show that $\mc{M}$ has the AMEP. So $\mc{M}$ is MB-homogeneous by \autoref{mepbep}.
\end{proof}

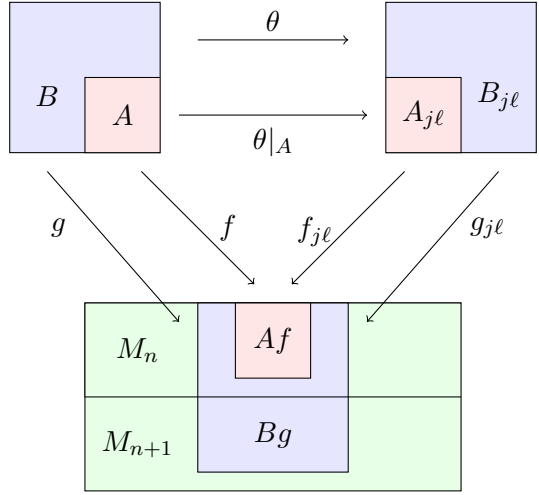
\begin{figure}
\centering
\begin{tikzpicture}[node distance=1.8cm,inner sep=0.7mm,scale=1]

\draw[fill=blue!10!white] (-3.5,3) rectangle (-1.5,1);
\draw[fill=red!10!white] (-2.5,2) rectangle (-1.5,1);
\node (a11) at (-2,1.5) {$A$};
\node (b11) at (-3,1.75) {$B$};

\draw[fill=blue!10!white] (3.5,3) rectangle (1.5,1);
\draw[fill=red!10!white] (2.5,2) rectangle (1.5,1);
\node (a1) at (2,1.5) {$A_{j\ell}$};
\node (b) at (3,1.75) {$B_{j\ell}$};

\draw[->] (-1.25,1.5) -- (1.25,1.5);
\node (e1) at (0,1.15) {$\theta|_A$};

\draw[->] (-1,2.5) -- (1,2.5);
\node (e) at (0,2.75) {$\theta$};

\draw[fill=green!10!white] (-2.5,-1) rectangle (2.5,-2.25);
\draw[fill=green!10!white] (-2.5,-1) rectangle (2.5,-3.5);
\node at (-1.8,-1.625) {$M_n$};
\node at (-1.8,-2.875) {$M_{n+1}$};
\draw[fill=blue!10!white] (-1,-3.25) rectangle (1,-1);
\draw[fill=red!10!white] (-0.5,-2) rectangle (0.5,-1);
\node (a) at (0,-1.5) {$Af$};
\node (b) at (0,-2.75) {$Bg$};
\draw[<-] (0.25,-0.75) -- (1.75,0.75);
\node (alpha) at (0.55,-0.05) {$f_{j\ell}$};
\draw[<-] (1.25,-1.25) -- (3,0.75);
\node (alpha) at (2.85,0) {$g_{j\ell}$};
\draw[<-] (-0.25,-0.75) -- (-1.75,0.75);
\node (f) at (-0.6,0) {$f$};
\draw[<-] (-1.25,-1.25) -- (-3,0.75);
\node (h1alpha) at (-2.85,0) {$g$};
\draw (-2.5,-2.25) -- (2.5,-2.25);
\end{tikzpicture}
\caption{Diagram of maps in the proof of \autoref{mbfraisse}, with colours to illustrate. The map $g = \theta g_{j\ell}:B\to\mc{M}$ is an monomorphism extending $f$, proving that $\mc{M}$ has the MEP.}\label{amaldiag}
\end{figure}

An important consequence of Fra\"{i}ss\'{e}'s original theorem is the fact that any two Fra\"{i}ss\'{e} limits with the same age are isomorphic. While we cannot guarantee that any two structures with the same age constructed in the manner of \autoref{mbfraisse} are isomorphic (see \autoref{s5} for some examples) we can provide a uniqueness condition for this method of construction using a weaker notion of equivalence. Once again, we extend an idea of \cite{cameron2006homomorphism} to achieve this goal.

Let $\mc{M}$ and $\mc{N}$ be two countable structures with the same signature $\sigma$. We say that $\mc{M}$ and $\mc{N}$ are \emph{bi-equivalent} if:
\begin{itemize}
\item Age$(\mc{M}) =$ Age$(\mc{N})$, and;
\item every embedding from a finite structure of $\mc{M}$ into $\mc{N}$ extends to a bijective homomorphism $\alpha:\mc{M}\rarr{}\mc{N}$, and vice versa.
\end{itemize}

Note that bi-equivalence is an equivalence relation on structures with the same signature. Two $\sigma$-structures $\mc{M}$ and $\mc{N}$ can be bi-equivalent without being isomorphic; see \autoref{urexample} for more details. We now show this is the relevant equivalence relation for MB-homogeneity.

\begin{proposition}\label{biequiv} (1) Let $\mc{M},\mc{N}$ be two bi-equivalent structures. If $\mc{M}$ is MB-homogeneous, so is $\mc{N}$.

(2) If $\mc{M},\mc{N}$ are MB-homogeneous and Age$(\mc{M})=$ Age$(\mc{N})$, then $\mc{M}$ and $\mc{N}$ are bi-equivalent.
\end{proposition}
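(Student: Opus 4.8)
The plan is to prove both directions by exploiting the characterisation of MB-homogeneity via the MEP and AMEP (\autoref{mepbep}), together with the definition of bi-equivalence. For part (1), I would take $\mc{N}$ bi-equivalent to an MB-homogeneous $\mc{M}$ and verify that $\mc{N}$ satisfies both extension properties directly. Since $\text{Age}(\mc{M}) = \text{Age}(\mc{N})$, any finite substructures arising in the statements of the MEP and AMEP for $\mc{N}$ also lie in $\text{Age}(\mc{M})$. To check the MEP for $\mc{N}$: given $A \subseteq B \in \text{Age}(\mc{N})$ and a monomorphism $f : A \to \mc{N}$, I would fix an embedding of a finite substructure of $\mc{N}$ containing $Af$ into $\mc{M}$ (using the bi-equivalence clause, which lets me transport a finite embedding to a bijective homomorphism, and in particular provides an embedding $\mc{N} \to \mc{M}$ on finite pieces), push the problem across to $\mc{M}$, solve it there using MB-homogeneity of $\mc{M}$, and transport the solution back via a bijective homomorphism $\mc{M} \to \mc{N}$ supplied by bi-equivalence. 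The key point is that a bijective homomorphism $\alpha : \mc{M} \to \mc{N}$ sends monomorphisms to monomorphisms and (by \autoref{antimono}) its inverse is an antimonomorphism, so it transports both MEP-data and AMEP-data correctly. The AMEP for $\mc{N}$ is handled symmetrically, using that $\bar\alpha = \alpha^{-1}$ is an antimonomorphism to carry antimono-extension problems between the two structures.

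For part (2), I have two MB-homogeneous structures $\mc{M}, \mc{N}$ with a common age, and I must establish bi-equivalence; since the age condition is given, only the extension-of-embeddings clause remains. The natural approach is a back-and-forth construction directly between $\mc{M}$ and $\mc{N}$, mirroring the proof of \autoref{mepbep} but now building a bijective homomorphism across the two structures rather than an endomorphism of one. Starting from an embedding $f_0$ of a finite substructure of $\mc{M}$ into $\mc{N}$, I would enumerate $M = \{m_0, m_1, \ldots\}$ and $N = \{n_0, n_1, \ldots\}$ and alternate. On a ``forth'' step I extend the current finite bijective homomorphism $f_k : A_k \to B_k$ (with $A_k \subseteq \mc{M}$, $B_k \subseteq \mc{N}$) to include the next unused point of $M$: the restriction $f_k$ is a monomorphism from $A_k$ into $\mc{N}$, and since $\text{Age}(\mc{M}) = \text{Age}(\mc{N})$ the structure $A_k \cup \{m_i\}$ lies in $\text{Age}(\mc{N})$, so the MEP of $\mc{N}$ extends $f_k$ over $m_i$, after which I restrict the codomain to recover a bijective homomorphism. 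On a ``back'' step I use \autoref{antimono} to view $\bar{f}_k : B_k \to A_k$ as a bijective antimonomorphism, extend it over the next unused point of $N$ using the AMEP of $\mc{M}$, and again restrict the codomain. Ensuring every point of $M$ and every point of $N$ is eventually captured forces the union $\alpha = \bigcup_k f_k$ to be a bijection $\mc{M} \to \mc{N}$, and it is a homomorphism because each $f_k$ is. The symmetric statement (embeddings from finite substructures of $\mc{N}$ into $\mc{M}$ extending to bijective homomorphisms $\mc{N} \to \mc{M}$) follows by swapping the roles of $\mc{M}$ and $\mc{N}$.

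The main obstacle I anticipate is bookkeeping rather than conceptual: in both parts I must be careful that the maps I transport or extend remain bijective homomorphisms at each finite stage, which requires systematically restricting codomains to images (as in \autoref{mepbep}) and invoking \autoref{antimono} to switch between the monomorphism and antimonomorphism viewpoints at the right moments. In part (1) the subtle step is confirming that bi-equivalence genuinely supplies the finite embeddings and global bijective homomorphisms in \emph{both} directions needed to transport an MEP/AMEP problem to $\mc{M}$ and its solution back to $\mc{N}$; in part (2) the delicate point is that the back-and-forth alternation must use the MEP of the \emph{target} structure on forth steps and the AMEP of the \emph{source} structure on back steps, so that the limit map is simultaneously surjective onto $\mc{N}$ and defined on all of $M$. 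Once these matchings are set up correctly, the arguments run parallel to the proof of \autoref{mepbep} and no genuinely new idea is required.
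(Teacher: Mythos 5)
Your proposal is correct and follows essentially the same route as the paper: part (1) is the same transport argument (push the extension problem into $\mc{M}$ along an embedding of the finite image structure, solve it there by MB-homogeneity, and return via the global bijective homomorphism supplied by bi-equivalence, respectively its antimonomorphism inverse for the AMEP), and part (2) is exactly the paper's back-and-forth, using the MEP of $\mc{N}$ on forth steps and the AMEP of $\mc{M}$ on back steps. The only cosmetic difference is that for the MEP in part (1) the paper simply cites Cameron--Ne\v{s}et\v{r}il's result that mono-equivalence preserves MM-homogeneity instead of redoing the transport by hand, which is what your sketch inlines.
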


\begin{proof} (1) By \autoref{mepbep} it suffices to show that $\mc{N}$ has the MEP and AMEP. If $\mc{M}$ and $\mc{N}$ are bi-equivalent, they are certainly mono-equivalent in the sense of Proposition 4.2 of \cite{cameron2006homomorphism}; as $\mc{M}$ is MM-homogeneous, so is $\mc{N}$ (by the same result of \cite{cameron2006homomorphism}) and thus $\mc{N}$ has the MEP.

Suppose then that $A\subseteq B\in$ Age$(\mc{N})$ and there exists an antimonomorphism $\bar{f}:A\rarr{}A'\subseteq\mc{N}$. Note that $A$ need not be isomorphic to $A'$. As Age$(\mc{M})=$ Age$(\mc{N})$ there exists a copy $A''$ of $A'$ in $\mc{M}$. Fix an isomorphism $e:A'\to A''$ between the two. Therefore, $e$ is a isomorphism from a finite structure of $\mc{N}$ into $\mc{M}$. Since the two are bi-equivalent, we extend this to a bijective homomorphism $\alpha:\mc{N}\to\mc{M}$. This in turn induces a bijective antimonomorphism $ \bar{\alpha}:\mc{M}\rarr{}\mc{N}$ by \autoref{antimono}. Now, define an antimonomorphism $\bar{h} = \bar{f}e: A\to A''$; this is an antimonomorphism from $A$ into $\mc{M}$. Since $\mc{M}$ is MB-homogeneous, it has the AMEP by \autoref{mepbep} and so we extend $\bar{h}$ to an antimonomorphism $\bar{h}':B\to \mc{M}$; see \autoref{xyequivdiag} for a diagram of this process. Now, the map $\bar{h}'\bar{\alpha}:B\rarr{}\mc{N}$ is a antimonomorphism; we need to show it extends $\bar{f}$. So, for all $a\in A$, and using the facts that $\alpha$ extends $e$ and $\bar{h}'$ extends $\bar{h} = \bar{f}e$, we have that \[a\bar{f} = a\bar{f}\alpha\bar{\alpha} = a\bar{f}e\bar{\alpha} = a\bar{h}'\bar{\alpha}.\] Therefore $\mc{N}$ has the AMEP.

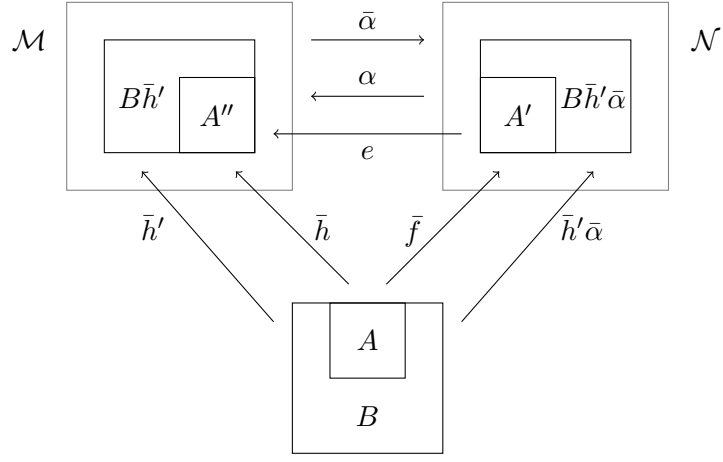
\begin{figure}
\centering
\begin{tikzpicture}[node distance=1.8cm,inner sep=0.7mm,scale=1]

\draw[black!50!white] (-4,0.5) rectangle (-1,3);
\draw[black!50!white] (1,0.5) rectangle (4,3);
\node(m) at (-4.5,2.5) {$\mc{M}$}; 
\node(n) at (4.5,2.5) {$\mc{N}$};

\draw (-3.5,2.5) rectangle (-1.5,1);
\draw (-2.5,2) rectangle (-1.5,1);
\node (a11) at (-2,1.5) {$A''$};
\node (b11) at (-3,1.75) {$B\bar{h}'$};

\draw (3.5,2.5) rectangle (1.5,1);
\draw (2.5,2) rectangle (1.5,1);
\node (a1) at (2,1.5) {$A'$};
\node (b) at (3,1.75) {$B\bar{h}'\bar{\alpha}$};

\draw[->] (-0.75,2.5) -- (0.75,2.5);
\node (alpha) at (0,2.75) {$\bar{\alpha}$};

\draw[<-] (-0.75,1.75) -- (0.75,1.75);
\node (e1) at (0,2) {$\alpha$};

\draw[<-] (-1.25,1.25) -- (1.25,1.25);
\node (e) at (0,1) {$e$};

\draw (-1,-3) rectangle (1,-1);
\draw (-0.5,-2) rectangle (0.5,-1);
\node (a) at (0,-1.5) {$A$};
\node (b) at (0,-2.5) {$B$};
\draw[->] (0.25,-0.75) -- (1.75,0.75);
\node (alpha) at (0.6,-0.05) {$\bar{f}$};
\draw[->] (1.25,-1.25) -- (3,0.75);
\node (alpha) at (2.85,0) {$\bar{h}'\bar{\alpha}$};
\draw[->] (-0.25,-0.75) -- (-1.75,0.75);
\node (f) at (-0.6,0) {$\bar{h}$};
\draw[->] (-1.25,-1.25) -- (-3,0.75);
\node (h1alpha) at (-2.85,0) {$\bar{h}'$};
\end{tikzpicture}
\caption{Diagram of maps in the proof of \autoref{biequiv}. The map $\bar{h}'\bar{\alpha}$ is an antimonomorphism extending $\bar{f}$, proving that $\mc{N}$ has the AMEP.}\label{xyequivdiag}
\end{figure}

(2) Let $f:A\rarr{}B$ be a bijective embedding from a finite structure of $\mc{M}$ into $\mc{N}$. We utilise a back and forth argument constructing the bijection over countably many steps; so set $A_0 = A, B_0 = B$ and $f_0 = f$. At some stage, we have a bijective monomorphism $f_k:A_k\rarr{}B_k$ extending $f$, where $A_i\subseteq A_{i+1}$ and $B_i\subseteq B_{i+1}$ for all $i\leq k$. As both $\mc{M}$ and $\mc{N}$ are countable, we can enumerate the points $M = \{m_0,m_1,\ldots\}$ and $N = \{n_0,n_1,\ldots\}$.

If $k$ is even, select a point $m_i\in\mc{M}\smallsetminus$ dom $f_k$, where $i$ is the smallest natural number such that $m_i\notin$ dom $f_k$. We have that $A_k\cup\{m_i\}\subseteq\mc{M}$ and so is an element of Age$(\mc{N})$ by assumption. As $\mc{N}$ is MB-homogeneous it has the MEP and so $f_k$ can be extended to a monomorphism $f'_{k+1}:A_k\cup\{m_i\}\rarr{}\mc{N}$. Restricting the codomain to the image of $f_{k+1}'$ gives a bijective monomorphism $f_{k+1}:A_k\cup\{m_i\}\rarr{}B_k\cup\{m_if'_{k+1}\}$ extending $f_k$. If $k$ is odd, note that $\bar{f}_k:B_k\rarr{}A_k$ is a bijective antimonomorphism. Select a point $n_j\in\mc{N}\smallsetminus$ dom $\bar{f}_k$, where $j$ is the least natural number such that $n_j\notin$ im $f_k$. Therefore $B_k\cup\{n_j\}\subseteq \mc{N}$ and is an element of Age$(\mc{M})$. As $\mc{M}$ is MB-homogeneous, use the AMEP to extend $\bar{f}_k$ to an antimonomorphism $\bar{f}'_{k+1}:B_k\cup\{n_j\}\rarr{}\mc{M}$. Restricting the codomain to the image once more delivers the required bijective antimonomorphism $\bar{f}_{k+1}:B_k\cup\{n_j\}\rarr{}A_k\cup\{n_j\bar{f}'_{k+1}\}$ extending $\bar{f}_k$. By \autoref{antimono}, $f_{k+1}$ is a bijective homomorphism extending $f_k$ as required. Repeating this process countably many times, ensuring that each $m_i$ is in the domain and each $n_j$ in the image, provides a bijective homomorphism $\alpha:\mc{M}\rarr{}\mc{N}$ extending $f$. The converse direction is entirely analogous.
\end{proof}
 
We conclude the section by observing that due to \autoref{mbfraisse} and \autoref{biequiv}, a MB-homogeneous structure is determined by its age up to bi-equivalence.

\section{MB-homogeneous graphs}\label{s5}

In this section, we focus on MB-homogeneous graphs in more detail. Here, a \emph{graph} $\G$ is a set of \emph{vertices} $V\G$ together with a set of \emph{edges} $E\G$, where this edge set interprets a irreflexive and symmetric binary relation $E$. If $(u,v)\in E\G$, we say that $u$ and $v$ are \emph{adjacent} and write $u\sim_\G v$. 
When it is clear from context in which graph we are working, we will omit the subscript and simply write $u \sim v$. 
We define the \emph{neighbourhood} $N(v)$ of a vertex $v\in V\G$ by $N(v) = \{u\in V\G\; :\; u\sim_\G v\}$. If $(u,v)\notin E\G$, we say that $(u,v)$ is a \emph{non-edge}, that they are \emph{non-adjacent}, and write $u\nsim_\G v$. We say that a vertex $v\in V\G$ is \emph{independent} of a set of vertices $U\subseteq V\G$ if $v\nsim_\G u$ for all $u\in U$. For a graph $\G$, define the \emph{complement} $\bar{\G}$ of $\G$ to be the graph with vertex set $V\bar{\G} = V\G$ and edges given by $u\sim_{\bar{\G}} v$ if and only if $u\nsim_{\G} v$. For $n\in \mb{N}\cup\{\aleph_0\}$, recall that a \emph{complete graph $K^n$ on $n$ vertices} is a graph on vertex set $VK^n$ (with $|VK^n| = n$) with edges given by $u\sim v\in K^n$ if and only if $u\neq v\in VK^n$. The complement of the graph $K^n$ is called the \emph{null graph on $n$ vertices} or an \emph{independent set on $n$ vertices}, and is denoted by $\bar{K}^n$. Say that $\Delta$ is an \emph{induced subgraph} of a graph $\G$ if $V\Delta \subseteq V\G$ and $u\sim_\G v$ if and only if $u\sim_\Delta v$. A graph $\Delta$ is a \emph{spanning subgraph} of $\G$ if $V\Delta = V\G$ and $u\sim_\Delta v$ implies that $u\sim_\G v$. 

A logical place to start our investigation is by demonstrating some properties of MB-homogeneous graphs. 

\begin{proposition}\label{complement} Let $\G$ be an MB-homogeneous graph. Then its complement $\bar{\G}$ is also MB-homogeneous.
\end{proposition}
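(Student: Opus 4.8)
The plan is to reduce everything to the extension-property characterisation of MB-homogeneity given by \autoref{mepbep}, and to exploit the fact that complementation interchanges the roles of monomorphisms and antimonomorphisms. The single observation driving the argument is the following dictionary. Identify the common vertex set $V\G = V\bar{\G}$ and, for a finite graph $A$, write $\bar{A}$ for its complement. Then an injective vertex map $f$ is a monomorphism $A\to\bar{\G}$ if and only if it is an antimonomorphism $\bar{A}\to\G$, and dually $f$ is an antimonomorphism $A\to\bar{\G}$ if and only if it is a monomorphism $\bar{A}\to\G$. This is immediate from the definitions: a monomorphism of $\bar{\G}$ preserves edges of $\bar{\G}$, that is, non-edges of $\G$, which is exactly the antimonomorphism condition for $\G$; since the edge relation is symmetric and irreflexive, no extra cases arise.

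Next I would record the compatible behaviour of ages and of substructure containment. As complementation commutes with passing to induced subgraphs and is an involution, we have $A\in$ Age$(\bar{\G})$ if and only if $\bar{A}\in$ Age$(\G)$, and $A\subseteq B$ (as induced subgraphs) if and only if $\bar{A}\subseteq\bar{B}$. Feeding these equivalences together with the dictionary into the statements of the MEP and AMEP, one sees that the MEP for $\bar{\G}$ is literally the AMEP for $\G$: an extension problem for monomorphisms into $\bar{\G}$ becomes, after complementing every structure in sight, an extension problem for antimonomorphisms into $\G$. Symmetrically, the AMEP for $\bar{\G}$ is the MEP for $\G$.

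To finish: $\G$ is MB-homogeneous, so by \autoref{mepbep} it enjoys both the MEP and the AMEP. By the translation above, the AMEP of $\G$ yields the MEP of $\bar{\G}$ and the MEP of $\G$ yields the AMEP of $\bar{\G}$. Hence $\bar{\G}$ has both extension properties, and a second application of \autoref{mepbep} gives that $\bar{\G}$ is MB-homogeneous.

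I expect no serious obstacle; the only point requiring real care is verifying the mono/antimono dictionary together with the compatibility of ``$\subseteq$'' and ``Age'' with complementation, so that the MEP and AMEP statements for $\bar{\G}$ match those for $\G$ exactly, including the requirement that an extension of $f$ corresponds to an extension of the complemented map. An alternative and essentially equivalent route bypasses \autoref{mepbep}: using \autoref{antimono} one shows directly that every finite antimonomorphism of $\G$ extends to a bijective antimonomorphism of $\G$ (invert it to a finite monomorphism, extend that to a bimorphism by MB-homogeneity of $\G$, then invert back), and then observes that a finite monomorphism of $\bar{\G}$ is precisely a finite antimonomorphism of $\G$ whose bijective antimonomorphic extension is exactly a bimorphism of $\bar{\G}$.
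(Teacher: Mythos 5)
Your proof is correct and follows essentially the same route as the paper's: both arguments invoke \autoref{mepbep} to reduce MB-homogeneity to the MEP and AMEP, and both rest on the observation that complementation interchanges monomorphisms and antimonomorphisms (together with the compatibility of ages with complementation), so that the AMEP of $\G$ becomes the MEP of $\bar{\G}$ and vice versa. The paper phrases the dictionary as ``an antimonomorphism $A\to\G$ is a monomorphism $\bar{A}\to\bar{\G}$'' rather than your dual formulation, but this is the identical argument.
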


\begin{proof} We note that $A\in$ Age$(\G)$ if and only if $\bar{A}\in$ Age$(\bar{\G})$. Since $\G$ is MB-homogeneous, it has the MEP and AMEP by \autoref{mepbep}. Now suppose that $A\subseteq B\in$ Age$(\G)$ and that $\bar{f}:A\rarr{}\G$ is an antimonomorphism. Any such $\bar{f}$ preserves non-edges and may change edges to non-edges; so $\bar{f}:\bar{A}\rarr{}\bar{\G}$ is a monomorphism. As $\G$ has the AMEP, $\bar{f}$ can be extended to an antimonomorphism $\bar{g}:B\rarr{}\G$; this in turn induces a monomorphism $\bar{g}:\bar{B}\rarr{}\bar{\G}$ and hence $\bar{\G}$ has the MEP. The proof that $\bar{\G}$ has the AMEP is similar.
\end{proof}

Following this, we can guarantee that certain subgraphs appear in an MB-homogeneous graph. Cameron and Ne\v{s}et\v{r}il \cite[Proposition 2.5]{cameron2006homomorphism} prove that every infinite, non-null MM-homogeneous graph must contain $K^{\aleph_0}$ as an induced subgraph; we expand this proposition.

\begin{corollary}\label{infcompnull} Any infinite non-complete, non-null MB-homogeneous graph $\G$ contains both $K^{\aleph_0}$ and $\bar{K}^{\aleph_0}$ as induced subgraphs.
\end{corollary}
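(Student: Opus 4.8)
The plan is to reduce both assertions to the cited result of Cameron and Ne\v{s}et\v{r}il (Proposition 2.5 of \cite{cameron2006homomorphism}), which guarantees that any infinite non-null MM-homogeneous graph contains $K^{\aleph_0}$ as an induced subgraph, together with the complement symmetry established in \autoref{complement}.

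First I would record the two observations needed to invoke the cited result for $\G$ itself. Since $\G$ is MB-homogeneous it is in particular MM-homogeneous, so the Cameron--Ne\v{s}et\v{r}il proposition applies to $\G$ directly. Because $\G$ is moreover infinite and non-null, this at once yields an induced copy of $K^{\aleph_0}$ in $\G$, settling the first half.

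For the second half I would pass to the complement. By \autoref{complement}, $\bar{\G}$ is again MB-homogeneous, hence MM-homogeneous, and it is infinite since it shares the vertex set of $\G$. The key point is that $\bar{\G}$ is non-null: an edge of $\bar{\G}$ is exactly a non-edge of $\G$, and such a non-edge exists precisely because $\G$ is assumed non-complete. Applying the Cameron--Ne\v{s}et\v{r}il proposition to $\bar{\G}$ therefore produces an induced copy of $K^{\aleph_0}$ in $\bar{\G}$. Translating back through the complement, an induced complete subgraph of $\bar{\G}$ is an induced null subgraph of $\G$, so $\G$ contains $\bar{K}^{\aleph_0}$ as an induced subgraph, completing the argument.

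Once the complement symmetry is in hand the proof is essentially bookkeeping; the only point requiring genuine care is the correspondence between the hypotheses and their complemented forms -- namely that the assumption ``$\G$ non-complete'' is exactly what ensures ``$\bar{\G}$ non-null'', which is the hypothesis needed to feed $\bar{\G}$ into the cited result. I do not anticipate any real obstacle, since all the substantive content is already packaged in \autoref{complement} and in the Cameron--Ne\v{s}et\v{r}il proposition.
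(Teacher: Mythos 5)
Your proposal is correct and follows essentially the same route as the paper's own proof: both deduce the copy of $K^{\aleph_0}$ from the Cameron--Ne\v{s}et\v{r}il result via MM-homogeneity, and obtain $\bar{K}^{\aleph_0}$ by applying \autoref{complement} and the same result to the complement. Your version merely makes explicit the hypothesis bookkeeping (non-complete $\G$ giving non-null $\bar{\G}$) that the paper leaves implicit.
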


\begin{proof} Any MB-homogeneous graph is necessarily MM-homogeneous and hence it contains $K^{\aleph_0}$ as an induced subgraph from the aforementioned result of \cite{cameron2006homomorphism}. By \autoref{complement} we have that $\bar{\G}$ is also MB-homogeneous and so contains $K^{\aleph_0}$ as an induced subgraph; the result follows from this.
\end{proof}

A consequence of this argument is that an MB-homogeneous graph $\G$ is neither a locally finite graph (where for all $v\in V\G$, there are only finitely many edges involving $v$) nor the complement of a locally finite graph. In fact, we can say more than this. Recall that the \emph{distance} between two vertices $u$ and $v$ in a connected graph $\G$ is the length of the shortest path between $u$ and $v$, and that the \emph{diameter} of $\G$ is the greatest distance between any two vertices of $\G$. The next result is a restatement of \cite[Proposition 1.1(c)]{cameron2006homomorphism}; the proof follows as every MB-homogeneous graph is also an MH-homogeneous graph.

\begin{corollary}[Proposition 1.1(c), \cite{cameron2006homomorphism}] Suppose that $\G$ is a connected MB-homogeneous graph. Then $\G$ has diameter at most 2 and every edge is contained in a triangle.\dne
\end{corollary}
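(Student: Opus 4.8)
The plan is to run everything through the single observation that an MB-homogeneous graph is MH-homogeneous: since every bimorphism is in particular an endomorphism (Bi$(\G)\subseteq$ End$(\G)$), the hypothesis guarantees that every monomorphism between finite induced subgraphs of $\G$ extends to an endomorphism of $\G$. The engine of the proof is a ``folding'' construction built on the cheapest non-trivial configuration: an induced path $p\sim_\G c\sim_\G q$ with $p\nsim_\G q$. First I would check that such a configuration exists whenever $\G$ is not complete: as $\G$ is countably infinite and connected it has a non-adjacent pair $x\nsim_\G y$, and taking a shortest $x$--$y$ path $x=z_0\sim_\G z_1\sim_\G\cdots\sim_\G z_d=y$ (with $d\geq 2$), the vertices $z_0,z_1,z_2$ form exactly such a path, since minimality forces $z_0\nsim_\G z_2$.

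Now fix this triple $p,c,q$. The key point is that the induced subgraph on $\{p,q\}$ carries \emph{no} edge, so for \emph{any} two distinct vertices $u,v$ the map sending $p\mapsto u$, $q\mapsto v$ is a monomorphism of finite induced subgraphs (injective, and edge-preservation is vacuous) --- crucially regardless of whether $\{u,v\}$ is an edge. Extending it to an endomorphism $\phi$ of $\G$ and using that $\phi$ preserves the edges $p\sim_\G c$ and $q\sim_\G c$, I obtain $u=p\phi\sim_\G c\phi$ and $v=q\phi\sim_\G c\phi$; irreflexivity then forces $c\phi\neq u,v$, so $c\phi$ is a common neighbour of $u$ and $v$ distinct from both. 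Applying this with $u\nsim_\G v$ shows any two non-adjacent vertices have a common neighbour, hence the diameter is at most $2$; applying it with $u\sim_\G v$ produces a triangle $\{u,v,c\phi\}$ on the edge $\{u,v\}$, so every edge lies in a triangle.

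It remains to dispose of the degenerate case where $\G$ is complete and the folding configuration is absent: there the diameter is $1\leq 2$, and since $\G$ is infinite any edge extends to a triangle inside $K^{\aleph_0}$. I expect the only genuinely delicate step to be the legitimacy of the folding monomorphism --- specifically, that one is allowed to map an induced \emph{non}-edge onto an \emph{edge}. This is exactly where homomorphism-homogeneity (rather than isomorphism- or embedding-homogeneity) is indispensable: an embedding could never collapse the path $p,c,q$ onto a triangle, whereas an endomorphism may, and it is this asymmetry that drives both conclusions simultaneously. I would also take care to state the shortest-path lemma cleanly, so that the existence of the configuration $p\sim_\G c\sim_\G q$ with $p\nsim_\G q$ is not glossed over.
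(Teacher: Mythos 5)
Your proof is correct. Note, though, that the paper does not actually prove this statement itself: it observes that every MB-homogeneous graph is MH-homogeneous (a bimorphism is in particular an endomorphism) and then simply cites Proposition 1.1(c) of Cameron and Ne\v{s}et\v{r}il, of which the corollary is a restatement. Your argument begins with exactly the same reduction, but then reproves the cited proposition from scratch: the folding device --- map the induced non-edge $\{p,q\}$ of a geodesic $p\sim_\G c\sim_\G q$ onto an arbitrary pair of distinct vertices $\{u,v\}$, which is a legitimate monomorphism precisely because the domain carries no edge, then extend to an endomorphism $\phi$ and use preservation of the edges $pc$, $qc$ plus irreflexivity to get a common neighbour $c\phi\notin\{u,v\}$ --- is essentially the mechanism behind the Cameron--Ne\v{s}et\v{r}il proof. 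So where the paper is content with a citation, you have written a self-contained argument; the paper's version is shorter, while yours has the virtue of isolating exactly where homomorphism-homogeneity (as opposed to ordinary homogeneity) is indispensable, namely that a monomorphism may collapse a non-edge onto an edge, and of treating the degenerate complete case explicitly (harmless, since $\G$ is countably infinite, so $K^{\aleph_0}$ has diameter $1$ and every edge lies in a triangle). One small slip of wording: the existence of a non-adjacent pair follows from non-completeness, not from $\G$ being ``countably infinite and connected''; connectivity is what supplies the geodesic and hence the induced path $p\sim_\G c\sim_\G q$ with $p\nsim_\G q$.
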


We now examine cases where the graph is disconnected (and non-null to avoid triviality). It is shown in \cite{cameron2006homomorphism} that any disconnected MH-homogeneous graph is a disjoint union of complete graphs of all the same size. We use this result in conjunction with \autoref{infcompnull} to see that the only candidates for a disconnected MB-homogeneous graph must be disjoint unions of infinite complete graphs. Building on an observation of \cite{rusinov2010homomorphism} that any disconnected MM-homogeneous graph is a disjoint union of infinite complete graphs, we classify disconnected MB-homogeneous graphs in our next result.

\begin{proposition}\label{dunionkgr} Let $\G = \bigsqcup_{i\in I}K_i$, where $K_i\cong K^{\aleph_0}$ for all $i$ in some index set $I$.

(1) If $I$ is finite with size $n>1$, then $\G$ is MM-homogeneous but not MB-homogeneous.

(2) If $I$ is countably infinite, then $\G$ is MB-homogeneous.
\end{proposition}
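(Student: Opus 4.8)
The plan is to route everything through \autoref{mepbep}, so that deciding MB-homogeneity reduces to checking the MEP and the AMEP, supplemented by a direct description of the bimorphisms of $\G$ for the negative part of (1). Throughout I would work with the one-point versions of the extension properties, since (as noted after \autoref{mepbep}) a countable structure has the MEP/AMEP if and only if it has the corresponding one-point property, and this is the convenient form here.

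For the \emph{MEP} (needed in both parts, and valid for any index set $I$): given $A\subseteq B\in$ Age$(\G)$ with $B=A\cup\{b\}$ and a monomorphism $f:A\to\G$, I extend by choosing an image for $b$. If $b$ is adjacent in $B$ to some vertex of $A$, then the $A$-vertices in the clique of $b$ are pairwise adjacent, so $f$ maps them into a single component $K_j\cong K^{\aleph_0}$; as $K_j$ is infinite and $f(A)$ finite I pick $b'\in K_j\smallsetminus f(A)$ and set $g(b)=b'$, which preserves every edge at $b$ and stays injective. If $b$ is isolated from $A$ in $B$, any $b'\notin f(A)$ works. Hence $\G$ has the MEP for every $I$, so $\G$ is MM-homogeneous; this gives the MM-homogeneity claim in (1) and half of (2).

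For part (2) the crux is the \emph{AMEP}, and this is where the cardinality of $I$ enters. Given $A\subseteq B=A\cup\{b\}$ and an antimonomorphism $\bar f:A\to\G$, I must place $b'=\bar g(b)$ so that $b'\nsim_\G \bar f(a)$ whenever $a\in A$ satisfies $a\nsim_B b$ (an antimonomorphism imposes no condition for the $a$ adjacent to $b$). Setting $S=\bar f(\{a\in A: a\nsim_B b\})$, non-adjacency in $\G$ means lying in a different component, so I need a component of $\G$ that meets neither $S$ nor $\bar f(A)$; since both sets are finite they touch only finitely many components, and as $I$ is countably infinite a fresh component $K_j$ exists, and any $b'\in K_j$ works. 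Together with the MEP, \autoref{mepbep} then yields MB-homogeneity. (One could alternatively deduce the AMEP from the MEP of the complete multipartite complement $\bar\G$ via \autoref{complement}, but the direct argument is cleaner.) I expect this fresh-component step to be the main obstacle: it is exactly what collapses when $I$ is finite, since then $S$ can exhaust all $n$ components, leaving no room for $b'$.

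Finally, for the negative assertion in (1) I would first show that when $|I|=n<\infty$ every bimorphism of $\G$ is an automorphism. Indeed an endomorphism $\alpha$ sends each complete component $K_i$ into a single component $K_{\pi(i)}$ (its image is a clique), and surjectivity of $\alpha$ forces $\pi:\{1,\dots,n\}\to\{1,\dots,n\}$ to be onto, hence a permutation of the finite set; consequently $\alpha$ sends distinct components to distinct components, so it preserves non-edges and lies in Aut$(\G)$. With $n>1$ I then take $u\in K_1$, $v\in K_2$ and distinct $u',v'\in K_1$ and define the partial map $u\mapsto u'$, $v\mapsto v'$; this is a monomorphism since $\{u,v\}$ carries no edge, but any bimorphism extending it would be an automorphism sending the non-edge $uv$ to the edge $u'v'$, a contradiction. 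Hence $\G$ is not MB-homogeneous, completing (1).
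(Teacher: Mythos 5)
Your proof is correct, and its positive halves coincide with the paper's argument: the MEP for arbitrary $I$ by placing the new vertex in the component carrying the image of its clique (or anywhere fresh if it is isolated), and the AMEP for infinite $I$ by choosing a component disjoint from the finite image. In fact your choice $b'\in K_j\smallsetminus f(A)$ is slightly more careful than the paper's $v\in K_i\smallsetminus C_jf$, since the latter could in principle collide with images of other cliques of $A$ landing in the same component. Where you genuinely diverge is the negative assertion in (1). The paper gets it in one line from \autoref{infcompnull}: a non-complete, non-null MB-homogeneous graph must contain $\bar{K}^{\aleph_0}$ as an induced subgraph, whereas $\bigsqcup_{i=1}^n K_i$ has no independent $(n+1)$-set. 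You instead prove directly that for finite $I$ every bimorphism of $\G$ permutes the components (image of each clique is a clique, surjectivity forces the induced map on the finite index set to be a bijection) and hence is an automorphism, and then exhibit a monomorphism sending a non-edge to an edge, which no automorphism can extend. Your route is more elementary and self-contained --- it avoids \autoref{infcompnull}, which itself rests on the Cameron--Ne\v{s}et\v{r}il embedding result and \autoref{complement} --- and it yields a stronger by-product, namely Bi$(\G)=$ Aut$(\G)$ when $I$ is finite, which is precisely the situation (noted after \autoref{defbieq}) in which the bimorphism-equivalence class of $\G$ is a singleton. The paper's route is shorter given the machinery already developed and makes the structural obstruction (the absence of an infinite independent set) more conceptually visible.
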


\begin{proof} In both cases, we note that every $A\in$ Age$(\G)$ can be decomposed as finite disjoint union of finite complete graphs; so we can write that $A = \bigsqcup_{j=1}^k C_j$, where $C_j$ is a complete graph of some finite size.

(1) As $|I|=n$ we note that $k\leq n$ for all $A=\bigsqcup_{j=1}^k C_j$ in the age of $\G$. Suppose that $A\subseteq B\in$ Age$(\G)$ with $B\smallsetminus A = \{b\}$. We have two choices for $b$; either $b$ is completely independent of $A$ or $b$ is related to exactly one $C_j$ for some $1\leq j\leq k$. If it is the former, we can extend any monomorphism $f:A\rarr{}\G$ to a monomorphism $g:B\rarr{}\G$ by sending $b$ to any vertex $v\in V\G\smallsetminus Af$. If it is the latter, then we can extend any monomorphism $f:A\rarr{}\G$ to a monomorphism $g:B\rarr{}\G$ by sending $b$ to a vertex $v\in K_i\smallsetminus C_jf$, where $j$ is defined as above. Hence $\G$ has the MEP and so is MM-homogeneous. However, note that $\G$ does not embed an independent $n+1$-set and so $\G$ is not MB-homogeneous by \autoref{infcompnull}.

(2) The proof that $\G$ is MM-homogeneous when $I$ is infinite is as above. Assume then that $A\subseteq B\in$ Age$(\G)$ with $B\smallsetminus A = \{b\}$, and let $\bar{f}:A\rarr{}\G$ be an antimonomorphism. We note that as $A$ is finite then $A\bar{f}$ is finite; since $I$ is infinite, there will always exist $i\in I$ such that $K_i\;\cap A\bar{f} = \emptyset$. Therefore, regardless of how $b$ is related to $A$, we can extend $\bar{f}$ to an antimonomorphism $\bar{g}:B\rarr{}\G$ by mapping $b$ to some $v\in K_i$, where $i$ is as stated above. Hence $\G$ is MB-homogeneous by \autoref{mepbep}.
\end{proof}

\begin{remark} We note that from \autoref{dunionkgr} and \autoref{complement} that the complement of $\bigsqcup_{i\in\mb{N}}K_i^{\aleph_0}$, which is the complete multipartite graph with infinitely many partitions each of infinite size, is also MB-homogeneous.
\end{remark}

The proof that $\G = \bigsqcup_{i\in\mb{N}}K_i^{\aleph_0}$ is MB-homogeneous relied on the existence of an independent element to every image of a finite antimonomorphism. Our aim now is to obtain some sufficient conditions for MB-homogeneity along these lines in order to construct some new examples.

\begin{definition}\label{deftrtf} Let $\G$ be an infinite graph.
\begin{itemize}
\item Say that $\G$ has \emph{property $(\triangle)$} if for every finite set $U\subseteq V\G$ there exists $u\in V\G$ such that $u$ is adjacent to every member of $U$.
\item Say that $\G$ has \emph{property $(\therefore)$} if for every finite set $V\subseteq V\G$ there exists $v\in V\G$ such that $v$ is non-adjacent to every member of $V$.
\end{itemize}
(See \autoref{figtrtf} for a diagram.)
\end{definition}

\begin{figure}[h]
\centering
\begin{tikzpicture}[node distance=1.8cm,inner sep=0.7mm,scale=0.8]
\title{Intermediate monoids of a first order structure}
\draw (-5,-3) rectangle (5,2.5);
\node(C) at (0,-2.5) {$\G$};
\node(tr) at (-2.5,2) {property $\tr$};
\node(U5) at (-1.5,0) [circle,draw] {};
\node(U4) at (-2,0) [circle,draw] {};
\node(U3) at (-2.5,0) [circle,draw] {};
\node(U2) at (-3,0) [circle,draw] {};
\node(U1) at (-3.5,0) [circle,draw] {};
\node(U) at (-4,1) {$U$};
\draw(U3) ellipse (1.8cm and 0.5cm);
\node(x) at (-2.5,-2) [circle,draw,label=below:$u$] {};
\draw (x) -- (U5);
\draw (x) -- (U4);
\draw (x) -- (U3);
\draw (x) -- (U2);
\draw (x) -- (U1);
\node(tr) at (2.5,2) {property $\tf$};
\node(V1) at (1.5,0) [circle,draw] {};
\node(V2) at (2.5,0) [circle,draw] {};
\node(V3) at (3.5,0) [circle,draw] {};
\node(V) at (4,1) {$V$};
\draw(V2) ellipse (1.8cm and 0.5cm);
\node(v) at (2.5,-2) [circle,draw,label=below:$v$] {};
\draw[dotted] (v) -- (V1);
\draw[dotted] (v) -- (V2);
\draw[dotted] (v) -- (V3);
\end{tikzpicture}
\caption{A diagram of \autoref{deftrtf}}\label{figtrtf}
\end{figure}

We note here that a graph $\G$ has property $\tr$ if and only if $\G$ is \emph{primitive positive (pp)-closed} (also \emph{algebraically closed} as defined in \cite{dolinka2014automorphism}). Due to the self-complementary nature of these properties, if $\G$ has property $\tf$ then its complement $\G$ is a pp-closed graph. These properties prove to be sufficient for MB-homogeneity.

\begin{proposition}\label{trtfmb} Let $\G$ be an infinite graph. If $\G$ has both properties $\tr$ and $\tf$ then $\G$ is MB-homogeneous.
\end{proposition}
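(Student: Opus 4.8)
The plan is to verify the two one-point extension conditions from the one-point reformulation (1PMEP/1PAMEP) following \autoref{mepbep}, since these are equivalent to the MEP and the AMEP, and by \autoref{mepbep} the conjunction of the MEP and the AMEP is equivalent to MB-homogeneity. The guiding idea is that property \tr\ is precisely what is needed to carry out a one-point \emph{forward} extension (a monomorphism need only preserve edges), while property \tf\ governs the one-point \emph{backward} extension (an antimonomorphism need only preserve non-edges).

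For the MEP, suppose $A\subseteq B\in$ Age$(\G)$ with $B\smallsetminus A=\{b\}$ and let $f:A\rarr{}\G$ be a monomorphism. I would apply property \tr\ to the finite vertex set $Af$ to obtain a vertex $w\in V\G$ adjacent to every vertex of $Af$. The key point is that irreflexivity of $\G$ forces $w\notin Af$: were $w=af$ for some $a$, then $w$ would be adjacent to itself. Hence extending $f$ by $b\mapsto w$ is injective, and since $w$ is adjacent to all of $Af$, every edge of $B$ incident with $b$ is sent to an edge of $\G$ (adjacency to the images of vertices $a\nsim_B b$ is harmless, as a monomorphism imposes no condition on non-edges); as $f$ already preserves edges inside $A$, this yields a monomorphism $g:B\rarr{}\G$ extending $f$. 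Thus $\G$ has the MEP.

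For the AMEP, the naive symmetric attempt is to take an antimonomorphism $\bar{f}:A\rarr{}\G$, use property \tf\ to find $w$ non-adjacent to every vertex of $A\bar{f}$, and set $b\mapsto w$. This is where the main obstacle lies: unlike the edge case, irreflexivity does \emph{not} secure injectivity, because a vertex is always non-adjacent to itself, so property \tf\ alone does not guarantee $w\notin A\bar{f}$. I would circumvent this by passing to the complement. By \autoref{deftrtf}, $\G$ has property \tf\ if and only if $\bar{\G}$ has property \tr, and, exactly as in the proof of \autoref{complement}, an antimonomorphism $A\rarr{}\G$ is the same datum as a monomorphism $\bar{A}\rarr{}\bar{\G}$, so $\G$ has the AMEP if and only if $\bar{\G}$ has the MEP. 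Applying the MEP argument of the previous paragraph to $\bar{\G}$ (which has property \tr) then delivers the AMEP for $\G$; crucially, injectivity is now recovered, since it is the irreflexivity of $\bar{\G}$ that is invoked.

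Having established both the MEP and the AMEP, \autoref{mepbep} immediately gives that $\G$ is MB-homogeneous. As an illustration, the random graph $R$ satisfies both \tr\ and \tf\ (take $V=\emptyset$, respectively $U=\emptyset$, in its usual extension property) and is therefore MB-homogeneous by this result; note however that \tr\ and \tf\ are only sufficient, not necessary, as $\bigsqcup_{i\in\mb{N}}K_i^{\aleph_0}$ from \autoref{dunionkgr}(2) is MB-homogeneous while failing \tr.
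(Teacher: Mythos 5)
Your overall skeleton is the paper's: reduce to one-point extensions, get the MEP from property~$\tr$, and conclude via \autoref{mepbep}. The MEP half is correct, and your observation that irreflexivity forces the $\tr$-witness off $Af$ (hence injectivity) is a detail the paper leaves implicit. The problem is the AMEP half, where your fix is circular. The obstacle you raise exists only under the literal reading of property~$\tf$, in which the witness $w$ may lie inside the given finite set $V$ (since $w\nsim w$ always holds). But the equivalence you invoke to escape it --- $\G$ has $\tf$ if and only if $\bar{\G}$ has $\tr$ --- is \emph{exactly} the assertion that the $\tf$-witness can be taken outside $V$: since $\bar{\G}$ is again a graph (irreflexive), a vertex $w\in V$ can never witness $\tr$ for $V$ in $\bar{\G}$, as that would require $w\sim_{\bar{\G}}w$. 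So $\tr$ for $\bar{\G}$ is precisely the strong form of $\tf$ for $\G$. Under the reading that makes your obstacle real, the equivalence you cite is false in general: the graph consisting of one isolated vertex together with an infinite clique satisfies the literal reading of $\tf$ (the isolated vertex is a universal witness, being non-adjacent to everything including itself), yet its complement fails $\tr$. Under the intended reading (the diagram accompanying \autoref{deftrtf} places $v$ outside $V$), your obstacle never arises and the paper's direct symmetric argument --- pick $w$ non-adjacent to all of $A\bar{f}$, necessarily outside $A\bar{f}$, and send $b\mapsto w$ --- already suffices. Either way the complement detour does not do the work you assign to it: it is either unnecessary or unjustified.

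If you want an argument robust under the literal reading, the repair uses both hypotheses jointly rather than complementation. Given a finite $V\subseteq V\G$, property~$\tr$ yields $x$ adjacent to every vertex of $V$, and irreflexivity forces $x\notin V$; now apply $\tf$ to $V\cup\{x\}$ to obtain $w$ non-adjacent to everything in $V\cup\{x\}$. If $w\in V$ then $w\sim x$, contradicting $w\nsim x$; and $w\neq x$ when $V\neq\emptyset$, since $x$ is adjacent to each vertex of $V$ while $w$ is non-adjacent to each (the case $V=\emptyset$ is trivial). Hence $w\notin V$, so the $\tf$-witness can in fact always be chosen outside $V$. With this strengthening, the one-point AMEP extension goes through exactly as in your MEP paragraph: mapping $b$ to $w$ is injective and preserves all non-edges incident to $b$, which is all an antimonomorphism must do, and \autoref{mepbep} then finishes the proof as in the paper.
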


\begin{proof} Suppose that $A\subseteq B\in$ Age$(\G)$ with $B\smallsetminus A = \{b\}$, and that $f:A\rarr{}\G$ is a monomorphism. As $A$ is finite, $Af$ is a finite set of vertices in $\G$ and so by property $\tr$ there exists a vertex $v$ of $\G$ such that $v$ is adjacent to every element of $Af$. We have then that $v$ is a potential image point of $b$, and the map $g:B\rarr{}\G$ extending $f$ and sending $b$ to $v$ is a monomorphism; so $\G$ has the MEP. Using property $\tf$ in a similar fashion shows that $\G$ has the AMEP and so is MB-homogeneous by \autoref{mepbep}.
\end{proof}

\begin{remark} The converse of this result is not true. \autoref{dunionkgr} (2) is an example of an MB-homogeneous graph with property $\tf$ but not property $\tr$. Its complement is an example of an MB-homogeneous graph with property $\tr$ but not property $\tf$.
\end{remark}

This shows that any pp-closed graph $\G$ whose complement is also pp-closed is MB-homogeneous. We now present a notion of equivalence that extends an idea of \cite{dolinka2014automorphism}.

\begin{definition}\label{defbieq} Let $\G,\Delta$ be two graphs. We say that $\G$ and $\Delta$ are \emph{equivalent up to bimorphisms} if there exist bimorphisms $\alpha:\G\rarr{}\Delta$ and $\beta:\Delta\rarr{}\G$.
\end{definition}

\begin{remark}
Informally, this definition says that you can start with $\G$, and draw some number (possibly infinite) of extra edges onto $\G$ to get $\Delta$. From there, you can draw some number (possibly infinite) of extra edges onto $\Delta$, to get a graph isomorphic to $\G$. Note that this definition is equivalent to saying that $\G$ and $\Delta$ contain each other as spanning subgraphs.
\end{remark}

This is a weaker version of bi-equivalence introduced in \autoref{biequiv}. Every pair of bi-equivalent graphs are equivalent up to bimorphisms by definition, but the converse is not true; specifically, two graphs that are equivalent up to bimorphisms need not have the same age (see \autoref{beqr} and \autoref{urexample}).  Justifying the name, this is an equivalence relation of graphs (up to isomorphism) which we denote by $\sim_b$. The product $\alpha\beta:\G\rarr{}\G$ of the two bijective homomorphisms induces a bimorphism of $\G$, and $\alpha\beta$ is an automorphism if and only if $\alpha:\G\to \Delta$ and $\beta:\Delta\to\G$ are bijective isomorphisms. If Bi$(\G) =$ Aut$(\G)$, then $[\G]^{\sim_b}$ is a singleton equivalence class. We show that equivalence up to bimorphisms preserves properties $\tr$ and $\tf$.

\begin{proposition}\label{bieqtrtf} Let $\G,\Delta$ be graphs that are equivalent up to bimorphisms via $\alpha:\G\rarr{}\Delta$ and $\beta:\Delta\rarr{}\G$. Then $\G$ has properties $\tr$ and $\tf$ if and only if $\Delta$ does.
\end{proposition}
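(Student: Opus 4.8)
The plan is to exploit the self-complementary duality between properties $\tr$ and $\tf$ together with the fact that bimorphisms are spanning-subgraph inclusions. First I would record the key feature of a bimorphism $\alpha:\G\rarr{}\Delta$: it is a bijection on vertices that preserves edges, so $u\sim_\G v$ implies $u\alpha\sim_\Delta v\alpha$, and hence non-edges in $\Delta$ pull back to non-edges in $\G$ (i.e. $\alpha$ is an antimonomorphism-like inclusion in the reverse direction). In other words $\G$ and $\Delta$ share a common vertex set (after identifying via the bijection) and $E\G\subseteq E\Delta$ up to this identification, with $\beta$ witnessing the reverse spanning inclusion. The symmetry of the statement (``if and only if'') combined with the symmetry of the hypothesis (we have bimorphisms both ways) means it suffices to prove one implication in one direction; the others follow by swapping the roles of $\G,\Delta$ and of $\alpha,\beta$.

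Next I would prove that $\tr$ transfers from $\G$ to $\Delta$. Suppose $\G$ has property $\tr$ and let $U\subseteq V\Delta$ be finite. The plan is to pull $U$ back along $\beta$ to a finite set $U\beta^{-1}\subseteq V\G$ (using that $\beta:\Delta\rarr{}\G$ is a bijection), apply property $\tr$ in $\G$ to obtain a vertex $w\in V\G$ adjacent in $\G$ to every element of $U\beta^{-1}$, and then transport $w$ back to $\Delta$ via $\beta^{-1}$, i.e. set $v = w\beta^{-1}$... but here one must be careful, since $\beta$ only preserves edges in one direction. The cleaner route is to push the set $U$ into $\G$ through the bimorphism that goes \emph{into} $\G$, namely $\beta$, so that adjacencies are created where needed, then read off a common neighbour and map it forward through $\alpha$ (or $\beta$ appropriately) so that edges are preserved into $\Delta$. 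Concretely: given $u\in V\Delta$, the point $u\beta\in V\G$ is well-defined; choose $w\in V\G$ adjacent in $\G$ to all $u\beta$ with $u\in U$, which exists by $\tr$ for $\G$; then $w\alpha^{-1}$ or an appropriate image under a bimorphism $\Delta\rarr{}\G$ gives the required common neighbour in $\Delta$ because edges are preserved forwards. The bookkeeping of which composite ($\alpha$ or $\beta$) sends edges the right way is the only delicate point.

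For property $\tf$, I would observe that $\tf$ for $\G$ is literally $\tr$ for the complement $\bar\G$, and that $\alpha:\G\rarr{}\Delta$ being a bimorphism (edge-preserving bijection) makes $\bar\alpha:\bar\Delta\rarr{}\bar\G$ a bimorphism in the reverse direction, since $\alpha$ preserving edges is equivalent to $\alpha$ preserving non-edges backwards. Thus $\G\sim_b\Delta$ implies $\bar\G\sim_b\bar\Delta$, and the $\tf$ statement for $(\G,\Delta)$ is exactly the $\tr$ statement for $(\bar\G,\bar\Delta)$, already handled. I expect the main obstacle to be keeping straight the directionality of edge-preservation through the two bimorphisms: because a bimorphism can strictly add edges, a common \emph{neighbour} must be produced on the side where one has enough edges and then transported along the map that preserves those edges, not the one that might destroy them. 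Once the correct composite is identified the verification is a one-line adjacency check, and the whole proposition reduces to the single implication plus the complementation symmetry.
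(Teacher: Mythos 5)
Your overall architecture is sound: the symmetry reduction (prove one direction only, then swap $\alpha$ and $\beta$) is valid, and the complementation observation --- that $\tf$ for $\G$ is exactly $\tr$ for $\bar{\G}$, and that inverses of bimorphisms are bimorphisms of the complements in the opposite direction, so that $\G\sim_b\Delta$ implies $\bar{\G}\sim_b\bar{\Delta}$ --- is correct, and in fact slightly slicker than the paper, which handles $\tf$ by a separate direct argument. However, the one step that carries all the content, the transfer of $\tr$ from $\G$ to $\Delta$, is wrong as you have written it. You push the finite set $U\subseteq V\Delta$ forward into $\G$ via $\beta$, obtain by $\tr$ a vertex $w\in V\G$ adjacent to every element of $U\beta$, and then propose to return via ``$w\alpha^{-1}$ or an appropriate image''. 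Neither option works: $w\alpha^{-1}$ is not even defined ($w\in V\G$ while $\alpha^{-1}$ has domain $V\Delta$), and the map that is defined, $w\alpha$, sends $w$ to a vertex adjacent to every element of $U\beta\alpha$ --- not of $U$, since $\beta\alpha$ is a (generally non-identity) bimorphism of $\Delta$ which may have moved $U$. The other return map, $\beta^{-1}$, fails for the opposite reason: it preserves non-edges rather than edges, so $w\sim_\G u\beta$ tells you nothing about whether $w\beta^{-1}\sim_\Delta u$.

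The constraint you needed to isolate is that the round trip $V\Delta\to V\G\to V\Delta$ applied to $U$ must compose to the identity \emph{and} the return map must preserve edges. The only pair satisfying both is $(\alpha^{-1},\alpha)$: take $U\bar{\alpha}=U\alpha^{-1}\subseteq V\G$ (well defined since $\alpha$ is a bijection), use $\tr$ of $\G$ to find $x$ adjacent to all of $U\alpha^{-1}$, and then $x\alpha$ is adjacent to every $u=(u\alpha^{-1})\alpha\in U$ because $\alpha$ preserves edges. This is exactly the paper's argument. Dually, a direct proof of the $\tf$ transfer must use the pair $(\beta,\beta^{-1})$: push $U$ forward via $\beta$, find $w$ independent of $U\beta$, and return via $\beta^{-1}$, which preserves non-edges and satisfies $U\beta\beta^{-1}=U$; so your instinct to route through $\beta$ was the right one for $\tf$, but not for $\tr$. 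With the $\tr$ transfer repaired in this way, your complementation reduction does correctly dispose of $\tf$, and the proposition follows.
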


\begin{proof} 
Assume that $\G$ has properties $\tr$ and $\tf$, and let $Y$ be any subset of $\Delta$. As $\alpha$ is bijective, $Y\bar{\alpha}$ is a finite subset of $\G$; so by property $\tr$ there exists a vertex $x\in \G$ that is adjacent to every element of $Y\bar{\alpha}$. Since $\alpha$ is a monomorphism, $x\alpha$ is adjacent to every element of $Y$ and hence $\Delta$ has property $\tr$. Similarly, $Y\beta$ is a finite subset of $\G$, so property $\tf$ implies that there is $w\in \G$ independent of $Y\beta$. As $\bar{\beta}$ is an antimonomorphism it preserves non-edges and so $w\bar{\beta}$ is a vertex of $\Delta$ independent of $Y$. Therefore, $\Delta$ has property $\tf$. The converse direction is symmetric.
\end{proof}

In fact, we can say more about the connection between equivalence up to bimorphisms and properties $\tr$ and $\tf$. 

\begin{proposition}\label{trtfbieq} If $\G,\Delta$ are two graphs with properties $\tr$ and $\tf$, then $\G$ and $\Delta$ are equivalent up to bimorphisms.
\end{proposition}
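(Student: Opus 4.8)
The plan is to produce the two required bimorphisms $\alpha:\G\to\Delta$ and $\beta:\Delta\to\G$, i.e.\ bijective homomorphisms in each direction. Since the hypothesis is symmetric in $\G$ and $\Delta$ (both graphs enjoy properties $\tr$ and $\tf$), it suffices to construct a single bijective homomorphism $\alpha:\G\to\Delta$; the map $\beta$ is then obtained by running the identical construction with the roles of $\G$ and $\Delta$ interchanged. I would build $\alpha$ by a back-and-forth argument, maintaining at each finite stage an injective, edge-preserving partial map $p$ (a finite partial homomorphism) between the two vertex sets, and arranging that every vertex of $\G$ eventually enters the domain and every vertex of $\Delta$ eventually enters the image.

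The construction rests on two genericity observations. First, using property $\tr$ of $\Delta$: for any finite sets $U,F\subseteq V\Delta$ there is a vertex of $V\Delta\smallsetminus F$ adjacent to every member of $U$. Indeed, applying $\tr$ to the finite set $U\cup F$ produces a vertex $b$ adjacent to all of $U\cup F$; since no vertex is adjacent to itself, $b\notin F$, while $b$ is adjacent to all of $U$ as required. Second, using both properties of $\G$: for any finite sets $W,F\subseteq V\G$ there is a vertex of $V\G\smallsetminus F$ non-adjacent to every member of $W$. This is the delicate point, and it is where I expect the main obstacle to lie: property $\tf$ as stated allows its witness to lie inside the given finite set (a vertex is always non-adjacent to itself), so $\tf$ alone does not supply a \emph{new} vertex. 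Property $\tr$ of $\G$ repairs this. Since $\tr$ gives every singleton a common neighbour, $\G$ has no isolated vertices, so each $f\in F$ has a neighbour $n_f$; applying $\tf$ to the finite set $W\cup\{\,n_f : f\in F\,\}$ yields a vertex $a$ non-adjacent to all of $W$, and $a$ cannot equal any $f\in F$, for then $a$ would be adjacent to $n_f$, contradicting its choice. Hence $a\in V\G\smallsetminus F$ is non-adjacent to all of $W$.

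With these in hand the back-and-forth is routine. Enumerate $V\G=\{x_0,x_1,\dots\}$ and $V\Delta=\{y_0,y_1,\dots\}$. At an even stage, to bring the least unplaced $x_i$ into the domain, I set $U=\{p(a'):a'\in\operatorname{dom}p,\ x_i\sim_\G a'\}$ and, by the first observation applied with $F=\operatorname{im}p$, choose an image outside $\operatorname{im}p$ adjacent to all of $U$; this preserves injectivity and edge-preservation, since the only new homomorphism constraints come from edges at $x_i$. At an odd stage, to bring the least unplaced $y_j$ into the image, I set $W=\{a'\in\operatorname{dom}p: y_j\nsim_\Delta p(a')\}$ and, by the second observation applied with $F=\operatorname{dom}p$, choose a preimage outside $\operatorname{dom}p$ non-adjacent to all of $W$; the contrapositive of edge-preservation then shows the extended map remains a homomorphism. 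The union of this increasing chain of finite partial homomorphisms is a bijection $\alpha:\G\to\Delta$ preserving all edges, hence a bimorphism. Applying the same argument with $\G$ and $\Delta$ swapped yields $\beta:\Delta\to\G$, so $\G$ and $\Delta$ are equivalent up to bimorphisms.
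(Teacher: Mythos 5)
Your proposal is correct and takes essentially the same route as the paper: a back-and-forth construction of a bijective homomorphism $\alpha:\G\to\Delta$ using property $\tr$ of $\Delta$ at forward steps and property $\tf$ of $\G$ at backward steps, with the second bimorphism obtained by the symmetric construction (the paper phrases this as building a bijective antimonomorphism $\bar{\beta}:\G\to\Delta$ and inverting it, which is the same thing). Your explicit argument that the $\tf$-witness can be chosen outside the current finite set (using $\tr$ to rule out witnesses lying inside it, since a vertex is never adjacent to itself) carefully settles an injectivity detail that the paper's proof passes over in silence.
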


\begin{proof}
Assume that $\G,\Delta$ are two graphs with properties $\tr$ and $\tf$. We use a back and forth argument to construct a bijective homomorphism $\alpha:\G\to\Delta$ and a bijective antimonomorphism $\bar{\beta}:\G\to\Delta$, which by \autoref{antimono} will be the inverse of a bijective homomorphism $\beta:\Delta\to\G$. As both $\G$ and $\Delta$ are countable, we can enumerate their vertices as $V\G = \{c_0,c_1,\ldots\}$ and $V\Delta = \{d_0,d_1,\ldots\}$. Let $C_0 = \{c_0\}$ and $D_0 = \{d_0\}$, and set $f_0:C_0 \to D_0$ be the map sending $c_0$ to $d_0$; this is a bimorphism. Assume now that we have extended $f$ to a bimorphism $f_k:C_k\to D_k$, where $C_i$ and $D_i$ are finite and $C_i\subseteq C_{i+1}$ and $D\subseteq D_{i+1}$ for all $0\leq i \leq {k-1}$.

If $k$ is even, select the vertex $c_j\in \G$ where $j$ is the smallest number such that $c_j\notin C_k$. As $\Delta$ has property $\tr$, there exists a vertex $u\in\Delta$ such that $u$ is adjacent to every element of $D_k$. Define a map $f_{k+1}:C_k\cup\{c_j\}\to D_k\cup\{u\}$ sending $c_j$ to $u$ and extending $f$; this map is a bijective homomorphism as any edge from $c_j$ to some element of $C_k$ is preserved.

Now, if $k$ is odd, choose the vertex $d_j\in\Delta$ where $j$ is the smallest number such that $d_j\notin D_k$. As $\G$ has property $\tf$, there exists a vertex $v\in\G$ such that $v$ is independent of every element of $C_k$. Define a map $f_{k+1}:C_k\cup\{v\}\to D_k\cup\{d_j\}$ sending $v$ to $d_j$ and extending $f_{k}$. Then $f_{k+1}$ is a bijective homomorphism, since $f_k$ is a bijective homomorphism and every edge between $c$ and $C_k$ is preserved; because there are none.

Repeating this process infinitely many times, ensuring that each vertex of $\G$ appears at an even stage and each vertex of $\Delta$ appears at an odd stage, defines a bijective homomorphism $\alpha:\G\to\Delta$. We can construct a bijective antimonomorphism $\bar{\beta}:\G\to\Delta$ in a similar fashion by replacing homomorphism with antimonomorphism and using property $\tf$ of $\Delta$ at even steps and property $\tr$ of $\G$ at odd steps. So the converse map $\beta:\Delta\to\G$ is a bijective homomorphism and so $\G$ and $\Delta$ are equivalent up to bimorphisms.
\end{proof}

Recall that the \emph{random graph} $R$ is the countable universal homogeneous graph. It is well known (see \cite{cameron1997random}) that $R$ is characterised up to isomorphism by the following extension property (EP):

\begin{quote} (EP) For any two finite disjoint sets of vertices $U,V$ of $R$ there exists $x\in VR$ such that $x$ is adjacent to every vertex in $U$ and non-adjacent to any vertex in $V$. (see \autoref{epinr}.)
\end{quote}  

\begin{center}
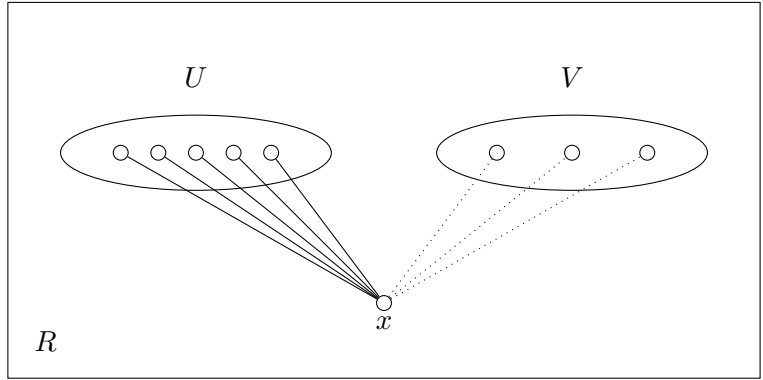
\begin{figure}[h]
\centering
\begin{tikzpicture}[node distance=1.8cm,inner sep=0.7mm,scale=1]
\node(R) at (-4.5,-2.5) {$R$};
\node(C) at (0,0) {};
\node(U5) at (-1.5,0) [circle,draw] {};
\node(U4) at (-2,0) [circle,draw] {};
\node(U3) at (-2.5,0) [circle,draw] {};
\node(U2) at (-3,0) [circle,draw] {};
\node(U1) at (-3.5,0) [circle,draw] {};
\node(V1) at (1.5,0) [circle,draw] {};
\node(V2) at (2.5,0) [circle,draw] {};
\node(V3) at (3.5,0) [circle,draw] {};
\node(U) at (-2.5,1) {$U$};
\node(V) at (2.5,1) {$V$};
\node(x) at (0,-2) [circle,draw,label=below:$x$] {};
\draw(U3) ellipse (1.8cm and 0.5cm);
\draw(V2) ellipse (1.8cm and 0.5cm);
\draw (-5,-3) rectangle (5,2);
\draw (x) -- (U5);
\draw (x) -- (U4);
\draw (x) -- (U3);
\draw (x) -- (U2);
\draw (x) -- (U1);
\draw[dotted] (x) -- (V1);
\draw[dotted] (x) -- (V2);
\draw[dotted] (x) -- (V3);
\end{tikzpicture}
\caption{(EP) in the random graph $R$}\label{epinr}
\end{figure}
\end{center}
\vspace{-0.8em}

The next result extends \cite[Proposition 2.1(i)]{cameron2006homomorphism} and establishes a complementary condition to the graph case of \cite[Corollary 2.2]{dolinka2014automorphism}. 

\begin{corollary}\label{beqr} Suppose that $\G$ is a countable graph. Then $\G$ has properties $\tr$ and $\tf$ if and only if $\G$ and $R$ are equivalent up to bimorphisms.
\end{corollary}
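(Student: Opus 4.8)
The plan is to deduce this corollary directly from the two immediately preceding propositions, once we record the elementary observation that the random graph $R$ itself possesses both properties $\tr$ and $\tf$. This observation is the only new ingredient, and it falls straight out of the extension property (EP): given a finite set $U\subseteq VR$, apply (EP) to the disjoint pair $(U,\emptyset)$ to obtain a vertex $x\in VR$ adjacent to every member of $U$ (the condition on $\emptyset$ being vacuous), so $R$ has property $\tr$; dually, applying (EP) to $(\emptyset,V)$ for a finite $V\subseteq VR$ yields a vertex non-adjacent to every member of $V$, so $R$ has property $\tf$.

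For the forward implication I would assume that $\G$ has properties $\tr$ and $\tf$. Since $R$ also has both properties by the remark above, \autoref{trtfbieq} applies verbatim to the pair $\G$ and $R$, producing bimorphisms $\alpha:\G\to R$ and $\beta:R\to\G$; that is precisely the assertion that $\G$ and $R$ are equivalent up to bimorphisms.

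For the converse I would assume that $\G$ and $R$ are equivalent up to bimorphisms, say via bimorphisms $\alpha:\G\to R$ and $\beta:R\to\G$. Taking $\Delta=R$ in \autoref{bieqtrtf}, the statement ``$\G$ has properties $\tr$ and $\tf$ if and only if $R$ does'' applies; since $R$ does have both properties, so does $\G$.

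I do not expect any genuine obstacle here: all of the substantive work has already been carried out in \autoref{trtfbieq} and \autoref{bieqtrtf}, and the corollary is simply the specialisation of those results to the case $\Delta=R$. The sole point requiring verification is that $R$ satisfies $\tr$ and $\tf$, which, as noted, is an immediate consequence of (EP). The only care needed is to invoke each proposition in the correct direction — \autoref{trtfbieq} to manufacture the bimorphisms, and \autoref{bieqtrtf} to transport the properties back — and to confirm that the vertices supplied by (EP) lie outside the prescribed finite sets, which is automatic since the graph is irreflexive.
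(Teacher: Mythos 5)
Your proof is correct and follows exactly the paper's own argument: the paper likewise notes that $R$ has properties $\tr$ and $\tf$ and then cites \autoref{trtfbieq} for the forward direction and \autoref{bieqtrtf} for the converse. Your only addition is the explicit verification via (EP) that $R$ has both properties, which the paper takes as known; this is a harmless (indeed helpful) elaboration.
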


\begin{proof} 
As $R$ has both properties $\tr$ and $\tf$, the converse direction follows from \autoref{bieqtrtf}, and the forward direction follows from \autoref{trtfbieq}.
\end{proof}

\begin{remark} These three results together show that the equivalence class $[R]^{\sim_b}$ is precisely the set of all countable graphs $\G$ with properties $\tr$ and $\tf$.
\end{remark}

\subsection{Constructing uncountably many examples}

The aim of this subsection is to use \autoref{trtfmb} in order to construct uncountably many MB-homogeneous graphs.

\begin{definition}\label{urexample} Let $P=(p_n)_{n\in\mb{N}_0}$ be an infinite binary sequence. Define the graph $\G(P)$ on the infinite vertex set $V\G(P) = \{v_0,v_1,\ldots\}$ with edge relation $v_i \sim v_j$ if and only if $p_{\text{max}(i,j)}=0$. Say that $\G(P)$ is the graph \emph{determined} by the binary sequence $P$. Furthermore, denote the graph induced by $\G(P)$ on any subset $VX$ of $V\G$ by $\G(X)$.
\end{definition}

If $\G(P)$ is such a graph, we observe that:
\begin{itemize}
\item if $p_i = 0$ then $v_i\sim v_j$ for all natural numbers $j<i$;
\item if $p_i = 1$ then $v_i\nsim v_j$ for all $j<i$;
\end{itemize}
where $<$ is the natural ordering on $\mb{N}$.  An example (where $P = (0,1,0,1,\ldots)$) is given in \autoref{evensodds}.

\begin{center}
\begin{figure}[h]
\centering
\begin{tikzpicture}[node distance=2cm,inner sep=0.7mm,scale=1.5]
\node (0) at (-4,0) [circle,draw,label=below:$v_0$] {};
\node (1) at (-3,0) [circle,draw,label=below:$v_1$] {};
\node (2) at (-2,0) [circle,draw,label=below:$v_2$] {};
\node (3) at (-1,0) [circle,draw,label=below:$v_3$] {};
\node (4) at (0,0) [circle,draw,label=below:$v_4$] {};
\node (5) at (1,0) [circle,draw,label=below:$v_5$] {};
\node (6) at (2,0) [circle,draw,label=below:$v_6$] {};
\node (7) at (3,0) [circle,draw,label=below:$v_7$] {};
\node (8) at (4,0) [circle,draw,label=below:$v_8$] {};
\node (9) at (5,0) {};
\draw (0) to [out=35,in=145] (2);
\draw (1) -- (2);
\draw (0) to [out=325,in=215] (4);
\draw (1) to [out=325,in=215] (4);
\draw (2) to [out=325,in=215] (4);
\draw (3) -- (4);
\draw (0) to [out=35,in=145] (6);
\draw (1) to [out=35,in=145] (6);
\draw (2) to [out=35,in=145] (6);
\draw (3) to [out=35,in=145] (6);
\draw (4) to [out=35,in=145] (6);
\draw (5) -- (6);
\draw (0) to [out=325,in=215] (8);
\draw (1) to [out=325,in=215] (8);
\draw (2) to [out=325,in=215] (8);
\draw (3) to [out=325,in=215] (8);
\draw (4) to [out=325,in=215] (8);
\draw (5) to [out=325,in=215] (8);
\draw (6) to [out=325,in=215] (8);
\draw (7) -- (8);
\draw [thick, loosely dotted] (8) -- (9);
\end{tikzpicture}
\caption{$\G(P)$, with $P = (0,1,0,1,\ldots)$}\label{evensodds}
\end{figure}
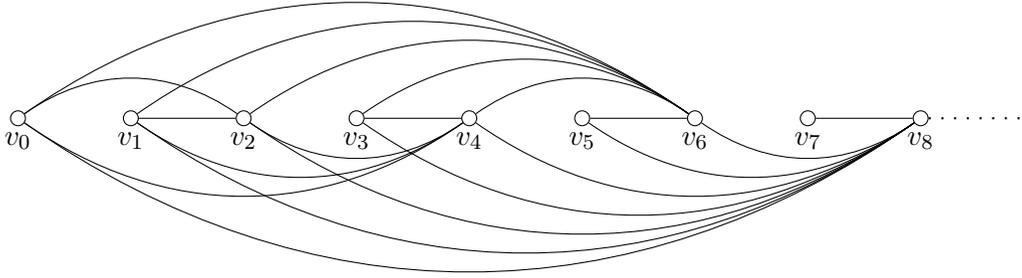
\end{center}
\vspace{-0.8cm}

If $P$ has infinitely many 0's and infinitely many 1's, then $\G(P)$ is an example of an MB-homogeneous graph. This is demonstrated in the following lemma, 

\begin{lemma}\label{gpmb}
Let $P=(p_n)_{n\in\mb{N}_0}$ be a binary sequence with infinitely many 0's and infinitely many 1's, and let $\G(P)$ be the graph determined by $P$. Then:
\begin{enumerate}[(1)]
 \item $\G(P)$ has properties $\tr$ and $\tf$ and is therefore MB-homogeneous. 
 \item $\G(P)$ is not homogeneous.
\end{enumerate}

\end{lemma}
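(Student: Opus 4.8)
For part (1), the plan is to verify properties $\tr$ and $\tf$ directly from \autoref{urexample} and then quote \autoref{trtfmb}. For $\tr$, given a finite set $U\subseteq V\G(P)$ I would let $N$ be the largest index occurring among the vertices of $U$ and, using that $P$ has infinitely many $0$'s, choose $k>N$ with $p_k=0$; since $p_k=0$ forces $v_k\sim v_j$ for every $j<k$, the vertex $v_k$ is adjacent to all of $U$ (and $v_k\notin U$ as $k>N$). Property $\tf$ is dual: pick $k>N$ with $p_k=1$, possible as $P$ has infinitely many $1$'s, so that $v_k\nsim v_j$ for all $j<k$ and hence $v_k$ is independent of $U$. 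Having both properties, \autoref{trtfmb} immediately gives that $\G(P)$ is MB-homogeneous.

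For part (2), the plan is to produce a single finite partial isomorphism that cannot be extended to an automorphism. The observation driving everything is an asymmetry between the two endpoints of an edge. First I would record the following: if $v_i\sim v_j$ with $i<j$ (equivalently $p_j=0$), then every neighbour of $v_i$ other than $v_j$ is also a neighbour of $v_j$, i.e.\ $N(v_i)\smallsetminus\{v_j\}\subseteq N(v_j)$. This is a short case check on a neighbour $v_n$ of $v_i$ according to whether $n<i$, $i<n<j$, or $n>j$, using that adjacency of $v_n$ to the later of two vertices is governed by the $p$-value of that later vertex: in every case $v_n\sim v_j$ follows. Thus for any edge the smaller-indexed endpoint has the smaller neighbourhood.

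The witnessing configuration is then the edge-flip. Using the infinitely many $1$'s, choose $i\ge 1$ with $p_i=1$, and using the infinitely many $0$'s choose $j>i$ with $p_j=0$; then $\{v_i,v_j\}$ is an edge. The vertex $v_0$ satisfies $v_0\sim v_j$ (as $0<j$ and $p_j=0$) but $v_0\nsim v_i$ (as $0<i$ and $p_i=1$), so $v_j$ has a neighbour that $v_i$ does not. The transposition $f\colon v_i\mapsto v_j,\ v_j\mapsto v_i$ is an isomorphism of the one-edge subgraph it induces, hence a partial isomorphism of $\G(P)$. If some $\phi\in\mathrm{Aut}(\G(P))$ extended $f$, then $\phi(v_0)$ would be adjacent to $\phi(v_j)=v_i$ and non-adjacent to $\phi(v_i)=v_j$, producing a neighbour of $v_i$ that is not a neighbour of $v_j$, contradicting the containment above. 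Hence $f$ does not extend and $\G(P)$ is not homogeneous.

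I expect the only real work to be the neighbourhood-containment observation and the isolation of the right edge to flip; the remainder is bookkeeping. The main point to be careful about is that the whole argument must use only that $P$ has infinitely many $0$'s and infinitely many $1$'s, so that both the edge $\{v_i,v_j\}$ and its asymmetric witness $v_0$ are guaranteed to exist for an arbitrary such sequence, rather than relying on the specific alternating pattern of \autoref{evensodds}.
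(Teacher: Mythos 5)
Your proof is correct. Part (1) is essentially identical to the paper's argument: pick an index beyond the maximum index occurring in the given finite set with $p$-value $0$ (for $\tr$) or $1$ (for $\tf$), and invoke \autoref{trtfmb}. For part (2), however, you take a genuinely different route. The paper proves non-homogeneity by running $\G(P)$ through the Lachlan--Woodrow classification of countable homogeneous graphs \cite{lachlan1980countable}: it is not complete or null (it has an edge and a non-edge), not a disjoint union of complete graphs or the complement of one (connectivity and property $\tf$), not a Henson graph or complement (by \autoref{infcompnull}), and not the random graph (no vertex can be adjacent to $v_0$ but not $v_1$, since that would force $p_{\max} = 0$ and $p_{\max}=1$ simultaneously). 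Your argument instead works directly from the definition of homogeneity: you establish the neighbourhood containment $N(v_i)\smallsetminus\{v_j\}\subseteq N(v_j)$ for any edge $v_i\sim v_j$ with $i<j$ (correct, since $p_j=0$ makes $v_j$ adjacent to everything of smaller index, and any neighbour of $v_i$ with index above $j$ has $p$-value $0$), and then show the transposition of the endpoints of an edge $\{v_i,v_j\}$ with $p_i=1$, $p_j=0$, $0<i<j$ is a partial isomorphism that cannot extend, with $v_0$ as the asymmetric witness. The one detail you elide is that $\phi(v_0)\neq v_j$, which is needed before applying the containment; it follows instantly from injectivity of $\phi$ since $\phi(v_i)=v_j$ and $v_0\neq v_i$. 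Your approach is more elementary and self-contained (it avoids the deep classification theorem entirely, and in fact shows the failure of homogeneity is witnessed by a single edge-flip), while the paper's approach situates $\G(P)$ against the known homogeneous graphs, which is information it reuses in the later classification result (\autoref{classification}).
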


\begin{proof}
\begin{enumerate}[(1)]
 \item As $P$ has infinitely many of each term, we have that for every finite subsequence $A = \{a_{i_1},\ldots,a_{i_k}\}$ of $P$ there exist natural numbers $c,d>i_k$ such that $p_c = 0$ and $p_d = 1$. This, together with the manner of the construction in \autoref{urexample}, ensures that $\G(P)$ has both properties $\tr$ and $\tf$. Therefore $\G(P)$ is MB-homogeneous by \autoref{trtfmb}. 
 \item 
We check that for any sequence $P$ satisfying the above conditions the graph $\G(P)$ is not isomorphic to a homogeneous graph by using the classification of countable homogeneous graphs by Lachlan and Woodrow \cite{lachlan1980countable}. As $\G(P)$ has both an edge and a non-edge it is neither $K^{\aleph_0}$ nor its complement $\bar{K}^{\aleph_0}$. Since $\G(P)$ is connected, it is not a disjoint union of complete graphs; as it has property $\tf$, it is not the complement of a disjoint union of complete graphs. From \autoref{infcompnull}, as $\G(P)$ is neither complete nor null it contains both $K^{\aleph_0}$ and $\bar{K}^{\aleph_0}$ as induced subgraphs; so it is not isomorphic to the $K_n$-free graph $H_n$ or its complement $\bar{H}_n$ for any $n\geq 3$. Finally, we note that as no term $p$ of $P$ can be both 0 and 1 simultaneously, there is no vertex $v_p$ that is adjacent to $\{v_0\}$ and non-adjacent to $\{v_1\}$. Hence $\G(P)$ does not satisfy the extension property characteristic of the random graph. This accounts for all countable graphs in the classification; so $\G$ is not a homogeneous graph.
\end{enumerate}

\end{proof}

\begin{remarks} Let $P,Q$ be two binary sequences with infinitely many 0's and infinitely many 1's. Any such infinite binary sequence contains every finite binary sequence $X$ as a subsequence. The finite induced subgraphs $\G(X)$ of $\G(P)$ are those induced on $V\G(X)$ by the edge relation of $P$. As this is true for all such binary sequences $P$ and $Q$, we conclude that $\G(P)$ and $\G(Q)$ have the same age. It can be shown from here that for any two such sequences $P$ and $Q$, then $\G(P)$ and $\G(Q)$ are bi-equivalent (see \autoref{biequiv}).

Throughout the rest of this section, any binary sequences $P,Q$ have infinitely many 0's and infinitely many 1's. This guarantees that any graph $\G(P)$ determined by $P$ has properties $\tr$ and $\tf$.
\end{remarks}

Our first lemma establishes a convention for the zeroth place of such a sequence. 

\begin{lemma}\label{firstplace} Suppose $P=(p_n)_{n\in\mb{N}_0}$ and $Q=(q_n)_{n\in\mb{N}_0}$ are binary sequences, and let $\G(P)$ and $\G(Q)$ be the graphs determined by $P$ and $Q$. If $p_i = q_i$ for all $i>0$, then $\G(P)\cong \G(Q)$. \dne
\end{lemma}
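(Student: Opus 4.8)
The plan is to observe that the zeroth term of the determining sequence plays no role whatsoever in the edge set of the associated graph, so that $\G(P)$ and $\G(Q)$ are in fact the very same graph on the same vertex set; the isomorphism is then witnessed by the identity map.

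First I would recall the defining rule from \autoref{urexample}: for the graph $\G(P)$ we have $v_i \sim v_j$ if and only if $p_{\max(i,j)} = 0$. Since a graph is by definition irreflexive, any edge $v_i \sim v_j$ has $i \neq j$, which forces $\max(i,j) \geq 1$. Hence the adjacency of any pair of \emph{distinct} vertices is governed solely by the terms $p_n$ with $n \geq 1$; the coordinate $p_0$ would only ever be consulted in the case $\max(i,j) = 0$, that is, for the (forbidden) loop at $v_0$, and so it never influences the edge relation.

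Next, since by hypothesis $p_n = q_n$ for every $n > 0$, the two adjacency rules defining $\G(P)$ and $\G(Q)$ agree on every pair of distinct vertices. Therefore $\G(P)$ and $\G(Q)$ share the same vertex set $\{v_0, v_1, \ldots\}$ and the same edge set, so the identity map on the vertices is a graph isomorphism (indeed the two graphs are literally equal), giving $\G(P) \cong \G(Q)$ as required.

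There is no genuine obstacle here; the only step needing care is the remark that irreflexivity eliminates the unique situation ($\max(i,j) = 0$) in which $p_0$ could have mattered. The purpose of the lemma is simply to justify the convention, used throughout the remainder of the section, that the zeroth coordinate of a determining sequence may be fixed arbitrarily without changing the graph up to isomorphism.
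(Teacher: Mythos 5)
Your proof is correct: the paper itself omits the proof of this lemma (marking it as immediate), and your observation---that for distinct $i,j$ one has $\max(i,j)\geq 1$, so by irreflexivity the term $p_0$ never enters the edge relation and hence $\G(P)$ and $\G(Q)$ have identical edge sets---is exactly the intended justification. Nothing further is needed.
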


Following this, we can take $p_0 = p_1$ for any binary sequence $P$ without loss of generality; we adopt this as convention for the rest of the section. 


Many natural questions arise from this construction. Perhaps the most pertinent of these is: to what extent does $\G(P)$ depend on the binary sequence $P$? Answering this question will tell us exactly how many new MB-homogeneous graphs there are of this kind. 

\begin{definition}\label{io}
  If $P$ is an infinite binary sequence, denote the $k^{\text{th}}$ consecutive string of 0's and 1's by $O_k$ and $I_k$ respectively. If $\G(P)$ is a graph determined by $P$, denote the vertex sets corresponding to these subsequences by $VO_k$ and $VI_k$. 
\end{definition}

See \autoref{diagramgraph} for an example of \autoref{io}.

\begin{center}
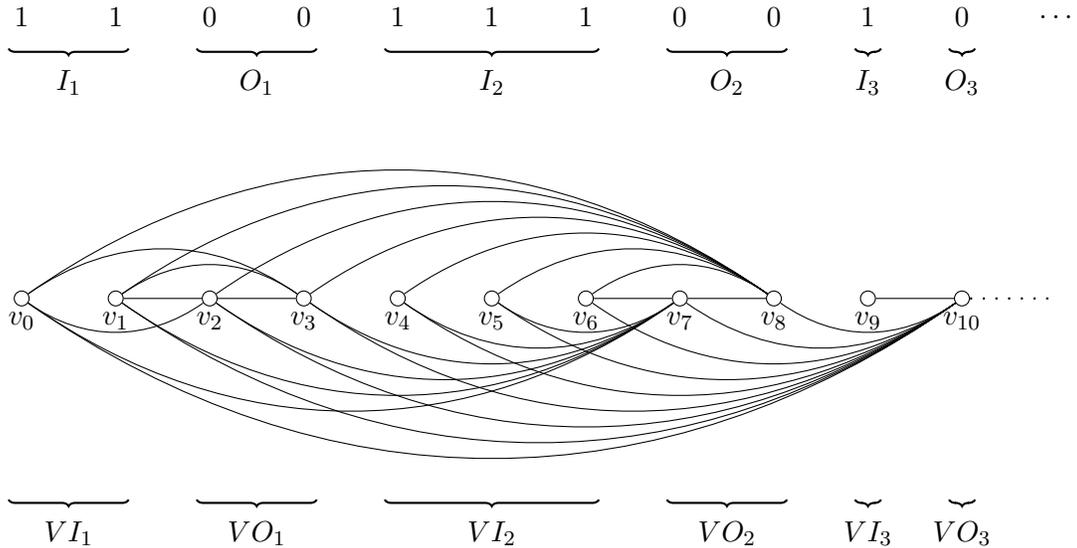
\begin{figure}[h]
\centering
\begin{tikzpicture}[node distance=2cm,inner sep=0.7mm,scale=1.25]
\node (0) at (-5,0) [circle,draw,label=below:$v_0$] {};
\node (1) at (-4,0) [circle,draw,label=below:$v_1$] {};
\node (2) at (-3,0) [circle,draw,label=below:$v_2$] {};
\node (3) at (-2,0) [circle,draw,label=below:$v_3$] {};
\node (4) at (-1,0) [circle,draw,label=below:$v_4$] {};
\node (5) at (0,0) [circle,draw,label=below:$v_5$] {};
\node (6) at (1,0) [circle,draw,label=below:$v_6$] {};
\node (7) at (2,0) [circle,draw,label=below:$v_7$] {};
\node (8) at (3,0) [circle,draw,label=below:$v_8$] {};
\node (9) at (4,0) [circle,draw,label=below:$v_9$] {};
\node (10) at (5,0) [circle,draw,label=below:$v_{10}$] {};
\node (11) at (6,0) {};
\node (0a) at (-5,3) {1};
\node (1a) at (-4,3) {1};
\node (2a) at (-3,3) {0};
\node (3a) at (-2,3) {0};
\node (4a) at (-1,3) {1};
\node (5a) at (0,3) {1};
\node (6a) at (1,3) {1};
\node (7a) at (2,3) {0};
\node (8a) at (3,3) {0};
\node (9a) at (4,3) {1};
\node (10a) at (5,3) {0};
\node (11a) at (6,3) {$\ldots$};
\node (0b) at (-5.2,2.8) {};
\node (1b) at (-3.8,2.8) {};
\node (2b) at (-3.2,2.8) {};
\node (3b) at (-1.8,2.8) {};
\node (4b) at (-1.2,2.8) {};
\node (6b) at (1.2,2.8) {};
\node (7b) at (1.8,2.8) {};
\node (8b) at (3.2,2.8) {};
\node (9b) at (3.8,2.8) {};
\node (91b) at (4.2,2.8) {};
\node (10b) at (4.8,2.8) {};
\node (101b) at (5.2,2.8) {};
\node (1c) at (-4.5,2.3) {$I_1$};
\node (3c) at (-2.5,2.3) {$O_1$};
\node (9c) at (0,2.3) {$I_2$};
\node (10c) at (2.5,2.3) {$O_2$};
\node (9c) at (4,2.3) {$I_3$};
\node (10c) at (5,2.3) {$O_3$};
\node (-0b) at (-5.2,-2) {};
\node (-1b) at (-3.8,-2) {};
\node (-2b) at (-3.2,-2) {};
\node (-3b) at (-1.8,-2) {};
\node (-4b) at (-1.2,-2) {};
\node (-6b) at (1.2,-2) {};
\node (-7b) at (1.8,-2) {};
\node (-8b) at (3.2,-2) {};
\node (-9b) at (3.8,-2) {};
\node (-91b) at (4.2,-2) {};
\node (-10b) at (4.8,-2) {};
\node (-101b) at (5.2,-2) {};
\node (1c) at (-4.5,-2.5) {$VI_1$};
\node (3c) at (-2.5,-2.5) {$VO_1$};
\node (9c) at (0,-2.5) {$VI_2$};
\node (10c) at (2.5,-2.5) {$VO_2$};
\node (9c) at (4,-2.5) {$VI_3$};
\node (10c) at (5,-2.5) {$VO_3$};
\draw (0) to [out=325,in=215] (2);
\draw (1) -- (2);
\draw (2) -- (3);
\draw (0) to [out=35,in=145] (3);
\draw (1) to [out=35,in=145] (3);
\draw (0) to [out=325,in=215] (7);
\draw (1) to [out=325,in=215] (7);
\draw (2) to [out=325,in=215] (7);
\draw (3) to [out=325,in=215] (7);
\draw (4) to [out=325,in=215] (7);
\draw (5) to [out=325,in=215] (7);
\draw (6) -- (7);
\draw (0) to [out=35,in=145] (8);
\draw (1) to [out=35,in=145] (8);
\draw (2) to [out=35,in=145] (8);
\draw (3) to [out=35,in=145] (8);
\draw (4) to [out=35,in=145] (8);
\draw (5) to [out=35,in=145] (8);
\draw (6) to [out=35,in=145] (8);
\draw (7) -- (8);
\draw (0) to [out=325,in=215] (10);
\draw (1) to [out=325,in=215] (10);
\draw (2) to [out=325,in=215] (10);
\draw (3) to [out=325,in=215] (10);
\draw (4) to [out=325,in=215] (10);
\draw (5) to [out=325,in=215] (10);
\draw (6) to [out=325,in=215] (10);
\draw (7) to [out=325,in=215] (10);
\draw (8) to [out=325,in=215] (10);
\draw (9) -- (10);
\draw [thick, loosely dotted] (10) -- (11);
\draw [thick, decoration={brace,mirror,raise=5pt},decorate] (0b) -- (1b);
\draw [thick, decoration={brace,mirror,raise=5pt},decorate] (2b) -- (3b);
\draw [thick, decoration={brace,mirror,raise=5pt},decorate] (4b) -- (6b);
\draw [thick, decoration={brace,mirror,raise=5pt},decorate] (7b) -- (8b);
\draw [thick, decoration={brace,mirror,raise=5pt},decorate] (9b) -- (91b);
\draw [thick, decoration={brace,mirror,raise=5pt},decorate] (10b) -- (101b);
\draw [thick, decoration={brace,mirror,raise=5pt},decorate] (-0b) -- (-1b);
\draw [thick, decoration={brace,mirror,raise=5pt},decorate] (-2b) -- (-3b);
\draw [thick, decoration={brace,mirror,raise=5pt},decorate] (-4b) -- (-6b);
\draw [thick, decoration={brace,mirror,raise=5pt},decorate] (-7b) -- (-8b);
\draw [thick, decoration={brace,mirror,raise=5pt},decorate] (-9b) -- (-91b);
\draw [thick, decoration={brace,mirror,raise=5pt},decorate] (-10b) -- (-101b);
\end{tikzpicture}
\caption{$\G(P)$, with $P = (1,1,0,0,1,1,1,0,0,1,0,\ldots)$}\label{diagramgraph}
\end{figure}
\end{center}
\vspace{-0.8cm}

This gives us a framework in which to show that there are countably many non-isomorphic MB-homogeneous graphs. Recall (from \cite{hodges1993model}, say) that isomorphic $\sigma$-structures have the same \emph{definable sets}; a set of $n$-tuples $A$ is definable (without parameters) if there exists a $\sigma$-formula $\phi$ such that for all $x_i\in \mc{M}$, $\bar{x} = (x_1,\ldots,x_n)\in A$ if and only if $\phi(\bar{x})$ is true.

\begin{lemma}\label{ctblymb}
There are countably many pairwise non-isomorphic bi-equivalent MB-homogeneous graphs.
\end{lemma}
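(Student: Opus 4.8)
The goal is to exhibit countably many graphs $\G(P)$, arising from binary sequences as in \autoref{urexample}, that are pairwise non-isomorphic yet all MB-homogeneous and bi-equivalent. From \autoref{gpmb} every such $\G(P)$ (for $P$ with infinitely many $0$s and $1$s) is MB-homogeneous, and from the Remarks following \autoref{gpmb} any two such graphs are bi-equivalent since they share the same age. So the only work is to produce \emph{countably many} sequences whose determined graphs are pairwise non-isomorphic. The plan is to manufacture an infinite family of sequences distinguished by some isomorphism invariant, and then verify that the invariant genuinely separates the family.

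The natural invariant to use is one read off from the block structure recorded by \autoref{io}: the lengths of the consecutive runs $I_k, O_k$ of $1$s and $0$s. Concretely, I would define, for each $n \geq 1$, a sequence $P_n$ whose run-length pattern is eventually periodic (or otherwise controlled) in a way depending on $n$ — for instance fixing the blocks so that the vertex sets $VI_k$ give rise to an independent set of a prescribed size, or so that some local configuration (a maximal independent set of a given finite size meeting a vertex in a prescribed adjacency pattern) occurs. The hint in the paragraph just before the lemma is decisive: isomorphic structures have the same definable sets. So the strategy is to find, for each $n$, a \emph{first-order sentence} $\phi_n$ (in the language of graphs) that holds in $\G(P_n)$ but fails in $\G(P_m)$ for $m \neq n$; equivalently, a definable property whose truth value is controlled by the block lengths. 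A clean candidate: the existence of a vertex $v$ such that the induced neighbourhood/non-neighbourhood structure among $v$ and finitely many earlier vertices realises a specific small graph determined by the pattern of a run of fixed length. Because adjacency $v_i \sim v_j$ is governed entirely by $p_{\max(i,j)}$, a run of $k$ consecutive $1$s forces an independent set of size $k$ all of whose later neighbours behave uniformly, and this is exactly the kind of thing a first-order sentence can detect.

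The steps, in order, are: (i) recall from \autoref{gpmb} and the subsequent Remarks that every admissible $\G(P)$ is MB-homogeneous and that all such graphs are mutually bi-equivalent, so those two clauses of the lemma are free; (ii) construct an explicit countable family $\{P_n : n \in \mb{N}\}$ of admissible sequences with distinct, controlled block-length patterns; (iii) for each pair $n \neq m$ exhibit a definable (first-order, parameter-free) property distinguishing $\G(P_n)$ from $\G(P_m)$ — using the run structure of \autoref{io} and translating a prescribed run length into the existence of an independent set with a uniform adjacency relationship to the rest of the graph; (iv) conclude via the stated fact that isomorphic structures have identical definable sets, so the graphs are pairwise non-isomorphic.

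The main obstacle I anticipate is step (iii): verifying that the chosen property is both genuinely first-order definable \emph{and} truly invariant, i.e. that it cannot be accidentally realised in the other graphs by vertices coming from a different part of the sequence. Because the edge rule depends only on $p_{\max(i,j)}$, one must be careful that the configuration witnessing $\phi_n$ in $\G(P_n)$ does not reappear in $\G(P_m)$ through some unintended combination of early vertices; pinning down the definable property so that it encodes, say, the supremum of run lengths or the occurrence of a run of a given length in a robust, relabelling-invariant way is the delicate part. I would resolve this by choosing the property to record a \emph{global} feature that is manifestly isomorphism-invariant — for example ``there exist $k$ pairwise non-adjacent vertices that are simultaneously adjacent to a common further vertex and mutually indistinguishable in their adjacency to all but finitely many vertices'' with $k$ tuned to $n$ — and then checking directly from the construction of \autoref{urexample} that this holds in $\G(P_n)$ exactly when the $n$-th block pattern is present, thereby separating the family.
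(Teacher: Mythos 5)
Your overall strategy is the same as the paper's: take sequences distinguished by the length of a run of $1$'s, find a parameter-free first-order property whose realisation is controlled by that run, and conclude non-isomorphism from invariance of definable sets, with MB-homogeneity and bi-equivalence coming for free from \autoref{gpmb} and the remark following it. However, the step you yourself flag as delicate --- step (iii) --- is left unresolved, and both candidate properties you float have genuine problems. The candidate ``the induced neighbourhood/non-neighbourhood structure among $v$ and finitely many earlier vertices realises a specific small graph'' cannot possibly work: by the remark after \autoref{gpmb}, \emph{all} admissible graphs $\G(P)$ have the same age, so any property asserting the existence of a prescribed finite induced configuration holds in all of them or in none. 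Your second candidate, ``$k$ pairwise non-adjacent vertices mutually indistinguishable in their adjacency to all but finitely many vertices,'' is not first-order as stated, because ``all but finitely many'' is not expressible; and since isomorphism invariance is exactly what you are leaning on, you cannot substitute a non-elementary property without a separate argument.

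The fix is small and is what the paper does. Take $P^n$ to be $n$ ones followed by alternating $0,1$. Since adjacency in $\G(P)$ is governed solely by $p_{\max(i,j)}$, the $n$ vertices of the first block $VI_1$ are pairwise non-adjacent and have \emph{literally identical} neighbourhoods (namely $\{v_k : k \geq n,\ p_k = 0\}$), so no ``all but finitely many'' hedge is needed. The formula
\[
\phi(x) \;=\; (\exists y)\bigl(\lnot(y=x) \wedge (\forall z)(x\sim z \Leftrightarrow y \sim z)\bigr)
\]
is parameter-free first-order, and a short check (using that consecutive vertices beyond the first block are separated by a $0$ in the alternating tail) shows its solution set in $\G(P^n)$ is exactly $VI_1$, a definable set of size $n$. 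Hence $\G(P^m) \ncong \G(P^n)$ for $m \neq n$, which is precisely your step (iv). So your plan is sound and matches the paper's proof in outline, but it is incomplete as written: the concrete distinguishing formula, and the verification that it defines exactly the first block and nothing else, is the entire content of the lemma, and the two ways you proposed to supply it would both fail.
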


\begin{proof}
Define a binary sequence $P^n = (p_i)_{i\in\mb{N}}$ by the following:
\begin{equation*}
p_i = \begin{cases}1\text{ if } i = 0,1,2,\ldots, n-1, n+1, n+3,\ldots\\
0\text{ if }i = n,n+2,\ldots
\end{cases}
\end{equation*}
Here, $P^n$ is a sequence of $n$ many 1's followed by alternating 0's and 1's. In $\G(P^n)$, for all $k > n$ and all $i < k$, it follows that $v_i \sim v_k$ if and only if $v_i \nsim v_{k+1}$. Define the formula 
\[
\phi(x) = 
(\exists y \in V\G(P^n))
(\lnot (y = x))
(\forall z\in V\G(P^n))(x\sim z \Leftrightarrow y\sim z)
\] 
This formula identifies the set of vertices $v$ with the property that there is another vertex $ w \not\sim v$ such that $v$ and $w$ both have the same set of neighbours in the graph. 
From the definitions it is not hard to see that 
the set of all vertices $x$ that satisfy $\phi$ is $VI_1$. This is a definable set of size $n$. If $m \neq n$, the sets defined by $\phi(x)$ in $\G(P^m)$ and $\G(P^n)$ have different sizes, so the graphs cannot be isomorphic. From the remark following \autoref{gpmb}, it follows that $\G(P^m)$ and $\G(P^n)$ are bi-equivalent for any $m,n\in\mb{N}$.
\end{proof}

\begin{remark}
It can be shown (in a similar fashion to the proof to \autoref{mbfrucht} later in the section) that \[\text{Aut}(\G(P)) = \prod_{i\in\mb{N}}(\text{Aut}(\G(O_k))\times \text{Aut}(\G(I_k))).\] It follows that Aut$(\G(P^n))$ is the infinite direct product of one copy of Sym$(n)$ together with infinitely many trivial groups; so Aut$(\G(P^n))\cong$ Sym$(n)$; providing an alternate proof to \autoref{ctblymb}.
Furthermore, it is straightforward to see that if two monoids $T,T'$ have non-isomorphic groups of units $U,U'$ respectively, then $T\ncong T'$. Hence we have constructed examples of non-isomorphic oligomorphic permutation monoids occurring as bimorphism monoids of bi-equivalent relational structures. By \autoref{xyorbits}, this means that each of these oligomorphic permutation monoids have the same strong orbits. 
\end{remark}

We now aim to use the framework established here to construct $2^{\aleph_0}$ non-isomorphic examples of MB-homogeneous graphs. The idea is to add in pairwise non-embeddable finite graphs into the age of some $\G(P)$ to ensure uniqueness up to isomorphism. To this end, let $A = (a_n)_{n\in\mb{N}}$ be a strictly increasing sequence of natural numbers. We use $A$ to recursively define a binary sequence $PA = (p_i)_{i\in\mb{N}}$ as follows:
\begin{description}
\item[Base:] 0 followed by $a_1$ many 1's.
\item[Inductive:] Assuming that the $n^{\text{th}}$ stage of the sequence has been constructed, add a 0 followed by $a_{n+1}$ many 1's to the right hand side of the sequence.
\end{description}
For instance, if $A = (2,3,5,\ldots)$ then $PA = (0,1,1,0,1,1,1,0,1,1,1,1,1,\ldots)$. Using such a binary sequence $PA$, we construct $\G(PA)$ in the fashion of \autoref{urexample}. As $PA$ has infinitely many of each term then $\G(PA)$ is MB-homogeneous by \autoref{gpmb}. The eventual plan is to induce finite graphs onto the independent sets induced on $\G(PA)$ by strings of consecutive 1's in $PA$. By selecting a suitable countable family of pairwise non-embeddable graphs that do not appear in the age of $\G(PA)$, we ensure graphs of different ages. 

We prove two lemmas which form the basis for our construction of $2^{\aleph_0}$ non-isomorphic examples of MB-homogeneous graphs. The second lemma is folklore; and the proof is omitted.

\begin{lemma} Let $P$ be any binary sequence. Then $\G(P)$ does not embed any cycle graph of size $m\geq 4$.
\end{lemma}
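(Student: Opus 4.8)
The plan is to exploit the defining dichotomy of the construction in \autoref{urexample}, recorded in the two bullet points immediately after it: for any vertex $v_k$ and any earlier vertex $v_j$ with $j<k$, whether $v_k\sim v_j$ holds depends \emph{only} on $p_k$. Concretely, if $p_k=0$ then $v_k$ is adjacent to \emph{every} $v_j$ with $j<k$, while if $p_k=1$ then $v_k$ is adjacent to \emph{no} such $v_j$. So each vertex, looking back at the vertices of smaller index, behaves in an all-or-nothing manner. This is exactly the structural feature I would leverage.

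First I would suppose, for contradiction, that a cycle of size $m\geq 4$ embeds into $\G(P)$. Since an embedding of relational structures preserves and reflects the edge relation, its image is an induced copy of the $m$-cycle on some vertex set $\{v_{i_1},\ldots,v_{i_m}\}$, in which every vertex has degree exactly $2$. Next I would single out the vertex $v_k$ of largest index among these, so that every other cycle vertex has index strictly smaller than $k$. Applying the dichotomy above to $v_k$, I conclude that within the cycle $v_k$ is adjacent either to all $m-1$ of the remaining vertices (the case $p_k=0$) or to none of them (the case $p_k=1$).

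The contradiction is then immediate from counting: in an $m$-cycle the degree of $v_k$ must be $2$, whereas the dichotomy forces this degree to be either $0$ or $m-1$. Since $m\geq 4$ gives $m-1\geq 3$, neither value can equal $2$, so no such embedding exists. I do not expect a genuine obstacle here; the only real content is the observation that the maximal-index vertex of any finite induced subgraph is forced into all-or-nothing adjacency with the rest, which rules out the bounded non-extremal degree $2$ demanded by a long cycle. (One could equivalently remark that the graphs $\G(P)$ are threshold graphs and hence contain no induced cycle of length at least $4$, but the direct maximal-index argument is self-contained and cleaner in this setting.)
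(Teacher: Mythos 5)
Your proof is correct and follows essentially the same argument as the paper: the paper also singles out the maximal-index vertex of the candidate vertex set and observes that its degree in the induced subgraph is either $0$ or $m-1$, hence never $2$, so the induced graph cannot be an $m$-cycle. The only cosmetic difference is that you phrase it as a proof by contradiction from an embedding, while the paper states it directly for an arbitrary induced subgraph on $m$ vertices.
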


\begin{proof} Let $VX = \{v_{i_1},\ldots,v_{i_m}\}\subseteq V\G(P)$, with $m\geq 4$. Let $\G(X)$ be the graph on $VX$ with edges induced by $\G(P)$. The degree of $v_{i_m}$ in $\G(X)$ is either $0$ or $m - 1$; in particular, it is not $2$. So $\G(X)$ is not an $m$-cycle.
\end{proof}

\begin{lemma}\label{cycles} Let $C_m$ and $C_n$ be two cycle graphs with $m,n\geq 3$. Then $C_m$ embeds in $C_n$ (and vice versa) if and only if $m=n$; in which case they are isomorphic. \dne
\end{lemma}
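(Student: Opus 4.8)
The plan is to prove both directions of the biconditional by induction on cycle length, exploiting the structural rigidity of cycles. The statement to prove is that $C_m$ embeds in $C_n$ (and vice versa) if and only if $m=n$, in which case they are isomorphic. The reverse implication is immediate: if $m=n$ then the identity map exhibits $C_m \cong C_n$, and an isomorphism is in particular a mutual embedding. So the substance lies in the forward direction, where we assume $C_m$ embeds in $C_n$ and $C_n$ embeds in $C_m$ and wish to conclude $m=n$.

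First I would recall that an embedding of graphs preserves both edges and non-edges, hence is a monomorphism and antimonomorphism simultaneously, so it induces an isomorphism onto its image as an induced subgraph. Thus if $C_m$ embeds in $C_n$, then $C_m$ is isomorphic to an induced subgraph of $C_n$. Every vertex of $C_m$ has degree exactly $2$, so each of the $m$ chosen vertices must retain exactly two neighbours among the chosen set. I would argue that any induced subgraph of $C_n$ in which every vertex has degree $2$ must be a disjoint union of cycles; since $C_m$ is connected, the image is a single cycle. The key combinatorial observation is that an induced cycle inside $C_n$ can only be the whole of $C_n$: if we omit any vertex of $C_n$, the remaining induced graph is a path (or disjoint paths), whose endpoints have degree at most $1$, contradicting the requirement that every image vertex have degree $2$. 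Hence the only induced $2$-regular connected subgraph of $C_n$ is $C_n$ itself, forcing $m=n$.

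With that lemma established, the forward direction becomes transparent: an embedding $C_m \hookrightarrow C_n$ forces $C_m \cong C_n$ as induced subgraphs, and comparing vertex counts gives $m=n$ directly — we do not even need the hypothesis that $C_n$ also embeds in $C_m$, since a single embedding already pins down the size. The mutual-embedding phrasing in the statement is then just a convenient symmetric formulation.

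The main obstacle I anticipate is justifying cleanly that the image of the embedding is an induced (rather than merely spanning) subgraph and hence $2$-regular, and then ruling out the possibility of the induced image being a proper induced sub-structure of $C_n$ of the correct degree. Once one records that an embedding reflects non-adjacency, the degree-counting argument is routine, so I expect the proof to be short. Since the paper marks this lemma as folklore and omits the proof with \dne, a full write-up is probably unnecessary, but the essential content is precisely the observation that $C_n$ has no proper induced subgraph that is $2$-regular and connected.
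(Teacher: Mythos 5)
Your proof is correct. Note that the paper itself offers nothing to compare against here: the lemma is explicitly flagged as folklore and its proof is omitted (the statement ends with the end-of-proof box), so any complete argument fills a gap rather than parallels an existing one. Your write-up is the standard argument and it is sound in the paper's conventions: an embedding in this paper is a map that is simultaneously a monomorphism and an antimonomorphism, hence an isomorphism onto an induced subgraph, so the image of $C_m$ is an induced $2$-regular connected subgraph of $C_n$; and a proper induced subgraph of $C_n$ lives inside the path obtained by deleting a vertex, hence is a disjoint union of paths and has a vertex of degree at most $1$. This forces the image to be all of $C_n$, so $m=n$. Your further observation that a \emph{single} embedding already forces $m=n$ --- the mutual-embedding hypothesis is redundant --- is also correct and is in fact the form in which the lemma is applied in \autoref{nomcycle}, where only one direction of embedding is available ($\G(I_k)$ is an $a_k$-cycle embedding an $m$-cycle). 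One could shorten the argument slightly by edge-counting (any subgraph of $C_n$ on $m<n$ vertices has at most $m-1$ edges, so it cannot contain the $m$ edges of $C_m$), which also covers the weaker ``subgraph'' notion of embedding, but your degree argument is equally complete for the induced notion the paper uses.
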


Hence $\ms{C} = \{C_n$ : $n\geq 4\}$ is a countable family of pairwise non-embeddable graphs that do not appear as induced subgraphs of any $\G(P)$. Now suppose that $A = (a_n)_{n\in\mb{N}}$ is a strictly increasing sequence of natural numbers with $a_i\geq 4$. Construct $\G(PA)$ as outlined above. We have that the size of each $I_k$ in $PA$ is $a_k$. For each independent set $\G(I_K)\subseteq \G(PA)$, draw an $a_k$-cycle on its vertices, thus creating a new graph $\G(PA)'$ (see \autoref{cyclepic}). 

\begin{center}
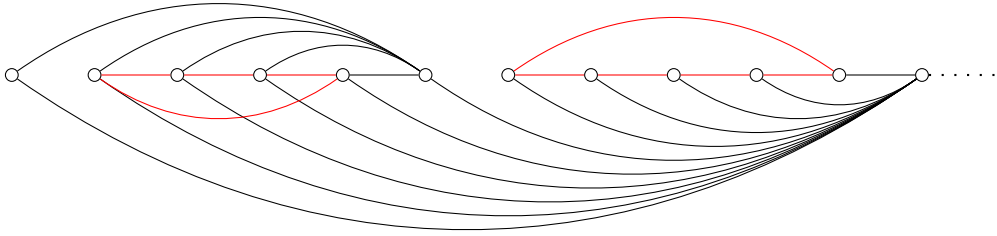
\begin{figure}[h]
\centering
\begin{tikzpicture}[node distance=2cm,inner sep=0.6mm,scale=1.1]
\node (0) at (-6,0) [circle,draw] {};
\node (1) at (-5,0) [circle,draw] {};
\node (2) at (-4,0) [circle,draw] {};
\node (3) at (-3,0) [circle,draw] {};
\node (4) at (-2,0) [circle,draw] {};
\node (5) at (-1,0) [circle,draw] {};
\node (6) at (0,0) [circle,draw] {};
\node (7) at (1,0) [circle,draw] {};
\node (8) at (2,0) [circle,draw] {};
\node (9) at (3,0) [circle,draw] {};
\node (10) at (4,0) [circle,draw] {};
\node (11) at (5,0) [circle,draw] {};
\node (12) at (6,0) {};
\draw (0) to [out=35,in=145] (5);
\draw (1) to [out=35,in=145] (5);
\draw (2) to [out=35,in=145] (5);
\draw (3) to [out=35,in=145] (5);
\draw (4) -- (5);
\draw (0) to [out=325,in=215] (11);
\draw (1) to [out=325,in=215] (11);
\draw (2) to [out=325,in=215] (11);
\draw (3) to [out=325,in=215] (11);
\draw (4) to [out=325,in=215] (11);
\draw (5) to [out=325,in=215] (11);
\draw (6) to [out=325,in=215] (11);
\draw (7) to [out=325,in=215] (11);
\draw (8) to [out=325,in=215] (11);
\draw (9) to [out=325,in=215] (11);
\draw (10) -- (11);
\draw [thick, loosely dotted] (11) -- (12);
\draw [red] (1) -- (2);
\draw [red] (2) -- (3);
\draw [red] (3) -- (4);
\draw [red] (1) to [out=325,in=215] (4);
\draw [red] (6) -- (7);
\draw [red] (7) -- (8);
\draw [red] (8) -- (9);
\draw [red] (9) -- (10);
\draw [red] (6) to [out=35,in=145] (10);
\end{tikzpicture}
\caption{$\G(PA)'$ corresponding to the sequence $A = (4,5,6,\ldots)$, with added cycles highlighted in red}\label{cyclepic}
\end{figure}
\end{center} 
\vspace{-0.8cm}

Note that even with these additional structures, $\G(PA)$ is still MB-homogeneous as it has properties $\tr$ and $\tf$. Since we have added so many structures to the age, we must be careful that we have not added extra cycles of sizes not expressed in the sequence $A$. The next proposition alleviates this concern.

\begin{proposition}\label{nomcycle} Suppose that $A = (a_n)_{n\in\mb{N}}$ is a strictly increasing sequence of natural numbers with $a_1\geq 4$ and suppose that $m\geq 4$ is a natural number such that $m\neq a_n$ for all $n\in\mb{N}$. Then the graph $\G(PA)'$ outlined above does not contain an $m$-cycle.
\end{proposition}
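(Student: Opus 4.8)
The plan is to read off the adjacency structure of $\G(PA)'$ completely and then reason about the vertex of largest index in a putative induced cycle, using the convention (consistent with the earlier cycle lemma) that ``contains an $m$-cycle'' means ``embeds $C_m$ as an \emph{induced} subgraph''.

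First I would record the shape of $\G(PA)'$. Since $PA = 0\,1^{a_1}\,0\,1^{a_2}\,0\,1^{a_3}\cdots$, the $0$'s occur as isolated single entries separating the independent $1$-blocks $VI_1, VI_2, \ldots$, where $|VI_k| = a_k$. From \autoref{urexample}, for $i<j$ we have $v_i\sim v_j$ iff $p_j = 0$, and adding the cycles only inserts edges \emph{within} the blocks. Reading this off gives: each $0$-vertex is adjacent to \emph{every} earlier vertex and to every other $0$-vertex; and a $1$-vertex $u\in VI_k$ is adjacent precisely to its two neighbours in the added cycle $C_{a_k}$ and to every $0$-vertex lying after $VI_k$. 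The crucial bookkeeping fact is that every $0$-vertex adjacent to a member of $VI_k$ sits immediately after $VI_k$ or later, hence has index strictly larger than every vertex of $VI_k$. I would also note that the induced subgraph on each $VI_k$ is exactly the cycle $C_{a_k}$, while $1$-vertices in distinct blocks are non-adjacent.

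With this in hand, suppose $\G(PA)'$ has an induced copy of $C_m$ on a set $W$ with $|W|=m\geq 4$, and let $w^\ast\in W$ be the vertex of largest index. If $w^\ast$ were a $0$-vertex it would be adjacent to all of $W\smallsetminus\{w^\ast\}$, giving it degree $m-1\geq 3$ in the induced subgraph, which is impossible in $C_m$ (every vertex has degree exactly $2$). Hence $w^\ast$ is a $1$-vertex, say $w^\ast\in VI_k$. The heart of the argument is then to show $W\subseteq VI_k$: if not, connectivity of the induced cycle produces an edge $uu'$ with $u\in W\cap VI_k$ and $u'\in W\smallsetminus VI_k$. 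Now $u'$ cannot be a $1$-vertex of another block (those are non-adjacent to $VI_k$), so $u'$ must be a $0$-vertex adjacent to $u$; but by the bookkeeping fact such a $0$-vertex has index exceeding every vertex of $VI_k$, in particular exceeding $w^\ast$, contradicting the maximality of $w^\ast$. Therefore $W\subseteq VI_k$, so the induced $C_m$ lives inside $C_{a_k}$; since deleting any vertex of a cycle leaves a disjoint union of paths, the only induced subgraph of $C_{a_k}$ that is itself a cycle is $C_{a_k}$, forcing $W = VI_k$ and $m = a_k$. This contradicts the hypothesis $m\neq a_n$ for all $n$, completing the proof.

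The step I expect to be the main obstacle is exactly ruling out cycles that exploit the $0$-vertices to ``hop'' between different blocks, since a single $0$-vertex is adjacent to very many vertices and could a priori knit together an induced cycle of unexpected length. The resolution is the index/maximality observation: any $0$-vertex touching $VI_k$ lies strictly beyond the chosen maximal vertex $w^\ast$, so no such hop can occur inside $W$. Everything else (the degree count at $w^\ast$ and the ``no proper induced sub-cycle of a cycle'' fact) is routine once the adjacency structure is laid out.
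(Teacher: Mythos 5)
Your proof is correct, and it takes a genuinely cleaner route than the paper's, while resting on the same structural facts (a $0$-vertex is adjacent to every vertex of smaller index; a $1$-vertex in $VI_k$ has, besides its two added-cycle neighbours, only $0$-vertices of strictly larger index as neighbours). The paper works at the \emph{bottom} of the index ordering: writing the cycle as $v_{i_1},\ldots,v_{i_m}$ with $i_1<\cdots<i_m$, it notes that the degree-$2$ constraint forces $q_{i_j}=1$ for $j\geq 4$, and then eliminates the possibilities $q_{i_3}=0$, $q_{i_2}=0$, $q_{i_1}=0$ by a case analysis before concluding that the cycle lies in a single block and invoking \autoref{cycles}. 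You work at the \emph{top}: the maximal-index vertex $w^\ast$ cannot be a $0$-vertex (its induced degree would be $m-1\geq 3$), and once $w^\ast\in VI_k$, any edge of the cycle leaving $VI_k$ would end at a $0$-vertex of index larger than that of $w^\ast$, contradicting maximality; connectivity of the cycle then traps the whole vertex set inside $VI_k$. This single extremal observation replaces the paper's Cases 1 and 2 (and the sketched third case for $q_{i_1}=0$), and you also prove the final step---a cycle has no proper induced sub-cycle---directly instead of citing \autoref{cycles}. Your reading of ``contains an $m$-cycle'' as an \emph{induced} copy of $C_m$ is also the correct one: it is what \autoref{noniso} needs, since ages consist of induced substructures, and it is implicit in the paper's use of $d_M(v)=2$; under the non-induced reading the statement would in fact be false, as the $0$-vertices span an infinite clique.
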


\begin{proof} Suppose that $M\subseteq \G(PA)'$ is an $m$-cycle. Then the edge set of $M$ is a combination of the edges of the graph induced by the finite subsequence $Q = (q_{i_1},\ldots,q_{i_m})$ of $PA$ (where $i_1<i_2<\ldots<i_m$) on $VM = \{v_{i_1},\ldots,v_{i_m}\}$ and the edges from cycles added onto $\G(PA)$. We aim to show that $M = \G(I_n)$ for some $n\in\mb{N}$, that is, the only cycles of size $\geq 4$ in $\G(PA)'$ are precisely those we added in the construction. So assume here that $Q$ contains a 0. As $M$ is an $m$-cycle, $d_M(v)=2$ for all $v\in M$ and so the only elements of $Q$ that can be 0 are $q_{i_1},q_{i_2}$ and $q_{i_3}$. We split our consideration into cases.

\begin{case1}[$q_{i_3} = 0$] As we assume this, $v_{i_3}$ is adjacent to both $v_{i_1}$ and $v_{i_2}$. If $q_{i_2} = 0$, then $v_{i_2} \sim v_{i_1}$ creating a 3-cycle; this is a contradiction as $M$ does not embed a 3-cycle. Therefore, $q_{i_2} = 1$ and so $v_{i_1}$ is adjacent to some $v_{i_j}$ where $3<j\leq m$. Since $q_{i_j} = 1$ for all $i_j > i_3$, it follows that the edge between $v_{i_1}$ and $v_{i_j}$ was induced by an added cycle. This implies that $v_{i_1},v_{i_j}\in V\G(I_k)$ for some $k$. Therefore, $(b_{i_1},\ldots,b_{i_j})$ is a sequence of 1's where $i_1<i_2<\ldots<i_j$ are consecutive natural numbers; a contradiction as $i_1<i_3<i_j$ and so $1 = q_{i_3} = 0$. Hence, $q_{i_3} \neq 0$.
\end{case1}

\begin{case2}[$q_{i_2} = 0$] Since this happens, $v_{i_2}$ is adjacent to $v_{i_1}$ and some vertex $v_{i_j}\neq v_{i_1}$; so $i_j > i_2$. If $q_{i_j}$ is 1, then $v_{i_j}$ is non-adjacent to any vertex $v_{i_k}$ with $i_k<i_j$ and $q_{i_k} = 0$, as the construction of $\G(PA)'$ ensures that no edges are drawn in this case. But this is a contradiction as $q_{i_2} = 0$ and $v_{i_j}\sim v_{i_2}$. So $q_{i_j}$ must be 0 in this case; but $i_j >i_2$ and so $q_{i_j} = 1$ by Case 1 and the argument preceding Case 1. This is a contradiction and so $q_{i_2} = 1$.
\end{case2} 

A similar argument to that of Case 2 holds for when $q_{i_1}=0$ and so no element of $Q$ is 0. Hence, $Q$ is made up of 1's; however, it is still a possibility that these originate from from different $I_k$'s. We now show however that every vertex of $M$ comes from a single $VI_k$ for some $k$. As no element of $Q$ is 0, we conclude that two vertices in $M$ have an edge between them only if they are contained in the same $VI_k$ and have an edge between them in $\G(I_k)$. As $M$ is connected, we have that $M\subseteq \G(I_k)$ for some $k$. Finally, as $\G(I_k)$ is an $a_k$-cycle embedding an $m$-cycle, we are forced to conclude that $m=a_k$ by \autoref{cycles} and so we are done.
\end{proof}

\begin{corollary}\label{noniso} Suppose that $A=(a_n)_{n\in\mb{N}}$ and $B=(b_n)_{n\in\mb{N}}$ are two different strictly increasing sequences of natural numbers with $a_1,b_1 \geq 4$. Then $\G(PA)'\ncong \G(PB)'$.
\end{corollary}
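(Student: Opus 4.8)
The plan is to attach to each graph a single isomorphism invariant that recovers the defining sequence. The natural candidate is the set of cycle lengths (of size at least $4$) that embed into the graph, since \autoref{nomcycle} already pins down exactly which such cycles occur. Concretely, for a graph $\G$ I would set
\[
S(\G) = \{\, m \geq 4 \;:\; C_m \text{ embeds into } \G \text{ as an induced subgraph} \,\}.
\]
Because any isomorphism restricts to isomorphisms between finite induced subgraphs, the set $S(\G)$ depends only on $\mathrm{Age}(\G)$ and is therefore an isomorphism invariant; in particular $\G(PA)' \cong \G(PB)'$ would force $S(\G(PA)') = S(\G(PB)')$.

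The next step is to compute $S(\G(PA)')$ precisely. The inclusion $S(\G(PA)') \supseteq \{a_n : n \in \mb{N}\}$ is immediate from the construction: the vertices of $\G(I_k)$ form an independent set in $\G(PA)$, and in passing to $\G(PA)'$ we draw exactly an $a_k$-cycle on them, so the induced subgraph on $VI_k$ is precisely $C_{a_k}$; since $a_k \geq a_1 \geq 4$, each $a_k$ lies in $S(\G(PA)')$. The reverse inclusion $S(\G(PA)') \subseteq \{a_n : n \in \mb{N}\}$ is exactly the content of \autoref{nomcycle}: for every $m \geq 4$ with $m \neq a_n$ for all $n$, the graph $\G(PA)'$ contains no $m$-cycle (a fortiori none as an induced subgraph). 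Hence $S(\G(PA)') = \{a_n : n \in \mb{N}\}$, and symmetrically $S(\G(PB)') = \{b_n : n \in \mb{N}\}$.

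Finally I would argue by contradiction. Assume $\G(PA)' \cong \G(PB)'$; then $\{a_n : n \in \mb{N}\} = \{b_n : n \in \mb{N}\}$ as subsets of $\mb{N}$. Since both $A$ and $B$ are \emph{strictly} increasing, each is the unique order-preserving enumeration of its underlying set, so equality of these sets forces $a_n = b_n$ for every $n$, i.e.\ $A = B$. This contradicts the hypothesis that $A$ and $B$ are different, completing the proof.

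The genuine difficulty has already been discharged in \autoref{nomcycle}; the remaining steps here are routine. The only point that needs care is recognising that the invariant $S$ records the \emph{set} of values $\{a_n\}$ rather than the ordered sequence, so that the final deduction relies essentially on the strict monotonicity of $A$ and $B$ to reconstruct the sequences from their ranges.
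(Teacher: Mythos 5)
Your proposal is correct and takes essentially the same route as the paper: both arguments rest on \autoref{nomcycle} together with the cycles built onto the sets $VI_k$, concluding that the collection of cycle lengths embeddable in $\G(PA)'$ and $\G(PB)'$ (hence their ages) must differ. The paper merely exhibits a single witness --- a length $a_j$ with $a_j < b_j$ lying in one age but not the other --- instead of computing the full invariant $S(\G) = \{a_n : n \in \mb{N}\}$ and appealing to the unique increasing enumeration of its range, so your version is just a tidier packaging of the identical idea.
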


\begin{proof} As $A$ and $B$ are different sequences there exists a $j\in\mb{N}$ such that $a_j\neq b_j$; without loss of generality assume that $a_j<b_j$. Hence $\G(PA)'$ embeds an $a_j$-cycle; but as $a_j\notin B$, by \autoref{nomcycle} $\G(PB)'$ does not embed an $a_j$-cycle. Hence Age$(\G(PA)')\neq$ Age$(\G(PB)')$ and so they are not isomorphic.
\end{proof}

These results prove the following:

\begin{theorem}\label{uncmany} There are $2^{\aleph_0}$ many countably infinite, non-isomorphic, non-bi-equivalent, MB-homogeneous graphs, each of which is equivalent to the random graph $R$ up to bimorphisms.
\end{theorem}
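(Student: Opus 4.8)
The plan is to assemble the preceding results into a single cardinality argument, taking as index set the collection of all strictly increasing sequences $A = (a_n)_{n\in\mb{N}}$ of natural numbers with $a_1\geq 4$. Such sequences correspond bijectively to the infinite subsets of $\{4,5,6,\ldots\}$, and a countably infinite set has $2^{\aleph_0}$ infinite subsets, so there are $2^{\aleph_0}$ such sequences. To each $A$ I would associate the graph $\G(PA)'$ built above, and show that the assignment $A\mapsto\G(PA)'$ produces $2^{\aleph_0}$ graphs with the desired properties.

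First I would verify that each $\G(PA)'$ is a countably infinite MB-homogeneous graph equivalent to $R$ up to bimorphisms. Countability is immediate from the construction. Since $PA$ has infinitely many $0$'s and infinitely many $1$'s, the underlying graph $\G(PA)$ has properties $\tr$ and $\tf$ by \autoref{gpmb}(1). The cycle decoration only inserts edges inside the independent sets $\G(I_k)$, so the witnesses for these two properties survive: given a finite $U\subseteq V\G(PA)'$, a vertex $v_c$ with $p_c=0$ occurring after every index in $U$ is adjacent to all of $U$ and is untouched by the added cycles, giving $\tr$; and given a finite $V$, any vertex of a $1$-block $I_k$ occurring entirely after $V$ is non-adjacent to all of $V$ (the new edges lie within $I_k$), giving $\tf$. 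Hence $\G(PA)'$ retains both properties, so it is MB-homogeneous by \autoref{trtfmb} and is equivalent to $R$ up to bimorphisms by \autoref{beqr}.

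Next I would distinguish the graphs up to isomorphism and up to bi-equivalence at once. By \autoref{noniso}, distinct sequences $A\neq B$ yield $\G(PA)'\ncong\G(PB)'$; moreover the proof of that corollary establishes the stronger fact that $\text{Age}(\G(PA)')\neq\text{Age}(\G(PB)')$, since one of the two graphs embeds a cycle of a length that \autoref{nomcycle} forbids in the other. Because bi-equivalent structures have equal ages by the definition preceding \autoref{biequiv}, distinct sequences therefore also give non-bi-equivalent graphs. Thus $A\mapsto\G(PA)'$ is an injection, up to both isomorphism and bi-equivalence, from our $2^{\aleph_0}$ sequences into the class of MB-homogeneous graphs lying in the bimorphism-equivalence class of $R$.

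Finally, the total count is exactly $2^{\aleph_0}$: the construction gives at least this many pairwise non-isomorphic (and pairwise non-bi-equivalent) examples, while there are at most $2^{\aleph_0}$ graphs on a fixed countable vertex set. The only point demanding genuine care is the preservation of properties $\tr$ and $\tf$ under the cycle decoration; every other step is a direct invocation of an earlier result, so I expect this verification to be the main, if modest, obstacle.
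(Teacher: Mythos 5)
Your proposal is correct and takes essentially the same route as the paper's proof: count the strictly increasing sequences, get pairwise non-isomorphic graphs with pairwise distinct ages from \autoref{noniso} (distinct ages giving non-bi-equivalence by definition), and conclude equivalence to $R$ up to bimorphisms from properties $\tr$ and $\tf$ via \autoref{beqr}. The only addition is your explicit check that the cycle decoration preserves $\tr$ and $\tf$ (only $\tf$ genuinely needs care, since adding edges cannot destroy $\tr$), a point the paper asserts without detailed proof.
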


\begin{proof} As there are $2^{\aleph_0}$ strictly increasing sequences of natural numbers we have continuum many examples of $\G(PA)'$ by \autoref{noniso}. As these graphs have different ages, this means we have constructed $2^{\aleph_0}$ many countably infinite, non-isomorphic, non-bi-equivalent MB-homogeneous graphs. Furthermore, as each these examples has property $\tr$ and $\tf$, they are equivalent to $R$ up to bimorphisms by \autoref{beqr}.
\end{proof}

\begin{remark} As a consequence of this, $|[R]^{\sim_b}| = 2^{\aleph_0}$. This means that there are $2^{\aleph_0}$ pairwise non-isomorphic graphs $\G$ with the property stated in the remark after \autoref{beqr}.\end{remark}

Finally in this section, we utilise this technique of overlaying finite graphs in order to prove the second main theorem of the section. Before we do this, here is an important lemma. Recall that $N(v)$ denotes the neighbourhood set of a vertex $v\in V\G$.

\begin{lemma}\label{neighbourhoodi}\label{neighbourhood} Let $P=(p_n)_{n\in\mb{N}_0}$ be a binary sequence, and let $\G(P)$ be the graph determined by this binary sequence. 
\begin{enumerate}[(1)]
 \item If $v_j$ is a vertex in $VI_n$ for some $n\in\mb{N}$, then $\G(N(v_j))\cong K^{\aleph_0}$.
 \item Suppose that $v,w$ are vertices in some $VO_k = \{v_{k_1},\ldots,v_{k_n}\}$. Then $N(v)\cup \{v\} = N(w)\cup \{w\}$ and $\G(N(v))\cong \G(N(w))$.
\end{enumerate}
\end{lemma}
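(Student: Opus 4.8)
The plan is to read everything off directly from the defining rule $v_i \sim v_j \iff p_{\max(i,j)}=0$ together with the two observations recorded after \autoref{urexample}.

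For part (1), recall that a vertex $v_j\in VI_n$ has $p_j=1$. Unwinding adjacency, no $v_i$ with $i<j$ is adjacent to $v_j$ (since then $\max(i,j)=j$ and $p_j=1$), whereas for $i>j$ we have $v_i\sim v_j \iff p_i=0$. Hence $N(v_j)=\{v_i : i>j,\, p_i=0\}$, which is infinite because $P$ has infinitely many $0$'s. I then claim the induced graph on this set is complete: given two of its vertices $v_a,v_b$ with $a<b$, we have $\max(a,b)=b$ and $p_b=0$, so $v_a\sim v_b$. Every pair in $N(v_j)$ is therefore adjacent, giving $\G(N(v_j))\cong K^{\aleph_0}$.

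For part (2), I would first record that a vertex $v_a$ with $p_a=0$ (as holds for every vertex of $VO_k$) satisfies $N(v_a)=\{v_i : i<a\}\cup\{v_i : i>a,\, p_i=0\}$; equivalently, $v_i\in N(v_a)\cup\{v_a\}$ exactly when $i\le a$ or $p_i=0$. Writing $v=v_a$ and $w=v_b$ with $a<b$ (both lying in the run $VO_k$), the sets $N(v)\cup\{v\}$ and $N(w)\cup\{w\}$ can only disagree on indices $i$ with $a<i\le b$; but every such index lies in the same maximal block of $0$'s, so $p_i=0$ and $v_i$ belongs to both sets. This establishes $N(v)\cup\{v\}=N(w)\cup\{w\}$; call this common set $S$. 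Since $\max(a,b)=b$ and $p_b=0$ we have $v\sim w$, so $N(v)=S\setminus\{v\}$ and $N(w)=S\setminus\{w\}$. The key observation is that $v$ and $w$ are \emph{twins} in $S$: for any $v_i\in S\setminus\{v,w\}$ a short case check (namely $i<a$; $a<i<b$, where $p_i=0$ by consecutiveness; and $i>b$) shows $v_i\sim v \iff v_i\sim w$. Consequently the map $\theta:N(v)\to N(w)$ that fixes $S\setminus\{v,w\}$ pointwise and sends $w\mapsto v$ is a well-defined bijection, and the twin property guarantees it preserves and reflects adjacency, so it is the required isomorphism $\G(N(v))\cong\G(N(w))$.

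The only genuinely structural input — and the main thing to get right — is the use of the consecutiveness of the run $VO_k$ in part (2): it is precisely what forces $p_i=0$ for the indices $a<i<b$, and hence simultaneously yields the equality $N(v)\cup\{v\}=N(w)\cup\{w\}$ and the twin property. Everything else is a direct unwinding of the $\max(i,j)$ adjacency rule, so I expect no further obstacles.
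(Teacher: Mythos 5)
Your proof is correct; part (1) is essentially identical to the paper's argument, but in part (2) you construct the isomorphism $\G(N(v))\cong\G(N(w))$ by a genuinely different route. The paper works with a global decomposition of the common closed neighbourhood: setting $X=\{v_j \,:\, j<k_1\}$ and $Y=\{v_m \,:\, m\geq k_1,\ p_m=0\}$, it shows $N(u)=X\cup(Y\smallsetminus\{u\})$ for every $u\in VO_k$, that $\G(Y\smallsetminus\{u\})$ is an infinite complete graph, and that every vertex of $Y$ is joined to every vertex of $X$; the isomorphism then fixes $X$ pointwise and maps $Y\smallsetminus\{v\}$ onto $Y\smallsetminus\{w\}$ by an arbitrary bijection. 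You instead deduce the isomorphism formally from the first claim: once $N(v)\cup\{v\}=N(w)\cup\{w\}=S$ and $v\sim w$ are known, $v$ and $w$ are true twins, and the map fixing $S\smallsetminus\{v,w\}$ pointwise and sending $w\mapsto v$ is an isomorphism. Your route is more elementary and more general --- it needs neither the completeness of $\G(Y\smallsetminus\{u\})$ nor the join structure, and it would work in any graph for adjacent vertices with equal closed neighbourhoods; indeed your case check of the twin property is even redundant, since any $v_i\in S\smallsetminus\{v,w\}$ lies in $N(v)\cap N(w)$ by the very definition of $S$, so it is automatically adjacent to both $v$ and $w$. What the paper's heavier decomposition buys is reusability: the sets $X$, $Y$ and the completeness of $\G(Y\smallsetminus\{u\})$ are exactly what is invoked again in the proof of \autoref{mbfrucht} (Claim 1 there cites this lemma and then argues with the sets $X_a,X_b,Y_a,Y_b$ and their maximum independent sets), so the paper's proof doubles as a record of that structural information, whereas your proof establishes the lemma's statement alone.
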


\begin{proof} 
\begin{enumerate}[(1)]
 \item From the assumption that $p_j = 1$, the observation of \autoref{urexample}, and the definition of an edge in $\G(P)$, we have that $v_j\sim v_k$ if and only if $j<k$ and $p_k = 0$; so $N(v_j) = \{v_k$ : $k>j,\; p_k = 0\}$. As there are infinitely many 0's in $P$, this set is infinite as $j$ is finite. Now, take $v_a,v_b\in N(v_j)$. Here, $v_a\sim v_b\in \G(N(v_j))$ if and only if $p_{\text{max}(a,b)} = 0$; but $p_a = p_b = 0$ and so any two vertices of $N(v_j)$ are adjacent.
 
 \item Define the sets $X = \{v_j$ : $j<k_1\}$ and $Y = \{v_m$ : $m\geq k_1,\; p_m = 0\}$. Due to the construction of $\G$ in \autoref{urexample}, we have that $N(v)\cup \{v\} = X\cup Y = N(w)\cup \{w\}$. For all $u\in V\G(VO_k)$, it is easy to see that $N(u) = X\cup (Y\smallsetminus \{u\})$. Using a similar argument to the proof of (1), we have that $\G(Y\smallsetminus\{u\})$ is an infinite complete graph for any $u\in \G(VO_k)$, and every element in $Y$ is adjacent to every element of $X$. For any $v,w\in VO_k$ define a map $f:X\cup (Y\smallsetminus\{v\})\rarr{}X\cup (Y\smallsetminus\{w\})$ fixing $X$ pointwise and sending $(Y\smallsetminus\{v\})$ to $(Y\smallsetminus\{w\})$ in any fashion; this is an isomorphism between $\G(N(v))$ and $\G(N(w))$.
\end{enumerate}
\end{proof}

Before our next proposition, recall that if there exists $\gamma\in$ Aut$(\G)$ such that $v\gamma = w$, then the graphs induced on $N(v)$ and $N(w)$ are isomorphic. Furthermore, recall that an independent set $U\subseteq \G$ is a \emph{maximum independent set} if there does not exist a set $W\supsetneq U$ such that $W$ is independent. Finally, a graph $\G$ is \emph{$n$-regular} if every $v\in V\G$ has degree $n$ for some $n\in\mb{N}$.

\begin{theorem}\label{mbfrucht} Any finite group $H$ arises as the automorphism group of an MB-homogeneous graph $\G$.
\end{theorem}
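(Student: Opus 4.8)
The plan is to combine Frucht's theorem with the overlaying technique of this section. By (a strengthening of) Frucht's theorem we may take $F$ to be a connected, triangle-free graph of minimum degree at least $2$ with $\text{Aut}(F)\cong H$; such graphs exist for every finite $H$ (the trivial group being realised by a triangle-free asymmetric graph). Put $n=|VF|$ and let $P=(p_i)$ be the binary sequence consisting of $n$ ones followed by the alternating sequence $0,1,0,1,\ldots$, so that in the notation of \autoref{io} the block $VI_1$ has size $n\geq 3$ while every other block $VO_k,VI_k$ $(k\geq 2)$ is a singleton. Form $\G$ from $\G(P)$ by overlaying $F$ on the independent set $VI_1$, exactly as cycles were overlaid earlier in the section. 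Since $P$ has infinitely many $0$'s and $1$'s and we only add edges inside the finite set $VI_1$, the witnesses for properties $\tr$ and $\tf$ (provided by later $0$'s and later $1$'s) are untouched, so $\G$ has properties $\tr$ and $\tf$ and is MB-homogeneous by \autoref{trtfmb}. It remains to compute $\text{Aut}(\G)$.

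For the inclusion $H\hookrightarrow\text{Aut}(\G)$, note that by the computation underlying \autoref{neighbourhood}(1) every vertex of $VI_1$ has the same neighbours outside $VI_1$, namely every vertex of every block $VO_k$. Hence any $\alpha\in\text{Aut}(F)$, extended by the identity on $V\G\setminus VI_1$, preserves every edge of $\G$: edges inside $VI_1$ are preserved because $\alpha\in\text{Aut}(F)$, and edges between $VI_1$ and the rest because all of $VI_1$ shares a single external neighbourhood. This embeds $H$ into $\text{Aut}(\G)$.

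The substance is the reverse inclusion: every $\phi\in\text{Aut}(\G)$ fixes each singleton block pointwise and restricts on $VI_1$ to an element of $\text{Aut}(F)$. A direct computation from \autoref{urexample} and \autoref{io} yields an intrinsic, and hence $\phi$-invariant, description of the three families of vertices. A vertex has a \emph{complete} neighbourhood precisely when it lies in some $VI_k$ $(k\geq 2)$, since there $N(v_j)$ is a tail of the clique of $0$-vertices; the remaining vertices split inside the induced subgraph $W$ on their union, where the $VO_k$-vertices are exactly those adjacent to all others of $W$ (the $0$-vertices dominate, while no vertex of $VI_1$ does, as $F$ triangle-free of minimum degree $\geq 2$ on $\geq 3$ vertices has no dominating vertex), leaving $VI_1$ as the rest. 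Thus $\phi$ preserves each of $\{VO_k\}_k$, $\{VI_k\}_{k\geq 2}$ and $VI_1$. Moreover $\phi$ preserves neighbourhood containment, and one checks that closed-neighbourhood inclusion $N[\,\cdot\,]$ well-orders the $VO_k$-vertices in type $\omega$, while open-neighbourhood inclusion orders the $VI_k$-vertices $(k\geq 2)$ in the reverse type; since the only order-automorphism of $\omega$ is the identity, $\phi$ fixes every singleton block. Finally $VI_1$ is the unique block of size $n\geq 3$ and its only internal edges are those of $F$, so $\phi|_{VI_1}\in\text{Aut}(F)$. Hence $\text{Aut}(\G)\cong\text{Aut}(F)\cong H$.

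The main obstacle is the intrinsic separation of the three families: once $F$ is overlaid, $VI_1$ is no longer a set of twins, so one must rule out any automorphism carrying a vertex of $VI_1$ into the rigid ``spine'' of singleton blocks, and conversely. This is precisely the role of the hypotheses on $F$ (triangle-free, minimum degree $\geq 2$); with these in hand the order-type-$\omega$ argument rigidifying the spine is routine. Finally, as $\G$ is MB-homogeneous over the finite relational signature of graphs, \autoref{xyotm} shows that $\text{Bi}(\G)$ is an oligomorphic permutation monoid whose group of units is $\text{Aut}(\G)\cong H$, giving the promised oligomorphic permutation monoid with prescribed finite group of units.
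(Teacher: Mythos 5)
Your proposal is correct, and while it uses the same skeleton as the paper --- the sequence $P$ of $n$ ones followed by alternating zeros and ones, overlaying a finite graph realising $H$ on $VI_1$, MB-homogeneity via properties $\tr$ and $\tf$ and \autoref{trtfmb}, and the same final argument that extending an automorphism of the overlaid graph by the identity gives an automorphism of $\G$ --- the substance, namely the computation of $\mathrm{Aut}(\G)$, runs along genuinely different lines. The paper overlays a \emph{3-regular} graph $\Delta$ on $n\geq 6$ vertices (exactly what the cited theorem of Frucht provides) and separates the vertex families by counting: the neighbourhood of a spine vertex contains $\Delta$ and hence at least $6$ non-edges, while a vertex of $VI_1$ has at most $3$ non-edges in its neighbourhood by 3-regularity; the spine vertices $VO_i$ are then distinguished from one another by comparing sizes of maximum independent sets in their neighbourhoods (the paper's Claim 1). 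You instead overlay a triangle-free graph of minimum degree $\geq 2$, which yields a structural separation: a vertex has complete neighbourhood iff it lies in some $VI_k$, $k\geq 2$ (minimum degree $\geq 2$ plus triangle-freeness gives every $VI_1$-vertex two mutually non-adjacent neighbours), and within the remaining set $W$ the $0$-vertices are exactly the dominating ones (a dominating vertex of $F$ together with minimum degree $2$ would force a triangle). Your rigidification of the spine by the strict inclusion chains $N[v_n]\subsetneq N[v_{n+2}]\subsetneq\cdots$ and $N(v_{n+1})\supsetneq N(v_{n+3})\supsetneq\cdots$, of order type $\omega$, is arguably cleaner than the paper's independent-set counting, and all of these steps check out.

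The one point you must patch is the opening citation: a connected, triangle-free graph of minimum degree $\geq 2$ with $\mathrm{Aut}(F)\cong H$ is \emph{not} what Frucht's theorem literally gives, and ``a strengthening'' is doing real work there. The claim is true, but justify it, for instance as follows: take the connected cubic graph $X$ with $\mathrm{Aut}(X)\cong H$ given by Frucht's theorem (the paper's own reference) and subdivide every edge once. The subdivision $S(X)$ is connected, triangle-free, of minimum degree $2$, and $\mathrm{Aut}(S(X))\cong\mathrm{Aut}(X)$, since automorphisms of $S(X)$ preserve the partition into branch vertices (degree $3$) and subdivision vertices (degree $2$), and adjacency in $X$ is recovered as ``sharing a common subdivision vertex.'' (Alternatively, cite Sabidussi's extension of Frucht's theorem realising every finite group by graphs of prescribed girth.) With that sentence added, your proof is complete, and the concluding application of \autoref{xyotm} matches the paper's remark.
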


\begin{proof}
By a version of Frucht's theorem \cite{frucht1949graphs} there exists countably many 3-regular graphs $\G$ such that Aut$(\G) \cong H$; and so, as there are only finitely many graphs of size less than or equal to 5, there exists a 3-regular graph $\Delta$ of size $n\geq 6$ such that Aut$(\Delta) \cong H$. By the handshake lemma (see p5 \cite{diestel2000graph}), such a graph must have a total of $3n/2$ edges out of a total of $(n^2-n)/2$ possible edges; as $n\geq 6$, this means that $\Delta$ must induce at least 6 non-edges.

Define a binary sequence $P = (p_i)_{i\in\mb{N}}$ by the following:
\begin{equation*}
p_i = \begin{cases}1\text{ if } i = 0,1,2,\ldots, n-1, n+1, n+3,\ldots\\
0\text{ if }i = n,n+2,\ldots
\end{cases}
\end{equation*}
So $P$ is a sequence of $n$ many 1's followed by alternating 0's and 1's. Using the notation established above in \autoref{io}, it follows that $|I_1| = n$ and $|O_k| = |I_m| = 1$ for $k\geq 1$ and $m\geq 2$. Construct $\G(P)$ on $V\G(P) = \{v_0,v_1,\ldots\}$ as illustrated in \autoref{urexample}, and draw in edges on $VI_1$ such that $\G(VI_1)\cong\Delta$ to obtain a graph $\G(P)'$ (see \autoref{gpprime}).

\begin{center}
\begin{figure}[h]
\centering
\begin{tikzpicture}[node distance=2cm,inner sep=0.7mm,scale=1.35]
\node (0) at (-4,0) [circle,draw,red,label=below:$v_0$] {};
\node (1) at (-3,0) [circle,draw,red,label=below:$v_1$] {};
\node (11) at (-3.25,0.25) {};
\node (111) at (-3.25,0) {};
\node (12) at (-3.25,-0.25) {};
\node (2) at (-2,0) [circle,draw,red,label=below:$v_2$] {};
\node (3) at (-1,0) [circle,draw,red,label=below:$v_{n-2}$] {};
\node (4) at (0,0) [circle,draw,red,label=below:$v_{n-1}$] {};
\node (41) at (-0.75,0.25) {};
\node (411) at (-0.75,0) {};
\node (42) at (-0.75,-0.25) {};
\node (5) at (1,0) [circle,draw,label=below:$v_n$] {};
\node (6) at (2,0) [circle,draw,label=below:$v_{n+1}$] {};
\node (7) at (3,0) [circle,draw,label=below:$v_{n+2}$] {};
\node (8) at (4,0) [circle,draw,label=below:$v_{n+3}$] {};
\node (9) at (5,0) [circle,draw,label=below:$v_{n+4}$] {};
\node (10) at (6,0) {};

\draw (0) to [out=35,in=145] (5);
\draw (1) to [out=35,in=145] (5);
\draw (2) to [out=35,in=145] (5);
\draw (3) to [out=35,in=145] (5);
\draw (4) -- (5);
\draw (0) to [out=325,in=215] (7);
\draw (1) to [out=325,in=215] (7);
\draw (2) to [out=325,in=215] (7);
\draw (3) to [out=325,in=215] (7);
\draw (4) to [out=325,in=215] (7);
\draw (5) to [out=325,in=215] (7);
\draw (6) -- (7);
\draw (0) to [out=35,in=145] (9);
\draw (1) to [out=35,in=145] (9);
\draw (2) to [out=35,in=145] (9);
\draw (3) to [out=35,in=145] (9);
\draw (4) to [out=35,in=145] (9);
\draw (5) to [out=35,in=145] (9);
\draw (6) to [out=35,in=145] (9);
\draw (7) to [out=35,in=145] (9);
\draw (8) -- (9);
\draw [thick, loosely dotted] (2) -- (3);
\draw [thick, loosely dotted] (9) -- (10);
\path [draw half paths={solid,red}{dotted,red}]  (0) -- (11);
\path [draw half paths={solid,red}{dotted,red}]  (0) -- (111);
\path [draw half paths={solid,red}{dotted,red}]  (0) -- (12);
\path [draw half paths={solid,red}{dotted,red}]  (4) -- (41);
\path [draw half paths={solid,red}{dotted,red}]  (4) -- (411);
\path [draw half paths={solid,red}{dotted,red}]  (4) -- (42);
\end{tikzpicture}
\caption{$\G(P)'$, with $\Delta$ highlighted in red}\label{gpprime}
\end{figure}
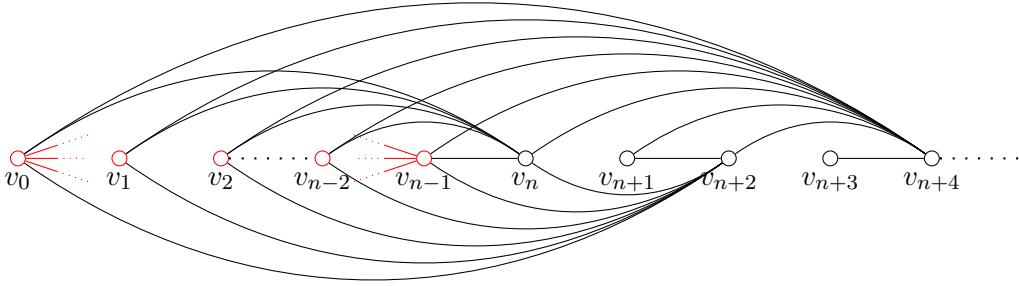
\end{center}
\vspace{-0.8cm}

We aim to show that $VO_i$ and $VI_i$ are fixed setwise for any automorphism of $\G(P)'$ through a series of claims.

\begin{claim1} If $v_a\in VO_i$ and $v_b\in VO_j$ with $i \neq j$, then $\G(N(v)) \ncong \G(N(w))$.
\end{claim1}

\noindent\emph{Proof of Claim 1.} As $|VO_k|=1$ for all $k\in\mb{N}$, we have that $VO_i = \{v_a\}$ and $VO_j = \{v_b\}$. Assume without loss of generality that $a<b$. We define the following sets:
\begin{eqnarray*}
X_a &=& \{v_k\text{ : }k<a\}\\
X_b &=& \{v_k\text{ : }k<b\}\\
Y_a &=& \{v_k\text{ : }k\geq a,\; p_k = 0\}\\
Y_b &=& \{v_k\text{ : }k\geq b,\; p_k = 0\}.
\end{eqnarray*}

\autoref{neighbourhood} (2) applies in this situation; so $N(v_a) = X_a\cup Y_a$ and $N(v_b)= X_b\cup Y_b$. Since $\G(Y_a)$ and $\G(Y_b)$ are infinite complete graphs, it follows that any maximum independent set of $N(v_a),N(v_b)$ is contained in $X_a,X_b$ respectively. Now, as $\G(VI_1)\cong \Delta$ is a 3-regular graph on more than six vertices, there exists some maximal independent set $M\subseteq VI_1$ of $\G(VI_1)$ with size greater than or equal to 2.

Now we consider the sets \[A = \{v_c\in V\G(P)'\text{ : }n-1<c<a,\; p_c = 1\}\cup M\] and \[B = \{v_c\in V\G(P)'\text{ : }n-1<c<b,\; p_c = 1\}\cup M,\] the maximum independent sets of $X_a$ and $X_b$ respectively. As $i<j$, there exists $d$ such that $a<d<b$ with $p_d=1$. Hence $v_d\in B\smallsetminus A$ and so $|B|>|A|$. Since $A,B$ are maximum independent sets of $\G(N(v_a))$ and $\G(N(v_b))$ respectively with different sizes, we conclude that $\G(N(v_a))\ncong \G(N(v_b))$. This ends the proof of Claim 1.\\

\noindent This shows that there exists no automorphism $\gamma$ of $\G(P)'$ sending any $v\in VO_i$ to $w\in VO_j$ with $i\neq j$.

\begin{claim2} There exists no automorphism sending $v\in VO_k$ to $w\in VI_m$ for all $k,m\in\mb{N}$.
\end{claim2}

\noindent\emph{Proof of Claim 2}. We split the proof into two cases; where $m=1$ and where $m\geq 2$. For the latter, $\G(N(w))\cong K^{\aleph_0}$ for any $w\in VI_m$ with $m\geq 2$ by \autoref{neighbourhoodi} (1). But as $\G(VI_1)$ is not a complete graph, we have that $\G(VO_k)$ contains a non-edge for all $k\in\mb{N}$ and so $\G(N(v))\ncong\G(N(w))$ in this case. It remains to show that there is no automorphism sending $v\in VO_k$ to $w\in VI_1$. In this case $\G(N(w))$ is the union of an infinite complete graph $K$ and $G =\G(N_{\G(VI_1)}(w))$, with every vertex of $K$ connected to every vertex of $G$. This means any non-edge of $\G(N(w))$ must be induced by $G$; as $|N_{\G(VI_1)}(w)|=3$, there are at most 3 of them for any $w\in VI_1$. However, as $X_a$ contains $VI_1$, we have that $\G(N(v))$ contains $\Delta$ as an induced subgraph. By the reasoning above, $\Delta$ has at least 6 non-edges and therefore so does $\G(N(v))$. This means that $\G(N(v))\ncong \G(N(w))$ for any $v\in VO_k$ and any $w\in VI_1$ and so we are done.\\

\noindent Here, $VO_k$ is fixed setwise for all $k\in\mb{N}$; as $|VO_k|=1$ for all such $k$ we have that they are also fixed pointwise. We have that any $VI_{k}$ sandwiched between $VO_{k-1}$ and $VO_{k}$ are the only vertices not adjacent to every vertex in $VO_k$ and adjacent to every vertex in $VO_{k+1}$. As $VO_k$ and $VO_{k+1}$ are fixed setwise, we deduce that $VI_k$ is fixed setwise (and hence pointwise) for $k\geq 2$. We conclude that $VI_1$ is fixed setwise under automorphisms of $\G(P)'$.

Finally, we show that any bijective map $\gamma:\G(P)'\to\G(P)'$ acting as an automorphism on $VI_1$ and fixing everything else is an automorphism of $\G(P)'$. As every $v\in VO_k$ for all $k$ is connected to each $u\in VI_1$ and every $w\in VI_m$ for all $m$ is independent of each $u\in VI_1$, we have that this map preserves all edges and non-edges of $\G(P)'$ and so is an automorphism of $\G(P)'$.
\end{proof}

Using this together with \autoref{xyotm}, it follows that for any finite group $U$ there exists an oligomorphic permutation monoid that has $U$ as a group of units. 

\section{Worked examples}\label{s4}

This section is devoted to determining MB-homogeneity of examples of homogeneous structures. This investigation concludes in a complete classification of countable homogeneous graphs that are also MB-homogeneous.

There are a variety of ways to demonstrate that a structure $\mc{M}$ is MB-homogeneous. Firstly, if $\mc{M}$ is a homogeneous structure where each finite partial monomorphism $h$ is also a finite partial isomorphism, then $\mc{M}$ is MB-homogeneous as we can extend $h$ to an automorphism. If $\mc{M}$ is a graph, then it suffices to show that $\mc{M}$ has properties $\tr$ and $\tf$ by \autoref{trtfmb}. Finally, recall the remarks of \autoref{mepbep}; to prove that $\mc{M}$ is MB-homogeneous it suffices to show that $\mc{M}$ has both the 1PAMEP and the 1PMEP. All of these techniques are used at points in demonstrating the following positive examples. 

\begin{example}\label{kcgraph} Let $\Gamma = K^{\aleph_0}$, the complete graph on countably many vertices; this graph is homogeneous \cite{lachlan1980countable}. The proof that $\G$ is MB-homogeneous is equivalent to saying that any injective map between finite subsets of a countable set can be extended to a permutation of the set. Another proof relies on homogeneity, and is more open to generalisation. Here, suppose that $h:A\rarr{}B$ is a monomorphism between two finite substructures of $\Gamma$. As there are no non-edges to preserve, it must preserve non-edges and so $h$ is a finite partial isomorphism. Using homogeneity of $\G$, we extend $h$ to an automorphism (and hence a bimorphism) of $\G$ and so $\G$ is MB-homogeneous. Its complement $\bar{\Gamma}$, the infinite null graph, is also MB-homogeneous by \autoref{complement}.
\end{example}

\begin{example}\label{tournament} A tournament is defined to be an oriented, loopless complete graph. By a similar argument to the complete graph in \autoref{kcgraph}, every finite partial monomorphism of a tournament is a finite partial isomorphism. So the three homogeneous tournaments $(\mb{Q},<)$, the random tournament $T$ and the local order $S(2)$ (see Cherlin \cite{cherlin1998classification}) are all MB-homogeneous.
\end{example}

\begin{example}\label{random} Let $R$ be the countable universal homogeneous undirected graph, also known as the \emph{random graph}. It follows from the extension property characteristic to the random graph (see \autoref{epinr}) that $R$ has properties $\tr$ and $\tf$ (see \autoref{deftrtf}) and so is an MB-homogeneous graph by \autoref{trtfmb}.
\end{example}

\begin{example}\label{generic} Let $D$ be the Fra\"{i}ss\'{e} limit of the class of directed graphs without loops or 2-cycles, otherwise known as the \emph{generic digraph}. It is well known that $D$ satisfies the following extension property (see \cite{agarwal2016reducts}).

\begin{quote} (DEP) For any finite and pairwise disjoint sets of vertices $U,V,W$ of $D$, there exists a vertex $x$ of $D$ such that there is an arc from $x$ to every element of $U$, an arc to $x$ from every element of $V$, and $x$ is independent of every vertex in $W$. (see \autoref{preoarpdi} for a diagram of an example.)
\end{quote}
\begin{center}
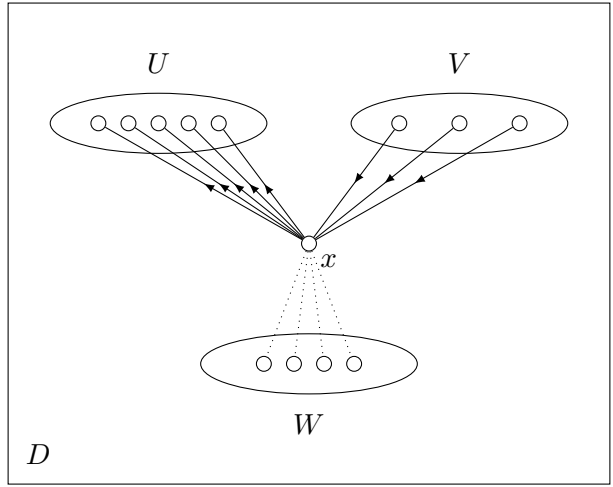
\begin{figure}[h]
\centering
\begin{tikzpicture}[node distance=1.8cm,inner sep=0.7mm,scale=0.8]
\node(R) at (-4.5,-5.5) {$D$};
\node(C) at (0,0) {};
\node(U5) at (-1.5,0) [circle,draw] {};
\node(U4) at (-2,0) [circle,draw] {};
\node(U3) at (-2.5,0) [circle,draw] {};
\node(U2) at (-3,0) [circle,draw] {};
\node(U1) at (-3.5,0) [circle,draw] {};
\node(V1) at (1.5,0) [circle,draw] {};
\node(V2) at (2.5,0) [circle,draw] {};
\node(V3) at (3.5,0) [circle,draw] {};
\node(W1) at (-0.75,-4) [circle,draw] {};
\node(W2) at (-0.25,-4) [circle,draw] {};
\node(W3) at (0.25,-4) [circle,draw] {};
\node(W4) at (0.75,-4) [circle,draw] {};
\node(Wx) at (0,-4) {};
\node(W) at (0,-5) {$W$};
\node(U) at (-2.5,1) {$U$};
\node(V) at (2.5,1) {$V$};
\node(x) at (0,-2) [circle,draw,label=below right:$x$] {};
\draw(U3) ellipse (1.8cm and 0.5cm);
\draw(V2) ellipse (1.8cm and 0.5cm);
\draw(Wx) ellipse (1.8cm and 0.5cm);
\draw (-5,-6) rectangle (5,2);
\draw[middlearrow={latex}] (x) -- (U5);
\draw[middlearrow={latex}] (x) -- (U4);
\draw[middlearrow={latex}] (x) -- (U3);
\draw[middlearrow={latex}] (x) -- (U2);
\draw[middlearrow={latex}] (x) -- (U1);
\draw[middlearrow={latex}] (V1) -- (x);
\draw[middlearrow={latex}] (V2) -- (x);
\draw[middlearrow={latex}] (V3) -- (x);
\draw[dotted] (x) -- (W1);
\draw[dotted] (x) -- (W2);
\draw[dotted] (x) -- (W3);
\draw[dotted] (x) -- (W4);
\end{tikzpicture}
\caption{Example of directed extension property in $D$}\label{preoarpdi}
\end{figure}
\end{center}
\vspace{-0.8cm}

Using this, we show that $D$ is MB-homogeneous by demonstrating that it has the 1PMEP and 1PAMEP in turn. 

Suppose that $A\subseteq B\in$ Age$(D)$ with $B\smallsetminus A=\{b\}$ and that $f:A\rarr{}D$ is a monomorphism. Decompose $A$ into three disjoint sets $b^{\rarr{}} =\{a\in A$ : $b\rarr{}a\}$, $b^{\larr{}} = \{a\in A$ : $b\larr{}a\}$ and $b^\parallel = \{a\in A$ : $b\parallel a\}$. The fact that $f$ is injective means that the sets $b^{\rarr{}}f,b^{\larr{}}f$ and $b^\parallel f$ are pairwise disjoint subsets of $VD$. Using the DEP, select a vertex $x\in VD$ such that $x$ has an arc to all elements of $b^{\rarr{}}f$, an arc from all elements of $b^{\larr{}}f$ and is independent of all elements of $b\parallel f$. Define $g:B\rarr{}D$ to be the map such that $bg = x$ and $g|_A = f$; due to our choice of $x$, this is a monomorphism. So $D$ has the 1PMEP. 

Now suppose that $X\subseteq Y\in$ Age$(D)$ with $Y\smallsetminus X = \{y\}$ and that $\bar{f}:X\to D$ is an antimonomorphism. The fact that $X$ is finite implies that im $\bar{f}$ is finite, and so there exists a vertex $w\in VD$ such that $w$ is independent of all elements in $X\bar{f}$ by the DEP. Define $\bar{g}:Y\to D$ to be the function such that $y\bar{g} = w$ and $\bar{g}|_X = \bar{f}$; thanks to our choice of $w$, this is an antimonomorphism as all non-arcs are preserved. So $D$ has the 1PAMEP and hence $D$ is MB-homogeneous by \autoref{mepbep}.
\end{example}

\begin{remark} It can be shown that the class of finite loopless digraphs with 2-cycles is a Fra\"{i}ss\'{e} class; let $D^*$ be the Fra\"{i}ss\'{e} limit of this class. Here, $D^*$ has a slightly different extension property (see Ch. 4 of \cite{mcphee2012endomorphisms}). With this, we can use a similar proof to above to show that $D^*$ is MB-homogeneous.
\end{remark}

We continue our investigation by focusing on a class of structures known as \emph{cores}; a structure $\mc{M}$ is a core if every endomorphism of $\mc{M}$ is an embedding. Every $\aleph_0$-categorical structure has a core, and is homomorphically equivalent to a model-complete core \cite{bodirsky2005core}. Cores play an important role in the theory of constraint satisfaction problems; see Bodirsky's habilitation thesis \cite{bodirsky2012complexity} for an introduction to the topic. Widely studied examples of cores include the countable dense linear order without endpoints $(\mb{Q},<)$, the complete graph on countably many vertices $K^{\aleph_0}$, and its complement $\bar{K}^{\aleph_0}$. Proving that a structure $\mc{M}$ is a core is a useful way to show that a structure is \emph{not} MB-homogeneous, as this next result shows.

\begin{lemma}\label{123lem} 
Let $\mc{M}$ be a core such that there exists a finite partial monomorphism of $\mc{M}$ that is not an isomorphism. Then $\mc{M}$ is not MH-homogeneous (and hence, not MB-homogeneous).
\end{lemma}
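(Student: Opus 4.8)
The plan is to first reduce the statement to the non-MH-homogeneity claim, since the parenthetical ``(and hence, not MB-homogeneous)'' is immediate: every bimorphism of $\mc{M}$ is in particular an endomorphism, so any finite partial monomorphism that extends to a bimorphism also extends to an endomorphism. Thus MB-homogeneity implies MH-homogeneity, and it suffices to show that $\mc{M}$ is not MH-homogeneous.

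First I would extract the precise obstruction hiding in the hypothesis. Let $f:A\rarr{}B$ be the given finite partial monomorphism of $\mc{M}$ that is not an isomorphism. Restricting the codomain to the image, $f:A\rarr{}Af$ is an injective homomorphism, hence a relation-preserving bijection onto $Af$; the only way it can fail to be an isomorphism is to fail to preserve a non-relation. So there is an $n$-ary relation $R$ and a tuple $\bar{a}$ from $A$ with $\neg R^{\mc{M}}(\bar{a})$ but $R^{\mc{M}}(\bar{a}f)$. Pinning down this tuple is the heart of the argument, as it is exactly the behaviour the core condition will rule out.

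Next I would argue by contradiction. Suppose $\mc{M}$ is MH-homogeneous; then $f$ extends to an endomorphism $\alpha$ of $\mc{M}$. Since $\mc{M}$ is a core, $\alpha$ is an embedding and therefore preserves non-relations. As $\alpha$ extends $f$ we have $\bar{a}\alpha=\bar{a}f$, so the hypothesis $\neg R^{\mc{M}}(\bar{a})$ forces $\neg R^{\mc{M}}(\bar{a}f)$, contradicting $R^{\mc{M}}(\bar{a}f)$. Hence no endomorphism extends $f$, so $f$ itself witnesses the failure of MH-homogeneity, completing the argument.

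There is no substantial technical obstacle here; the proof is essentially the interaction of three definitions. The only point needing a moment of care is the observation in the second paragraph that a monomorphism which is not an isomorphism must fail to preserve some non-relation, which follows at once from the fact that a monomorphism is automatically a relation-preserving bijection onto its image.
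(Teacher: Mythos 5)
Your proof is correct and follows the same route as the paper's: the paper's (very terse) argument is exactly that an extension to an endomorphism would be an embedding by the core property, hence would preserve non-relations, contradicting the failure of the given partial monomorphism to be an isomorphism. You have simply unpacked the details (the explicit witnessing tuple $\bar{a}$ and the reduction of MB to MH), which the paper leaves implicit.
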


\begin{proof} Let $h$ be a finite partial monomorphism of a core $\mc{M}$ that is not an isomorphism. As any endomorphism of $\mc{M}$ is an embedding, we cannot extend $h$.
\end{proof}

We can use this result to detail some homogeneous structures that are not MB-homogeneous.

\begin{example}\label{knfreegraphs} Let $G$ be the countable homogeneous $K_n$-free graph for $n\geq 3$. By results of Mudrinski in \cite{mudrinski2010notes}, it is shown that $G$ is a core for all such $n$. As there are finite partial monomorphisms of $G$ that are not isomorphisms (send any non-edge to an edge, for instance) we have that $G$ is not MB-homogeneous by \autoref{123lem}.
\end{example}

\begin{example}\label{mhhhendig}
The oriented graph analogues of the homogeneous $K_n$-free graphs are the \emph{Henson digraphs} $M_T$. These are the Fra\"{i}ss\'{e} limits of the class of all digraphs not embedding elements of some set $T$ of finite tournaments. We show that $M_T$ is a core for any non-empty $T$ containing tournaments of three or more vertices.

Suppose for a contradiction there exists a $\gamma\in$ End$(M_T)$ such that for some independent pair of vertices $v,w$ we have that $v\gamma = w\gamma$. Select a tournament $Y\in T$ with the least number of vertices, and choose $x,y\in Y$ such that $x\rarr{}y$. Create a oriented graph $Y'$ by removing the arc between $x$ and $y$, adding an extra vertex $x'$ and drawing an arc $x'\rarr{}y$; see \autoref{yfig} for an example.

\begin{figure}[h]
\centering
\begin{tikzpicture}[node distance=2cm,inner sep=0.7mm,scale=1.7]
\node (y) at (-1.5,1) {$Y$};
\node (-1a) at (-2,0.5) [circle,draw] {};
\node (1a) at (-1,0.5) [circle,draw] {};
\node (-1b) at (-2,-0.5) [circle,draw,label=below:$x$] {};
\node (1b) at (-1,-0.5) [circle,draw,label=below:$y$] {};
\draw[middlearrow={latex}] (-1a) -- (-1b);
\draw[middlearrow={latex}] (1a) -- (1b);
\draw[middlearrow={latex}] (-1b) -- (1b);
\draw[middlearrow={latex}] (-1a) -- (1a);
\draw[middlearrow={latex}] (1a) -- (-1b);
\draw[middlearrow={latex}] (1b) -- (-1a);
\node (yd) at (1.5,1) {$Y'$};
\node (2a) at (2,0.5) [circle,draw] {};
\node (-2a) at (1,0.5) [circle,draw] {};
\node (2b) at (2,-0.5) [circle,draw,label=below:$y$] {};
\node (-2b) at (1,-0.5) [circle,draw,label=below:$x$] {};
\node (2c) at (1.2,-1) [circle,draw,label=below:$x'$] {};
\draw[middlearrow={latex}] (2a) -- (2b);
\draw[middlearrow={latex}] (-2a) -- (-2b);
\draw[middlearrow={latex}] (-2a) -- (2a);
\draw[middlearrow={latex}] (2a) -- (-2b);
\draw[middlearrow={latex}] (2b) -- (-2a);
\draw[middlearrow={latex}] (2c) -- (2b);
\end{tikzpicture}
\caption{Construction of $Y'$ from $Y$ in \autoref{mhhhendig}}\label{yfig}
\end{figure}
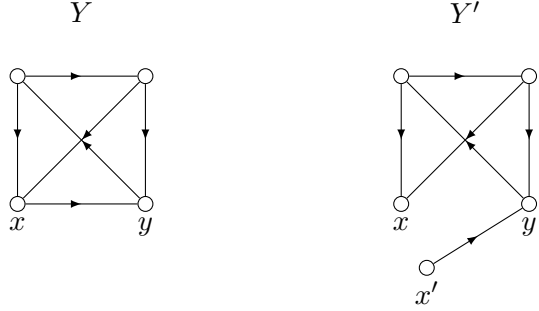

Note that there is no tournament on any $|Y|$-set of vertices of $Y'$; so $Y'\in$ Age$(M_T)$. By homogeneity of $M_T$, we find a copy $Y''$ of $Y'$ with $v,w$ in place of $x,x'$ respectively, and a vertex $u$ in place of $y$. The image of $Y''$ under $\gamma$ is a oriented graph on $|Y|$ vertices with $v\gamma\to u\gamma$, that preserves all arcs involving $v$ and $u$. It follows that $Y''\gamma \cong Y$ but this is a contradiction as $Y$ does not belong to the age of $M_T$. So $\gamma$ must be injective.

Now assume that there exists an independent pair of vertices $v,w\in M_T$ such that $v\gamma\rarr{}w\gamma$. Select a $Y\in T$ in the same fashion as before, choose two vertices $x\rarr{}y$ of $Y$ and remove this arc to obtain a oriented graph $Z$ that embeds in $M_T$. Via homogeneity of $M_T$, we find an isomorphic copy of $Z$ with $v,w$ in place of $x,y$ respectively. Hence the image of $Z$ under $\gamma$ induces a copy of $Y$ in $M_T$; a contradiction as $Y$ does not belong to the age of $M_T$. So $M_T$ is a core for any such set of tournaments $T$ and therefore $M_T$ is not MB-homogeneous by \autoref{123lem}.
\end{example}

\begin{example} Let $S(3)$ be the \emph{myopic local order}, defined as follows. Distribute $\aleph_0$ many points densely around the unit circle such that for every point $a$ there are no points $b,c$ such that $\ang{a}{b} = \ang{b}{c} = \ang{c}{a} = \frac{2\pi}{3}$. Draw an arc $a\rarr{}b$ if and only if $\ang{a}{b} < 2\pi/3$; note that this means that $S(3)$ embeds no directed 3-cycles. We show that this structure is a core.

Assume that there is an endomorphism $\gamma$ of $S(3)$ with $a\gamma\rarr{}b\gamma$ for some independent pair of points $a,b\in S(3)$. As this occurs, we have that both $\ang{a}{b}$ and $\ang{b}{a} >2\pi/3$. As $\ang{a}{b} = 2\pi - \ang{b}{a}$, it follows that $\ang{b}{a}<4\pi/3$. From this, there exists a point $c$ such that $\ang{b}{c} = \ang{c}{a} <2\pi/3$ and so $b\rarr{}c$ and $c\rarr{}a$. The endomorphism $\gamma$ then creates a directed 3-cycle (or a loop) and this is a contradiction. Now suppose that for some non-related pair $a,b\in S(3)$, there is an endomorphism $\gamma$ such that $a\gamma = b\gamma$. As before, we can find a point $c\in S(3)$ such that $b\rarr{}c$ and $c\rarr{}a$. As $\gamma$ is an endomorphism it must preserve these relations and so $b\gamma\rarr{}c\gamma$ and $c\gamma\rarr{}a\gamma = b\gamma$. Hence we have a directed 2-cycle and this is obviously false; so $S(3)$ is a core. Applying \autoref{123lem} again implies that $S(3)$ is not MB-homogeneous.
\end{example}

Finally, we note that as \autoref{mepbep} is an if and only if statement, it is enough to prove that a structure $\mc{M}$ does not have the MEP or the AMEP to show that $\mc{M}$ is not MB-homogeneous.

\begin{example}\label{infree}
Let $D_n$ be the generic $I_n$-free digraph for $n\geq 3$; this is a homogeneous digraph \cite{cherlin1998classification}. We show here that $D_n$ does not have the AMEP, and hence is not MB-homogeneous by \autoref{mepbep}
So let $T$ be some tournament on $n-1$ vertices, and let $U$ be the disjoint union of $T$ with a vertex $u$. We note that $T\subseteq U\in$ Age$(D_n)$. Let $\bar{f}:T\rarr{}D_n$ be an antimonomorphism sending $T$ to an independent set of $n-1$ vertices in $D_n$; such a substructure exists by construction. Then as antimonomorphisms preserve non-arcs, a potential image point for $u$ in $D_n$ must be independent of $T\bar{f}$; this cannot happen as then $D_n$ would induce an independent $n$-set. So $D_n$ does not have the AMEP and hence is not MB-homogeneous by \autoref{mepbep}.
\end{example}

\begin{remark}
The argument in this example can be applied to show that any countable MB-homogeneous digraph is either a tournament (see \autoref{tournament}) or contains an infinite independent set $I_{\aleph_0}$.
\end{remark}

\begin{example}\label{knfreecomplement} Let $H$ be the complement of the homogeneous $K_n$-free graph for $n\geq 3$. We note that $H$ contains $R$ as a spanning subgraph (as it is algebraically closed) and so it is MM-homogeneous as it satisfies the MEP \cite{cameron2006homomorphism}. However, as $H$ does not embed an independent $n$-set, it does not contain an infinite null graph. As $H$ is not complete (as it has a non-edge), it follows that $H$ is not MB-homogeneous by \autoref{infcompnull}.
\end{example}

Following this final example, we are able to present a complete classification of those homogeneous graphs that are also MB-homogeneous.

\begin{theorem}\label{classification} The only countably infinite homogeneous and MB-homogeneous graphs are the following:\begin{itemize}
\item $K^{\aleph_0}$ and its complement $\overline{K}^{\aleph_0}$;
\item $\bigsqcup_{i\in\mb{N}}K_i^{\aleph_0}$ and its complement $\overline{\bigsqcup_{i\in\mb{N}}K_i^{\aleph_0}}$;
\item the random graph $R$.
\end{itemize}
\end{theorem}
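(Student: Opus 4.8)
The plan is to run through the Lachlan--Woodrow classification of countably infinite homogeneous graphs \cite{lachlan1980countable} and, for each isomorphism type on that list, decide MB-homogeneity using results already established in this paper. Recall that every countably infinite homogeneous graph is isomorphic to one of: a disjoint union $\bigsqcup_{i \in I} K^{s}$ of $|I|$ complete graphs each of the same size $s$; the complement of such a union (a complete multipartite graph); a Henson graph $H_n$ (the $K_n$-free homogeneous graph) for some $n \geq 3$; the complement $\bar{H}_n$ of a Henson graph; or the random graph $R$. Since each graph on the positive side of the theorem already appears as a worked example or as a consequence of an earlier proposition, the task reduces to verifying that every remaining type fails to be MB-homogeneous.

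First I would dispose of the primitive homogeneous graphs other than $R$. The random graph itself is MB-homogeneous by \autoref{random}, and $K^{\aleph_0}$ together with $\bar{K}^{\aleph_0}$ are MB-homogeneous by \autoref{kcgraph}. The Henson graphs $H_n$ are ruled out by \autoref{knfreegraphs}, since they are cores admitting a finite partial monomorphism that is not an isomorphism; their complements $\bar{H}_n$ are ruled out by \autoref{knfreecomplement}, as a non-complete graph embedding no infinite null graph cannot be MB-homogeneous by \autoref{infcompnull}.

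The crux is the family of disjoint unions of complete graphs and their complements, and here I would avoid a type-by-type check in favour of \autoref{infcompnull}. If $\G = \bigsqcup_{i \in I} K^{s}$ is infinite, non-complete and non-null, then MB-homogeneity forces $\G$ to contain both $K^{\aleph_0}$ and $\bar{K}^{\aleph_0}$ as induced subgraphs; the former demands an infinite complete component (so $s = \aleph_0$), while the latter, being an infinite independent set, demands infinitely many components (so $|I| = \aleph_0$). Thus the only surviving candidate is $\bigsqcup_{i \in \mb{N}} K_i^{\aleph_0}$, which is indeed MB-homogeneous by \autoref{dunionkgr}(2); finite unions with more than one component are excluded by \autoref{dunionkgr}(1), and unions of finite complete graphs are excluded by their failure to contain $K^{\aleph_0}$. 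The complete multipartite case is then immediate from \autoref{complement}: since $\bar{\G}$ is MB-homogeneous if and only if $\G$ is, the complement of $\bigsqcup_{i \in \mb{N}} K_i^{\aleph_0}$ is the only MB-homogeneous complement, and every other complete multipartite graph is ruled out because its complement has already been eliminated.

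I expect the main obstacle to be organisational rather than technical: ensuring the case split against Lachlan--Woodrow is genuinely exhaustive, in particular correctly accounting for the degenerate parameter values $s = 1$ (giving $\bar{K}^{\aleph_0}$) and $|I| = 1$ (giving $K^{\aleph_0}$), which sit on the boundary between the union family and the primitive cases, and checking that each positive entry of the theorem is matched to an existing MB-homogeneity proof while each negative entry is matched to an obstruction. No new combinatorics should be required beyond \autoref{infcompnull}, \autoref{dunionkgr} and \autoref{complement}.
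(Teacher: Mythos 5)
Your proposal is correct and follows essentially the same route as the paper: both run through the Lachlan--Woodrow classification, certify the positive cases via \autoref{kcgraph}, \autoref{dunionkgr} and \autoref{random}, and eliminate the Henson graphs and their complements via \autoref{knfreegraphs} and \autoref{knfreecomplement}, with \autoref{infcompnull} and \autoref{complement} handling the remaining unions of complete graphs and the complete multipartite graphs. If anything, your write-up is slightly more explicit than the paper's (which compresses the disconnected case into a citation of \autoref{dunionkgr} and leaves the non-MB-homogeneous complete multipartite complements implicit), but the underlying argument is identical.
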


\begin{proof} We check every item in the classification of countably infinite, undirected homogeneous graphs given in \cite{lachlan1980countable}. We showed that those graphs on the list are MB-homogeneous in \autoref{kcgraph}, \autoref{dunionkgr} and \autoref{random}. Any other disconnected homogeneous graph must be the a countable union of complete graphs or a finite union of infinite complete graphs; these are not MB-homogeneous by \autoref{dunionkgr}. The only other countable homogeneous graphs are the $K_n$-free graphs and their complements; these are not MB-homogeneous by \autoref{knfreegraphs} and \autoref{knfreecomplement}.
\end{proof}

We end on some open questions concerning  MB-homogeneous graphs in general. As we have seen, it would be an arduous task to classify MB-homogeneous graphs up to isomorphism; the idea of equivalence up to bimorphisms (see \autoref{biequiv}) represents the best hope to identify all MB-homogeneous graphs. A positive answer to the following question would constitute the best classification result possible for MB-homogeneous graphs, given the amount and range of examples above.

\begin{question} Is every countable MB-homogeneous graph bimorphism equivalent to one of the five graphs in \autoref{classification}?
\end{question}

A weaker version of this question is the following:

\begin{question} Are there only countably many MB-homogeneous graphs up to bimorphism equivalence?
\end{question}

Finally, we notice that every example of an MB-homogeneous graph in this article is also HE-homogeneous. This motivates our final question:

\begin{question} Is there a countably infinite MB-homogeneous graph that is not HE-homogeneous? Conversely, is there a HE-homogeneous graph that is not MB-homogeneous?
\end{question}

\begin{acknowledgements}
The authors would like to thank the referees for their 
valuable comments which have helped to improve the article.  
The research of R.\ D. Gray was supported by the EPSRC grant EP/N033353/1 ``Special inverse monoids: subgroups, structure, geometry, rewriting systems and the word problem''.
\end{acknowledgements}

\bibliographystyle{abbrv}

%
%

\begin{thebibliography}{99}

\bibitem{agarwal2016reducts}
L.~Agarwal.
\newblock Reducts of the generic digraph.
\newblock {\em Annals of Pure and Applied Logic}, 167(3):370--391, 2016.

\bibitem{infpermgroups1998}
M.~Bhattacharjee, H.~D. Macpherson, R.~G. M\"{o}ller, and P.~M. Neumann.
\newblock {\em Notes on Infinite Permutation Groups}.
\newblock Number 1698 in Lecture Notes in Mathematics. Springer, 1998.

\bibitem{bodirsky2005core}
M.~Bodirsky.
\newblock The core of a countably categorical structure.
\newblock In {\em STACS 2005}, pages 110--120. Springer, 2005.

\bibitem{bodirsky2012complexity}
M.~Bodirsky.
\newblock Complexity classification in infinite-domain constraint satisfaction.
\newblock {\em arXiv preprint arXiv:1201.0856}, 2012.

\bibitem{bodirsky2006constraint}
M.~Bodirsky and J.~Ne{\v{s}}et{\v{r}}il.
\newblock Constraint satisfaction with countable homogeneous templates.
\newblock {\em Journal of Logic and Computation}, 16(3):359--373, 2006.

\bibitem{bodirsky2015schaefer}
M.~Bodirsky and M.~Pinsker.
\newblock Schaefer's theorem for graphs.
\newblock {\em Journal of the ACM (JACM)}, 62(3):19, 2015.

\bibitem{oligomorphic1990}
P.~J. Cameron.
\newblock {\em Oligomorphic permutation groups}, volume 152.
\newblock Cambridge University Press, 1990.

\bibitem{cameron1997random}
P.~J. Cameron.
\newblock The random graph.
\newblock In {\em The Mathematics of Paul Erd{\"o}s II}, pages 333--351.
  Springer, 1997.

\bibitem{cameron2006homomorphism}
P.~J. Cameron and J.~Ne\v{s}et\v{r}il.
\newblock Homomorphism-homogeneous relational structures.
\newblock {\em Combinatorics, Probability \& Computing}, 15(1-2):91--103, 2006.

\bibitem{cherlin1998classification}
G.~L. Cherlin.
\newblock {\em The classification of countable homogeneous directed graphs and
  countable homogeneous n-tournaments}, volume 621.
\newblock American Mathematical Soc., 1998.

\bibitem{diestel2000graph}
R.~Diestel.
\newblock {\em Graph theory}.
\newblock Springer-Verlag, 2000.

\bibitem{dolinka2014bergman}
I.~Dolinka.
\newblock The {B}ergman property for endomorphism monoids of some
  {F}ra{\"\i}ss{\'e} limits.
\newblock In {\em Forum Mathematicum}, volume~26, pages 357--376, 2014.

\bibitem{dolinka2014automorphism}
I.~Dolinka, R.~D. Gray, J.~D. McPhee, J.~D. Mitchell, and M.~Quick.
\newblock Automorphism groups of countable algebraically closed graphs and
  endomorphisms of the random graph.
\newblock {\em Math. Proc. Cambridge Philos. Soc.}, 160(3):437--462, 2016.

\bibitem{fraisse1953certaines}
R.~Fra{\"\i}ss{\'e}.
\newblock Sur certaines relations qui g\'{e}n\'{e}ralisent l'ordre des nombres
  rationnels.
\newblock {\em Comptes Rendus de l'Ac\'{a}demie des Sciences de Paris},
  237(11):540--542, 1953.

\bibitem{frucht1949graphs}
R.~Frucht.
\newblock Graphs of degree three with a given abstract group.
\newblock {\em Canadian J. Math}, 1:365--378, 1949.

\bibitem{Hartman2014}
D.~Hartman, J.~Hubi{\v{c}}ka, and D.~Ma{\v{s}}ulovi{\'c}.
\newblock Homomorphism-homogeneous {$L$}-colored graphs.
\newblock {\em European J. Combin.}, 35:313--323, 2014.

\bibitem{hodges1993model}
W.~Hodges.
\newblock {\em Model Theory}, volume~42.
\newblock Cambridge University Press, 1993.

\bibitem{howie1995fundamentals}
J.~M. Howie.
\newblock {\em Fundamentals of Semigroup Theory}.
\newblock Clarendon Press, 1995.

\bibitem{kechris2012classical}
A.~Kechris.
\newblock {\em Classical descriptive set theory}, volume 156.
\newblock Springer Science \& Business Media, 2012.

\bibitem{lachlan1980countable}
A.~H. Lachlan and R.~E. Woodrow.
\newblock Countable ultrahomogeneous undirected graphs.
\newblock {\em Transactions of the American Mathematical Society}, pages
  51--94, 1980.

\bibitem{lockettgeneric}
D.~Lockett and J.~K. Truss.
\newblock Generic endomorphisms of homogeneous structures.
\newblock {\em Groups and Model Theory, Contemporary Mathematics}, page 217,
  2012.

\bibitem{lockett2014some}
D.~C. Lockett and J.~K. Truss.
\newblock Some more notions of homomorphism--homogeneity.
\newblock {\em Discrete Mathematics}, 336:69--79, 2014.

\bibitem{macpherson2011survey}
H.~D. Macpherson.
\newblock A survey of homogeneous structures.
\newblock {\em Discrete Mathematics}, 311(15):1599--1634, 2011.

\bibitem{mavsulovic2007homomorphism}
D.~Ma{\v{s}}ulovi{\'c}.
\newblock Homomorphism-homogeneous partially ordered sets.
\newblock {\em Order}, 24(4):215--226, 2007.

\bibitem{masulovic2013}
D.~Ma{\v{s}}ulovi{\'c}.
\newblock Some classes of finite homomorphism-homogeneous point-line
  geometries.
\newblock {\em Combinatorica}, 33(5):573--590, 2013.

\bibitem{masulovic2011oligomorphic}
D.~Ma{\v{s}}ulovi{\'c} and M.~Pech.
\newblock Oligomorphic transformation monoids and homomorphism-homogeneous
  structures.
\newblock {\em Fundamenta {M}athematicae}, 212(1):17--34, 2011.

\bibitem{mcphee2012endomorphisms}
J.~D. McPhee.
\newblock {\em Endomorphisms of Fra{\"\i}ss{\'e} limits and automorphism groups
  of algebraically closed relational structures}.
\newblock PhD thesis, University of St Andrews, 2012.

\bibitem{meakin2007groups}
J.~Meakin.
\newblock Groups and semigroups: connections and contrasts.
\newblock {\em London Mathematical Society Lecture Note Series}, 340:357, 2007.

\bibitem{mudrinski2010notes}
N.~Mudrinski.
\newblock Notes on endomorphisms of {H}enson graphs and their complements.
\newblock {\em Ars Combinatoria}, 96:173--183, 2010.

\bibitem{munkres2000topology}
J.~R. Munkres.
\newblock {\em Topology}, volume~2.
\newblock Prentice Hall Upper Saddle River, 2000.

\bibitem{reyes1970local}
G.~E. Reyes.
\newblock Local definability theory.
\newblock {\em Annals of Mathematical Logic}, 1(1):95--137, 1970.

\bibitem{rusinov2010homomorphism}
M.~Rusinov and P.~Schweitzer.
\newblock Homomorphism--homogeneous graphs.
\newblock {\em Journal of Graph Theory}, 65(3):253--262, 2010.

\bibitem{schmerl1979countable}
J.~H. Schmerl.
\newblock Countable homogeneous partially ordered sets.
\newblock {\em Algebra Universalis}, 9(1):317--321, 1979.

\bibitem{steinberg2010theory}
B.~Steinberg.
\newblock A theory of transformation monoids: combinatorics and representation
  theory.
\newblock {\em The Electronic Journal of Combinatorics}, 17(1):R164, 2010.

\end{thebibliography}
\end{document}